\newtheorem{thm}{Theorem}
\newtheorem{lemma}[thm]{Lemma}
\newtheorem{prop}[thm]{Proposition}
\newtheorem{clm}[thm]{Claim}
\newtheorem{prob}[thm]{Problem}
\definecolor{darkblue}{rgb}{0,0,0.7}
\definecolor{darkgreen}{rgb}{0,0.3,0}
\definecolor{darkred}{rgb}{0.7,0,0}
\begin{document}

\title{List rainbow connection number of graphs\\[2ex]}
\author{Rongxia Tang$^\dag$ \and Henry Liu\thanks{Corresponding author}
\thanks{School of Mathematics, Sun Yat-sen University, Guangzhou 510275, China. E-mail addresses: {\tt tangrx3@mail2.sysu.edu.cn} (R.~Tang), {\tt liaozhx5@mail.sysu.edu.cn} (H.~Liu), {\tt shiyp9@mail2.sysu.edu.cn} (Y.~Shi), {\tt wangchm33@mail2.sysu.edu.cn} (C.~Wang)}
\and Yueping Shi$^\dag$ \and Chenming Wang$^\dag$
\\[2ex]
}
\date{12 March 2025}
\maketitle
\begin{abstract}
An edge-coloured path is \emph{rainbow} if all of its edges have distinct colours. Let $G$ be a connected graph. The \emph{rainbow connection number} of $G$, denoted by $rc(G)$, is the minimum number of colours in an edge-colouring of $G$ such that, any two vertices are connected by a rainbow path. The \emph{strong rainbow connection number} of $G$, denoted by $src(G)$, is the minimum number of colours in an edge-colouring of $G$ such that, any two vertices are connected by a rainbow geodesic (i.e., a path of shortest length). These two notions of connectivity of graphs were introduced by Chartrand, Johns, McKeon and Zhang in 2008. In this paper, we introduce the \emph{list rainbow connection number} $rc^\ell(G)$, and the \emph{list strong rainbow connection number} $src^\ell(G)$. These two parameters are the versions of $rc(G)$ and $src(G)$ that involve list edge-colourings. Among our results, we will determine the list rainbow connection number and list strong rainbow connection number of some specific graphs. 
We will also characterise all pairs of positive integers $a$ and $b$ such that, there exists a connected graph $G$ with $src(G)=a$ and $src^\ell(G)=b$, and similarly for the pair $rc^\ell$ and $src^\ell$. Finally, we propose the question of whether or not we have $rc(G)=rc^\ell(G)$, for all connected graphs $G$.\\

\noindent\textbf{AMS Subject Classification (2020):} 05C15\\

\noindent\textbf{Keywords:} List colouring; (strong) rainbow connection number
\end{abstract}

\section{Introduction}

In this paper, all graphs are finite, simple, and undirected. Let $K_n$ and $C_n$ denote the complete graph and cycle on $n$ vertices. Let $K_{n_1,\dots,n_t}$ denote the complete $t$-partite graph with class sizes $n_1,\dots,n_t$. In particular, $K_{m,n}$ is the complete bipartite graph with class sizes $m$ and $n$. For $t_1,t_2,\ldots\ge 0$, let $K_{t_1\times 1,t_2\times 2,\dots}$ denote the complete multipartite graph with $t_i$ classes of size $i$, for all $i\ge 1$. Any subscript with $t_i=0$ is usually omitted. In particular, $K_{t\times n}$ denotes the complete $t$-partite graph where every class has size $n$. For a graph $G$, a vertex $v\in V(G)$, and a subset $S\subset V(G)$, let $\Gamma_G(v)=\{u\in V(G):uv\in E(G)\}$ and $\Gamma_G(S)=\bigcup_{w\in S}\Gamma_G(w)$ denote the \emph{open neighbourhood} of $v$ and $S$; and let $d_G(v)=|\Gamma_G(v)|$ denote the \emph{degree} of $v$. Let $G-v$ denote the subgraph obtained by deleting $v$ and all edges at $v$ from $G$. 
For a subset of edges $E\subset E(G)$, let $G-E$ denote the subgraph $(V(G),E(G)\setminus E)$. For two vertices $x,y\in V(G)$, let $d_G(x,y)$ denote the \emph{distance} from $x$ to $y$ in $G$, and diam$(G)=\max \{d_G(x,y):x,y\in V(G)\}$ denote the \emph{diameter} of $G$. For any other undefined terms in graph theory, we refer to the book by Bollob\'as \cite{Bol98}.

Throughout, let $r\ge 1$ be a positive integer. An \emph{$r$-colouring} (or simply \emph{colouring}) of a graph $G$ is a function $c:V(G)\to [r]$, where $[r]=\{1,\dots,r\}$, and $c$ is \emph{proper} if $c(u)\neq c(v)$ for all $uv\in E(G)$. We think of $[r]$ as a set of colours, and each vertex of $G$ is given one of $r$ possible colours. The \emph{chromatic number} $\chi(G)$ of $G$ is the minimum integer $r$ such that there exists a proper $r$-colouring of $G$. A \emph{list colouring} of $G$ is a generalisation of a colouring, where each vertex of $G$ has a prescribed list of colours, and may only be assigned a colour from its own list. The notion of list colouring was introduced in the late 1970s, independently by Vizing \cite{Viz76}, and by Erd\H{o}s, Rubin and Taylor \cite{ERT80}. More precisely, an \emph{$r$-list assignment} of $G$ is a function $L:V(G)\to \mathbb N^{(\ge r)}$, where $\mathbb N^{(\ge r)}$ denotes the family of all subsets of $\mathbb N$ with at least $r$ elements. For $v\in V(G)$, a set $L(v)$ is a \emph{list}. An \emph{$L$-colouring} of $G$ is a function $c:V(G)\to \mathbb N$ such that $c(v)\in L(v)$ for all $v\in V(G)$, and $c$ is \emph{proper} if $c(u)\neq c(v)$ for all  $uv\in E(G)$. We say that $G$ is \emph{$r$-choosable} if for every $r$-list assignment $L$ of $G$, there exists an $L$-colouring of $G$ which is proper. The \emph{list chromatic number} $\chi_\ell(G)$ of $G$ is the minimum integer $r$ such that $G$ is $r$-choosable. The parameter $\chi_\ell(G)$ may also be denoted by $ch(G)$, and may be called the \emph{choice number} of $G$. Clearly, we have $\chi(G)\le\chi_\ell(G)$. Equality does not necessarily hold. For example, we have $\chi(K_{m,n})=2$. A result of Erd\H{o}s, Rubin and Taylor \cite{ERT80} states that $\chi_\ell(K_{p,p})>r$ if $p={2r-1\choose r}$. Another well-known result from the 1970s states that $\chi_\ell(K_{p,p^p})=p+1$ for $p\ge 1$.

The edge-colouring formulations are very much analogous. An \emph{$r$-edge-colouring} (or simply \emph{edge-colouring} or \emph{colouring}) of $G$ is a function $c:E(G)\to [r]$, and $c$ is \emph{proper} if $c(e)\neq c(f)$ for any two incident edges $e,f\in E(G)$. The \emph{edge-chromatic number} (or \emph{chromatic index}) $\chi'(G)$ of $G$ is the minimum integer $r$ such that there exists a proper $r$-edge-colouring. An \emph{$r$-edge-list assignment} of $G$ is a function $L:E(G)\to \mathbb N^{(\ge r)}$. For $e\in E(G)$, a set $L(e)$ is a \emph{list}. An \emph{$L$-edge-colouring} of $G$ is a function $c:E(G)\to \mathbb N$ such that $c(e)\in L(e)$ for all $e\in E(G)$, and $c$ is \emph{proper} if $c(e)\neq c(f)$ for any two incident edges  $e,f\in E(G)$. The \emph{list edge-chromatic number} $\chi_\ell'(G)$ of $G$ is the minimum integer $r$ such that, for all $r$-edge-list assignment $L$ of $G$, there exists a proper $L$-edge-colouring. We have $\chi'(G)\le\chi_\ell'(G)$. While we may possibly have $\chi(G)<\chi_\ell(G)$ for some graph $G$, the famous \emph{list colouring conjecture}, which first appeared in print in a paper of Bollob\'as and Harris \cite{BH85}, states that $\chi'(G)=\chi_\ell'(G)$ for every graph $G$. 

The \emph{rainbow connection number}, introduced by Chartrand, Johns, McKeon and Zhang \cite{CJMZ08} in 2008, is a graph colouring parameter that has recently received significant interest. A path connecting two vertices $u,v$ of a connected graph $G$ is a \emph{geodesic} if it has the shortest possible length, i.e., $d_G(u,v)$. An edge-coloured path is \emph{rainbow} if its edges have distinct colours. An edge-colouring (not necessarily proper) of a connected graph $G$ is \emph{rainbow connected} (resp.~\emph{strongly rainbow connected}) if any two vertices of $G$ are connected by a rainbow path (resp.~rainbow geodesic). The \emph{rainbow connection number} $rc(G)$ and \emph{strong rainbow connection number} $src(G)$ are, respectively, the minimum integer $r$ such that there exists a rainbow connected/strongly rainbow connected colouring of $G$, using $r$ colours. Then clearly, we have $rc(G)\le src(G)$. 


Here, we introduce list colouring versions of the rainbow connection number and strong rainbow connection number. Let $G$ be a connected graph. The \emph{list rainbow connection number} $rc^\ell(G)$ of $G$ is the minimum integer $r$ such that, for all $r$-edge-list assignment $L$ of $G$, there exists an $L$-edge-colouring $c$ of $G$ such that $c$ is rainbow connected. The \emph{list strong rainbow connection number} $src^\ell(G)$ of $G$ is the minimum integer $r$ such that, for all $r$-edge-list assignment $L$ of $G$, there exists an $L$-edge-colouring $c$ of $G$ such that $c$ is strongly rainbow connected. It is not difficult to see the following inequalities.
\begin{equation}\label{ineqs}
rc(G)\le rc^\ell(G),\quad src(G)\le src^\ell(G),\quad rc^\ell(G)\le src^\ell(G).
\end{equation}

This paper will be organised as follows. In Section \ref{gensect}, we shall prove some results about $rc^\ell(G)$ and $src^\ell(G)$ for general graphs $G$. In Section \ref{specsect}, we will determine $rc^\ell(G)$ and $src^\ell(G)$ for some specific graphs $G$, namely: cycles, wheels, complete bipartite and multipartite graphs, and the Petersen graph. In Section \ref{compsect}, we will characterise all pairs of integers $a\le b$ such that, there exists a graph with $src(G)=a$ and $src^\ell(G)=b$; and similarly for the pair $rc^\ell(G)$ and $src^\ell(G)$. In a similar direction to the list colouring conjecture, we propose the problem of whether of not we have $rc(G)=rc^\ell(G)$ for all connected graphs $G$, and we shall present some initial insights to this problem.

%
%

\section{List rainbow connection number of general graphs}\label{gensect}

In this section, we shall prove some results for the rainbow connection and strong rainbow connection parameters, for general graphs.

\begin{thm}\label{charthm}
 Let $G$ be a non-trivial connected graph on $n$ vertices.
\begin{enumerate}
\item[(a)] We have
\begin{eqnarray}
& \textup{diam}(G)\le rc(G)\le rc^\ell(G)\le n-1, &\label{ineqs2}\\
& \textup{diam}(G)\le rc(G)\le src(G)\le src^\ell(G)\le e(G). & \label{ineqs3}
\end{eqnarray}
\item[(b)] Suppose that $G$ has $p$ bridges. Then $rc(G)\ge p$.
\item[(c)] Let $v$ be a cut-vertex of $G$ such that $G-v$ has $q\ge 2$ components. Then $src(G)\ge q$.
\item[(d)] Let $n\ge 3$, and $w$ be a pendent vertex of $G$. Then $rc(G)\ge rc(G-w)$ and $src(G)\ge src(G-w)$.
\item[(e)] The following are equivalent.
\begin{enumerate}
\item[(i)] $G$ is a complete graph.
\item[(ii)] \textup{diam}$(G)=1$.
\item[(iii)] $rc(G)=1$.
\item[(iv)] $src(G)=1$.
\item[(v)] $rc^\ell(G)=1$.
\item[(vi)] $src^\ell(G)=1$.
\end{enumerate}
\item[(f)] We have $rc(G)=2$ if and only if $src(G)=2$. Moreover, $src^\ell(G)=2$ implies $rc^\ell(G)=2$, which in turn implies both $rc(G)=2$ and $src(G)=2$.
\item[(g)] The following are equivalent.
\begin{enumerate}
\item[(i)] $G$ is a tree.
\item[(ii)] $rc(G)=e(G)$.
\item[(iii)] $src(G)=e(G)$.
\item[(iv)] $rc^\ell(G)=e(G)$.
\item[(v)] $src^\ell(G)=e(G)$.
\end{enumerate}
\end{enumerate}
\end{thm}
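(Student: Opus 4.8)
The plan is to prove the chain of implications (ii)$\Rightarrow$(iv)$\Rightarrow$(v)$\Rightarrow$(iii)$\Rightarrow$(i)$\Rightarrow$(ii), together with (ii)$\Rightarrow$(iii), so that all five conditions become equivalent; in fact the cleanest route is to establish (i)$\Rightarrow$(v) directly and (ii)$\Rightarrow$(i) (equivalently, $\neg$(i)$\Rightarrow\neg$(ii)), and then use the inequalities $rc(G)\le src(G)\le src^\ell(G)\le e(G)$ and $rc(G)\le rc^\ell(G)\le e(G)$ from part~(a) to squeeze the intermediate quantities. Concretely: if $G$ is a tree, then between any two vertices $u,v$ there is a \emph{unique} $u$--$v$ path, and any rainbow connected colouring must make every such path rainbow; since every edge of a tree lies on some such path (take the two endpoints of the edge), this forces all $e(G)$ edges to receive pairwise distinct colours along each path, and a short argument shows in fact all edges must get distinct colours globally. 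Hence $rc(G)\ge e(G)$, and combined with the upper bound $src^\ell(G)\le e(G)$ and the inequality chain, all five parameters equal $e(G)$. This handles (i)$\Rightarrow$(ii),(iii),(iv),(v).

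For the reverse direction it suffices to show that if $G$ is \emph{not} a tree, then $src^\ell(G)<e(G)$ (which, via the inequality chain in~(a), gives $src(G)\le src^\ell(G)<e(G)$ and $rc^\ell(G)\le src^\ell(G)<e(G)$ and $rc(G)\le src(G)<e(G)$, so each of (ii)--(v) fails). Since $G$ is connected and not a tree, it contains a cycle, so it has an edge $e=xy$ lying on a cycle; then $G-e$ is still connected. Given any $e(G)$-edge-list assignment $L$, I would first colour the edges of a spanning structure appropriately and then argue that one can repeat a colour on $e$: intuitively, $e$ is never needed on any geodesic in an optimal sense because $x$ and $y$ remain connected in $G-e$. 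The precise statement to nail down is that $src^\ell(G)\le src^\ell(G-e)$ or, better, $src^\ell(G)\le e(G)-1$ whenever $e$ lies on a cycle — one natural approach is: colour $E(G)\setminus\{e\}$ greedily so that these $e(G)-1$ edges get distinct colours (possible since each list has size $e(G)\ge e(G)-1$, picking colours one at a time avoiding at most $e(G)-2$ previously used colours from a list of size $e(G)$), and then assign to $e$ any colour from $L(e)$. This makes all geodesics that avoid $e$ rainbow; the only worry is a geodesic $P$ that uses $e$, but then $P$ has length $d_G(x,y)=1$ (if $d_{G}(x,y)=1$ the edge $e$ itself is the geodesic, trivially rainbow) — more carefully, any shortest path between two vertices using $e$ would still be rainbow as long as $e$'s colour differs from the other edges on $P$, and since the other $\le e(G)-2$ edges on $P$ are distinctly coloured we only need to avoid those colours when choosing $c(e)$, which is possible because $|L(e)|=e(G)>e(G)-2$.

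The main obstacle is the last point: handling geodesics that genuinely pass through $e$. The quantifier ``for all geodesics between all pairs'' means one cannot simply colour $e$ ignoring the rest — but the key observation that rescues the argument is that once $E(G)\setminus\{e\}$ is rainbow (all distinct colours), every path using $e$ has at most $e(G)-1$ edges total, of which the $\le e(G)-2$ edges in $E(G)\setminus\{e\}$ already carry distinct colours, so the only constraint on $c(e)$ is to differ from those, and $|L(e)|=e(G)$ leaves at least two admissible choices. Thus a valid $L$-edge-colouring always exists, giving $src^\ell(G)\le e(G)-1$. I would also double-check the degenerate small cases ($n=1$, or $G=K_2$, where $e(G)=1$ and $G$ is a tree, consistent with $rc^\ell=src^\ell=1$), and note that for $n=2$ the theorem's hypothesis ``non-trivial connected'' still covers $K_2$; since (g) as stated has no extra hypothesis beyond part~(a)'s ``non-trivial connected graph on $n$ vertices'', everything is consistent. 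Finally I would remark that the equivalences (ii)$\Leftrightarrow$(iii) etc.\ for $rc$ and $src$ alone recover a known result of Chartrand, Johns, McKeon and Zhang~\cite{CJMZ08}, and the list versions are the new content.
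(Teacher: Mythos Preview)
Your overall structure for (g) is sound: proving (i)$\Rightarrow$(ii)--(v) via the tree argument, and then the contrapositive $\neg$(i)$\Rightarrow\neg$(v) (i.e., if $G$ is not a tree then $src^\ell(G)\le e(G)-1$), is exactly the right plan and matches the paper. The gap is in the execution of that key step. To prove $src^\ell(G)\le e(G)-1$ you must start from an arbitrary $(e(G)-1)$-edge-list assignment, not an $e(G)$-edge-list assignment; with lists of size $e(G)$ your argument only re-establishes the trivial bound $src^\ell(G)\le e(G)$ from part~(a). With the correct list size $e(G)-1$, the greedy colouring of $E(G)\setminus\{e\}$ with distinct colours still goes through, but now $L(e)$ may coincide \emph{exactly} with the $e(G)-1$ colours already used, forcing $c(e)$ to repeat some colour. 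At that point your reasoning ``we only need to avoid the $\le e(G)-2$ colours on $P$'' breaks down: that is the constraint coming from one geodesic $P$, but a \emph{single} choice of $c(e)$ must simultaneously serve every pair $(u,v)$ all of whose geodesics pass through $e$, and the union of those edge-sets can exhaust $E(G)\setminus\{e\}$.

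The paper closes this gap by splitting into two cases. If $L$ is not constant on $E(G)$, a short Hall's-theorem argument (on the bipartite incidence graph between edges and colours) produces a system of distinct representatives, so all $e(G)$ edges receive pairwise distinct colours from lists of size $e(G)-1$, and every path is rainbow. If $L$ is constant, say $L\equiv\{1,\dots,e(G)-1\}$, one chooses a \emph{shortest} cycle $C=v_0v_1\cdots v_{k-1}v_0$, assigns the same colour to the two ``opposite'' edges $v_0v_1$ and $v_{\lfloor k/2\rfloor}v_{\lfloor k/2\rfloor+1}$, and distinct colours to all other edges; the minimality of $k$ is then used to show that no geodesic of $G$ can contain both of these edges (otherwise one manufactures a strictly shorter cycle). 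Your choice of an arbitrary edge on an arbitrary cycle does not provide enough control for this last step.
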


\begin{proof}
(a) We verify the last inequality of (\ref{ineqs2}). Let $L$ be an $(n-1)$-edge-list assignment of $G$. Let $T\subset G$ be a spanning tree of $G$. We obtain an $L$-edge-colouring $c$ of $G$ where the colours $c(e)$ are distinct over $e\in E(T)$, and the colours of the edges of $E(G)\setminus E(T)$ are chosen arbitrarily. Then $c$ is rainbow connected, and thus $rc^\ell(G)\le n-1$. 
The remaining inequalities of (\ref{ineqs2}) and (\ref{ineqs3}) are clear.\\[1ex]
\indent(b) Let $c$ be an edge-colouring of $G$, using fewer than $p$ colours. Then there exist bridges $e,f\in E(G)$ such that $c(e)=c(f)$. It is not hard to see that, one end-vertex of $e$ is not connected to one end-vertex of $f$ by a rainbow path. 
Hence, $rc(G)\ge p$.\\[1ex]
\indent(c) Let $c$ be an edge-colouring of $G$, using fewer than $q$ colours. Then there exist $u,w\in V(G-v)$ such that $uv,wv\in E(G)$, $c(uv)=c(wv)$, and $u,w$ are in different components of $G-v$. There is no rainbow geodesic connecting $u$ and $w$. Hence, $src(G)\ge q$.\\[1ex]
\indent(d) Let $G$ be given an edge-colouring with fewer than $rc(G-w)$ colours. Since $|V(G-w)|\ge 2$, there exist $x,y\in V(G-w)$ which are not connected by a rainbow path in $G-w$, and also in $G$. Hence, $rc(G)\ge rc(G-w)$. The inequality $src(G)\ge src(G-w)$ is proved similarly.\\[1ex]
\indent(e) In \cite{CLLL18}, it was proved that (i) to (iv) are equivalent. Clearly we have (i) $\Rightarrow$ (vi), and by (\ref{ineqs}), we have $rc(G)\le rc^\ell(G)\le src^\ell(G)$, and hence (vi) $\Rightarrow$ (v) $\Rightarrow$ (iii).\\[1ex]
\indent(f) The equivalence $rc(G)=2\Longleftrightarrow src(G)=2$ was proved in \cite{CJMZ08}. The implications $src^\ell(G)=2\Rightarrow rc^\ell(G)=2\Rightarrow rc(G)=2$ follow from the inequalities $src^\ell(G)\ge rc^\ell(G)\ge rc(G)$ and (e).\\[1ex]
%
\indent(g) In \cite{CJMZ08}, it was proved that (i) and (ii) are equivalent. Now, since $rc(G)\le src(G)\le src^\ell(G)\le e(G)$ by (\ref{ineqs3}), we have (ii) $\Rightarrow$ (iii) $\Rightarrow$ (v). Also, since $rc(G)\le rc^\ell(G)\le src^\ell(G)\le e(G)$ by (\ref{ineqs}) and (\ref{ineqs3}), we have (ii) $\Rightarrow$ (iv) $\Rightarrow$ (v).\\
\indent Hence, it suffices to prove that (v) $\Rightarrow$ (i). Suppose that $G$ is not a tree. Let $L$ be an $(e(G)-1)$-edge-list assignment of $G$. We construct an $L$-edge-colouring of $G$ which is strongly rainbow connected, so that $src^\ell(G)<e(G)$. We may assume $|L(e)|=e(G)-1$ for all $e\in E(G)$.\\
\indent Suppose first that $L$ is not constant on $E(G)$. Consider the bipartite graph $H$ with classes $X=E(G)$ and $Y=\bigcup_{e\in E(G)}L(e)$, and connect $e\in X$ to $\alpha\in Y$ with an edge if and only if $\alpha\in L(e)$. We easily have Hall's condition for $H$ that $|\Gamma_H(S)|\ge |S|$ for all $S\subset X$. By Hall's theorem \cite{Hal35}, $H$ has a matching of size $|X|=e(G)$. Hence, there exists an $L$-edge-colouring of $G$ where all edges have distinct colours, and thus is strongly rainbow connected.


\indent Now suppose that $L$ is constant on $E(G)$, say $L\equiv\{1,2,\dots,e(G)-1\}$ on $E(G)$. Since $G$ is not a tree, we may choose a cycle $C=v_0v_1\cdots v_{k-1}v_0\subset G$ with minimum length $k\ge 3$. Let $c$ be an $L$-edge-colouring of $G$ where $c(v_0v_1)=c(v_{\lfloor k/2\rfloor}v_{\lfloor k/2\rfloor +1})=1$, and the other edges of $G$ have distinct colours from $\{2,3,\dots,e(G)-1\}$. Let $x,y\in V(G)$, and $P=u_0u_1\cdots u_\ell$ be an $x-y$ geodesic, where $u_0=x$ and $u_\ell=y$. If $P$ is not rainbow coloured, then we have $c(u_iu_{i+1})=c(u_ju_{j+1})=1$ for some $0\le i<j<\ell$. We may assume $\{u_i,u_{i+1}\}=\{v_{\lfloor k/2\rfloor},v_{\lfloor k/2\rfloor +1}\}$ and $\{u_j,u_{j+1}\}=\{v_0,v_1\}$. Consider moving along $C$ from $u_{j+1}$ in the direction away from $u_j$. Let $j+1\le r\le \ell$ be the maximum integer such that $u_{j+1}\cdots u_r\subset C$. Continuing along $C$ from $u_r$, since we must be able to reach $v_{\lfloor k/2\rfloor}$ or $v_{\lfloor k/2\rfloor +1}$, we must be able to return to $P$ for the first time, say at $u_s\in V(P)\setminus\{u_j,u_{j+1},\dots,u_r\}$. Now, the path $Q$ from $u_r$ to $u_s$ along $C$ has $e(Q)\le \lceil\frac{k-2}{2}\rceil$. Since $P$ is an $x-y$ geodesic, we have $e(Q)\ge 2$, and the path $R$ from $u_r$ to $u_s$ along $P$ has $1\le e(R)\le e(Q)\le \lceil\frac{k-2}{2}\rceil$. But now, $Q\cup R$ is a cycle in $G$ with $e(Q\cup R)\le 2\lceil\frac{k-2}{2}\rceil<k$, which contradicts the choice of $C$. We conclude that $P$ is rainbow coloured, and $c$ is a strongly rainbow connected $L$-edge-colouring of $G$.
\end{proof}

We remark that in Theorem \ref{charthm}(f), $src(G)=2$ does not imply $src^\ell(G)=2$, and $rc^\ell(G)=2$ does not imply $src^\ell(G)=2$. However, we are not certain if $rc(G)=2$ implies $rc^\ell(G)=2$. We shall discuss this in more detail in Section \ref{compsect}, where will will see that, given $2\le a\le b$, there exist connected graphs $G,G'$ such that $src(G)=a$ and $src^\ell(G)=b$; and $rc^\ell(G')=a$ and $src^\ell(G')=b$.

Next, we observe that if $G$ is a connected graph, and $H$ is a connected spanning subgraph of $G$, then $rc^\ell(G)\le rc^\ell(H)$. However, this fact does not extend to the parameter $src^\ell$.

\begin{thm}\label{spanthm}
There exist connected graphs $G$ and $H$ such that, $H$ is a spanning subgraph of $G$, and $src^\ell(G)> src^\ell(H)$.
\end{thm}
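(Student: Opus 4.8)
The plan is to exhibit a concrete pair $H \subset G$ where adding edges to $H$ actually hurts the strong list rainbow connection number. The intuition for why this is possible: $src$ and $src^\ell$ depend on \emph{geodesics}, and adding edges to a graph can shorten distances, so a rainbow path that was available in $H$ (as a geodesic) may no longer be a geodesic in $G$ and hence is disqualified. A natural testing ground is complete bipartite graphs versus their subgraphs, or a star-like graph versus a denser graph on the same vertex set. The cleanest candidates are $H = K_{1,n}$ (a star) and $G$ obtained by adding one edge — but a star is a tree, so by Theorem~\ref{charthm}(g) we'd have $src^\ell(H) = e(H)$, which is large, the wrong direction. Instead I would look for $H$ with small diameter but many edges, and $G \supset H$ with diameter $1$ fewer but with the geodesic structure forcing conflicts.

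More concretely, I would try $H = K_{2,n}$ and $G = K_{2,n}$ plus the single edge joining the two vertices in the size-$2$ class; equivalently, $G$ is the graph on $\{a,b\} \cup \{v_1,\dots,v_n\}$ with $ab$ an edge and all $av_i, bv_i$ edges. In $H$, $\mathrm{diam}(H) = 2$ and the geodesics between the $v_i$'s all pass through $a$ or $b$; a known computation gives $src^\ell(K_{2,n})$ some value, call it $s$. In $G$, the edge $ab$ creates new short geodesics, and crucially every $a$–$v_i$ and $b$–$v_i$ pair now has a unique geodesic of length $1$ (the edge itself), which is automatically rainbow — that's fine — but the $v_i$–$v_j$ pairs still need length-$2$ rainbow geodesics through $a$ or $b$, and now there are $2n+1$ edges to colour with potentially conflicting list constraints. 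The key step is to choose an adversarial $s$-edge-list assignment on $G$ (for the appropriate $s$) that cannot be completed to a strongly rainbow connected colouring, while the same-sized lists always work on $H$. I expect the right pair may in fact be even simpler: take $H = C_4$ and $G = K_4$, or $H = C_5$ and $G$ obtained by adding a chord, using the cycle and wheel computations promised in Section~\ref{specsect}; one should check which small example already separates the parameters and cite those sections.

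The verification has two halves. First, $src^\ell(H) \le s$: given any $s$-edge-list assignment of $H$, explicitly colour the edges so that every geodesic of $H$ is rainbow — this uses the structural fact that $H$'s geodesics are short and few, so a greedy/Hall-type argument (as in the proof of Theorem~\ref{charthm}(g), second case) goes through. Second, $src^\ell(G) \ge s+1$: construct a bad $s$-edge-list assignment $L$ on $G$. The adversary should assign lists so that, for some pair $v_i, v_j$, \emph{every} length-$2$ geodesic between them (there are exactly two, through $a$ and through $b$) is forced to have a repeated colour no matter how we choose from the lists — e.g. by making $L(av_i), L(av_j)$ and $L(bv_i), L(bv_j)$ heavily overlapping, and simultaneously constraining the other edges so the overlaps can't be avoided. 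Making these constraints mutually consistent across all pairs is the delicate part.

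The main obstacle I anticipate is the list-construction in the lower bound: it is easy to block one vertex pair's geodesics but the same lists must not accidentally be completable via some other route, and in $G$ there may be more geodesics than one first expects (once $ab$ is present, some $v_i$–$v_j$ "geodesics" of length $2$ and also alternative structure appear). So I would first pin down $\mathrm{diam}(G)$ and enumerate \emph{all} geodesics between \emph{all} pairs in $G$ exactly, then design $L$ against that complete list — and if $K_{2,n}$ proves unwieldy, fall back to the smallest explicit pair (likely a $4$- or $5$-vertex example) where brute-force case analysis is feasible and cite the relevant specific-graph computations from Sections~\ref{gensect}–\ref{specsect}.
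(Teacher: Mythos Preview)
Your intuition---that adding edges can destroy geodesics and therefore potentially raise $src^\ell$---is correct, but none of the concrete candidates you propose actually achieves the desired inequality, and the proposal never pins down a working example.

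For $H=C_4$, $G=K_4$: here $src^\ell(G)=1$ (complete graph, Theorem~\ref{charthm}(e)) while $src^\ell(C_4)=2$, so the inequality goes the \emph{wrong} way. For $H=K_{2,n}$, $G=K_{2,n}+ab$: the only geodesic that changes is the one between $a$ and $b$, which becomes a single edge and is trivially rainbow; every $v_i$--$v_j$ geodesic in $G$ is still a path of length $2$ through $a$ or $b$, exactly as in $H$. Hence any strongly rainbow connected $L$-edge-colouring of $H$ extends to one of $G$ by colouring $ab$ arbitrarily, giving $src^\ell(G)\le src^\ell(H)$---again the wrong direction. The $C_5$-plus-chord example has $src=2$, so it certainly will not yield $src^\ell>3=src^\ell(C_5)$. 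The phenomenon you are after is genuinely delicate: adding an edge must create a \emph{new short geodesic} that is forced to clash with existing constraints, and small graphs with diameter $1$ or $2$ do not produce this.

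The paper's proof does not search for a new example at all. It imports a specific $12$-edge graph $H$ (from Chen et al.~\cite{CLLL18}) for which it is already known that adding one particular edge gives $G$ with $src(G)\ge 5>4=src(H)$; the inequality $src^\ell(G)\ge src(G)\ge 5$ then comes for free from~(\ref{ineqs}). All the work goes into the \emph{upper} bound $src^\ell(H)\le 4$, which is established by a lengthy case analysis on an arbitrary $4$-edge-list assignment of $H$ (including two auxiliary claims). So the missing idea in your plan is twofold: first, one should look for a pair where $src(G)>src(H)$ is already known (since that immediately handles the lower bound on $src^\ell(G)$); second, the real content is proving $src^\ell(H)=src(H)$ for that specific $H$, and this requires a hands-on argument far more intricate than a Hall-type or greedy step.
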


\begin{proof}
We consider an example of Chen et al.~\cite{CLLL18}. In Figure 1(a), let $H$ be the graph consisting of the solid edges, and $G$ be the graph obtained from $H$ by adding the dashed edge. Let $e_1,\dots,e_{12}$ be the edges of $H$ as shown. In \cite{CLLL18}, it was shown that $src(H)=4$ and $src(G)\ge 5$. A strongly rainbow connected, $4$-edge-colouring of $H$ is shown in Figure 1(b).
\indent\\[-0.25cm]
\begin{figure}[htp]
\centering
\includegraphics[width=13.5cm]{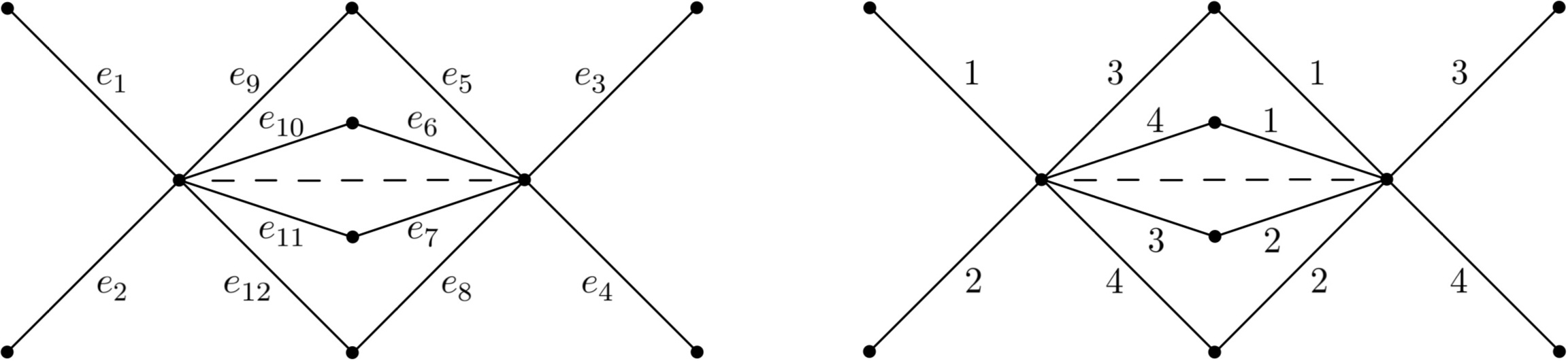}
\end{figure}
\indent\\[-0.9cm]
\begin{center}
\hspace{0.025cm}(a)\hspace{6.9cm}(b)\\[2ex]
\hspace{0.02cm}Figure 1. The graphs $G$ and $H$
\end{center}
\indent\\[-0.7cm]

\indent By (\ref{ineqs}), we have $src^\ell(G)\ge src(G)\ge 5$. We will prove that $src^\ell(H)\le 4$ (and thus $src^\ell(H)=4$ since we also have $src^\ell(H)\ge src(H)=4$). We note that the graph $H$ as drawn in Figure 1(a) is symmetric about the central vertical line. In the proof below, by the term \emph{symmetry}, we mean that we will make use of this symmetry, where the roles of the index sets $\{1,2\}$ and $\{3,4\}$ are interchanged, and similarly for $\{5,6,7,8\}$ and $\{9, 10, 11, 12\}$.

Let $L$ be a $4$-edge-list assignment of $H$. We may assume $|L(e_i)|=4$ for all $1\le i\le 12$. We construct a strongly rainbow connected $L$-edge-colouring $c$ of $H$. First, choose distinct $c(e_i)\in L(e_i)$ for $1\le i\le 4$. We may assume for now that $c(e_i)=i$ for $1\le i\le 4$, and update these later if necessary. Let $L_i'=L(e_i)\setminus\{3,4\}$ for $5\le i\le 8$, and $L_i'=L(e_i)\setminus\{1,2\}$ for $9\le i\le 12$. For $5\le i\le 12$, we have $2\le |L_i'|\le 4$. We will choose $c(e_i)\in L_i'$, where $c(e_i)$ is chosen arbitrarily if no choice is defined. 
We first have the following claim.
\begin{clm}\label{spanthmclm1}
Suppose that for some $\alpha\in\{1,2\}$, $\beta\in\{3,4\}$, and $\gamma_5,\dots,\gamma_8\not\in\{1,2,3,4\}$, where the $\gamma_j$ are not necessarily distinct, we have $L_i'=\{\alpha,\gamma_i\}$ for $5\le i\le 8$, and $L_i'=\{\beta,\gamma_{i-4}\}$ for $9\le i\le 12$. Then there exists a strongly rainbow connected $L$-edge-colouring $c$ of $H$.
\end{clm}

\begin{proof}
We construct an $L$-edge-colouring $c$ as follows. Assume that $\alpha=1$ and $\beta=3$, otherwise we may switch $1$ and $2$; or $3$ and $4$. Then $L(e_i)=\{3,4,1,\gamma_i\}$ for $5\le i\le 8$, and $L(e_i)=\{1,2,3,\gamma_{i-4}\}$ for $9\le i\le 12$. Suppose that there exists $\delta_3\in L(e_3)\setminus\{1,2,3,4\}$, say $\delta_3=5$. If $\gamma_5\neq 5$, then we update $c(e_3)=5$, and let $c(e_5)=\gamma_5$, $c(e_i)=3$ for $i=6,9,11$; $c(e_i)=1$ for $i=7,8$; and $c(e_i)=\gamma_{i-4}$ for $i=10,12$. Otherwise, it follows that $\gamma_i=5$ for $5\le i\le 8$. If there exists $\delta_2\in L(e_2)\setminus\{1,2,4,5\}$, we update $c(e_2)=\delta_2$, $c(e_3)=5$, and let $c(e_i)=1$ for $i=5,6$; $c(e_i)=3$ for $i=7,8$; $c(e_i)=2$ for $i=9,11$; and $c(e_i)=5$ for $i=10,12$. Otherwise, we have $L(e_2)=\{1,2,4,5\}$. If there exists $\delta_1\in L(e_1)\setminus\{1,2,4,5\}$, then we update $c(e_2)=1$, and repeat the previous argument to obtain $L(e_1)=\{1,2,4,5\}$. By symmetry, we may then obtain $L(e_4)=L(e_3)=\{3,2,4,5\}$. We let $c(e_i)=1$ for $i=1,5,6$; $c(e_i)=2$ for $i=4,9,11$; $c(e_i)=3$ for $i=3,10,12$; and $c(e_i)=4$ for $i=2,7,8$.

Hence, we have $L(e_3)=\{1,2,3,4\}$. By symmetry, we have $L(e_1)=\{1,2,3,4\}$. Recall that $L(e_i)=\{3,4,1,\gamma_i\}$ for $5\le i\le 8$, and $L(e_i)=\{1,2,3,\gamma_{i-4}\}$ for $9\le i\le 12$. If there exists $\delta_2\in L(e_2)\setminus\{1,2,3,4\}$, say $\delta_2=5$, we update $c(e_2)=5$, and let $c(e_i)=\gamma_i$ for $i=5,6$; $c(e_i)=1$ for $i=7,8$; $c(e_i)=2$ for $i=9,11$; and $c(e_i)=3$ for $i=10,12$. Otherwise, we have $L(e_2)=\{1,2,3,4\}$, and by symmetry, we have $L(e_4)=\{1,2,3,4\}$. We let $c(e_i)=1$ for $i=3,9,11$; $c(e_i)=2$ for $i=4,10,12$; $c(e_i)=3$ for $i=1,5,6$; and $c(e_i)=4$ for $i=2,7,8$.

In all cases, it is easy to verify that $c$ is a strongly rainbow connected colouring.
\end{proof}
Now, suppose $\big|\bigcup_{i=5}^8 L_i'\big|=\big|\bigcup_{i=9}^{12} L_i'\big|=2$. We construct an $L$-edge-colouring $c$ as follows. We have $L_i'=\{\alpha,\beta\}$ for $5\le i\le 8$, and $L_i'=\{\gamma,\delta\}$ for $9\le i\le 12$, for some $\alpha,\beta\not\in\{3,4\}$ and $\gamma,\delta\not\in\{1,2\}$. If $\alpha,\beta,\gamma,\delta$ are distinct, let $c(e_i)=\alpha$ for $i=5,6$; $c(e_i)=\beta$ for $i=7,8$; $c(e_i)=\gamma$ for $i=9,11$; and $c(e_i)=\delta$ for $i=10,12$. Now, assume that $\alpha=\gamma=5$. If $\beta\in\{1,2\}$ and $\delta\in\{3,4\}$, then we may apply Claim \ref{spanthmclm1}. Otherwise, assume that $\beta=6$. We let $c(e_i)=5$ for $i=5,6,9,11$; $c(e_i)=6$ for $i=7,8$; and $c(e_i)=\delta$ for $i=10,12$. In all cases, we have a strongly rainbow connected $L$-edge-colouring.

Hence, assume without loss of generality that $\big|\bigcup_{i=5}^8 L_i'\big|\ge 3$. Then there exist $\alpha_i\in L_i'$ such that at most two of the $\alpha_i$ are equal, for $5\le i\le 8$. Let $I=\{5\le i\le 8: \alpha_i\not\in\{1,2\}\}$. We have the following claim.

\begin{clm}\label{spanthmclm2}
If one of the following holds, then there exists a strongly rainbow connected $L$-edge-colouring of $H$.
\begin{enumerate}
\item[(i)] For some $5\le j\neq k\le 8$, we have $1\in L_j'$ and $2\in L_k'$.
\item[(ii)] We do not have $L_{i+4}'=\{\beta,\alpha_i\}$ over all $i\in I$, for some $\beta\in\{3,4\}$.
\item[(iii)] $I=\{5,6,7,8\}$.
\end{enumerate}

A similar result holds by applying the symmetry, if $\big|\bigcup_{i=9}^{12} L_i'\big|\ge 3$. Let (i$'$), (ii$'$), (iii$'$) denote the corresponding statements.
\end{clm}

\begin{proof}
Suppose that (i) holds. Assume that we may choose $c(e_5)=1\in L_5'$ and $c(e_6)=2\in L_6'$. We may assume that we can choose $c(e_7)=\gamma_7\in L_7'\setminus\{1,2\}$, otherwise we have $L_7'=L_8'=\{1,2\}$, and we may instead let $c(e_7)=1\in L_7'$,  $c(e_8)=2\in L_8'$, and either $c(e_5)\in L_5'\setminus\{1,2\}$ or $c(e_6)\in L_6'\setminus\{1,2\}$. We then let $c(e_8)=\gamma_8\in L_8'\setminus\{\gamma_7\}$, and we may assume $\gamma_8\neq 2$, but $\gamma_8=1$ is possible. Next, let $c(e_{11})\in L_{11}'\setminus\{\gamma_7\}$, and we may assume $c(e_{11})\neq 4$ (otherwise, switch $3$ and $4$). Finally, let $c(e_9)\in L_9'\setminus\{3\}$, $c(e_{10})\in L_{10}'\setminus\{3\}$, and $c(e_{12})\in L_{12}'\setminus\{c(e_9)\}$. Then, $c$ is a strongly rainbow connected $L$-edge-colouring.

Next, suppose that (ii) holds. Let $c(e_i)=\alpha_i$ for $5\le i\le 8$. If we can choose $c(e_{j+4})\in L_{j+4}'\setminus\{3,4,\alpha_j\}$ for some $j\in I$, let $c$ be such that $c(e_{p+4})\in L_{p+4}'$ and $c(e_{q+4})\in L_{q+4}'$ are distinct, if $\alpha_p=\alpha_q$. Then, $c$ is strongly rainbow connected. Otherwise, we have $L_{i+4}'\subset\{3,4,\alpha_i\}$ for all $i\in I$. Since there are at least three distinct colours among the $\alpha_i$, if (i) does not hold, then $|I|\ge 2$. Thus, $3\in L_{j+4}'$ and $4\in L_{k+4}'$ for some $j,k\in I$ with $j\neq k$. Using (i$'$), we have a strongly rainbow connected  $L$-edge-colouring.

Finally, suppose that (iii) holds. If (ii) and (ii$'$) do not hold, then $L_{i+4}'=\{\beta,\alpha_i\}$ over all $i\in I$, for some $\beta\in\{3,4\}$; and $L_i'=\{\beta',\alpha_i\}$ over all $i\in I$, for some $\beta'\in\{1,2\}$. By Claim \ref{spanthmclm1}, we have a strongly rainbow connected  $L$-edge-colouring.
\end{proof}

Now, note that the condition (i) in Claim \ref{spanthmclm2} does not depend on the $\alpha_i$. Suppose that (i) does not hold. We may assume that $\alpha_5,\dots,\alpha_8$ satisfy the following. If they are distinct, we assume $\alpha_6,\alpha_7,\alpha_8\not\in\{1,2\}$. Otherwise, we assume that $\alpha_7=\alpha_8$ is the only equality. We assume $\alpha_7,\alpha_8\not\in\{1,2\}$, otherwise we may change $\alpha_7$ to some $\alpha_7'\in L_7'\setminus\{\alpha_7\}$. We may further assume $\alpha_6\not\in\{1,2\}$. We shall apply Claim \ref{spanthmclm2} for such $\alpha_5,\dots,\alpha_8$.

If $\alpha_5\not\in\{1,2\}$, then $I=\{5,6,7,8\}$, and Claim \ref{spanthmclm2}(iii) holds. 

Now, let $\alpha_5\in\{1,2\}$, so $I=\{6,7,8\}$. Assume $\alpha_5=1$. If Claim \ref{spanthmclm2}(ii) does not hold, we may assume $L_{i+4}'=\{3,\alpha_i\}$ for $i\in I$. If $4\in L_9'$, then since $3\in L_{10}'$, Claim \ref{spanthmclm2}(i$'$) holds. Next, suppose $3\in L_9'$. If Claim \ref{spanthmclm2}(ii$'$) does not hold, then $L_i'=\{1,\alpha_i\}$ for all $i\in I$. If we can choose $c(e_5)=\beta_5\in L_5'\setminus\{1,2\}$ and $c(e_9)=\beta_9\in L_9'\setminus\{3,4\}$ such that $\beta_5\neq \beta_9$, then let $c(e_6)=\alpha_6$, $c(e_i)=1$ for $i=7,8$; $c(e_i)=3$ for $i=10,12$; and $c(e_{11})=\alpha_7$, so that $c$ is strongly rainbow connected. Otherwise, $L_5'=\{1,\beta\}$ and $L_9'=\{3,\beta\}$ for some $\beta\not\in\{1,2,3,4\}$. By Claim \ref{spanthmclm1}, there exists a strongly rainbow connected $L$-edge-colouring. 

Finally, suppose $3,4\not\in L_9'$. If there exists $\alpha_9\in L_9'$ such that, among $\alpha_6,\alpha_7,\alpha_8,\alpha_9$, at most two are equal, then Claim \ref{spanthmclm2}(iii$'$) holds by taking $\alpha_i\in L_{i+4}'$ for $5\le i\le 8$. Otherwise, we have $\alpha_7=\alpha_8$ and $L_9'=\{\alpha_6,\alpha_7\}$. If Claim \ref{spanthmclm2}(ii$'$) does not hold for $\alpha_6\in L_9'$, $3\in L_{10}'$ and $\alpha_7\in L_{11}',L_{12}'$, then $L_5'=\{1,\alpha_6\}$ and $L_7'=L_8'=\{1,\alpha_7\}$. Let $c(e_i)=\alpha_6\in L_i'$ for $i=5,6$; $c(e_i)=1\in L_i'$ for $i=7,8$; $c(e_i)=\alpha_7\in L_i'$ for $i=9,11$; and $c(e_i)=3\in L_i'$ for $i=10,12$. Then, $c$ is strongly rainbow connected.

\indent In all cases, we have obtained a strongly rainbow connected $L$-edge-colouring of $H$. This completes the proof of Theorem \ref{spanthm}.
\end{proof}

To conclude this section, we have the following result about graphs with a universal vertex. This result will be useful for the rest of the paper.

\begin{thm}\label{univthm}
Let $G$ be a non-complete graph with a universal vertex $v$. Suppose that $G-v$ has $q$ components $H_1,\dots,H_q$, where $p$ of the components are trivial.
\begin{enumerate}
\item[(a)] For $q\ge 3$, we have $rc(G)=rc^\ell(G)=\max(p,3)$.
\item[(b)] For $q=1,2$, we have $2\le rc(G)\le rc^\ell(G)\le 3$. 
\end{enumerate}
\end{thm}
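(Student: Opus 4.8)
The plan is to establish the upper bound $rc^\ell(G)\le\max(p,3)$ in case (a), and $rc^\ell(G)\le 3$ in case (b), by explicitly constructing a rainbow connected $L$-edge-colouring for an arbitrary $r$-edge-list assignment $L$ with $r=\max(p,3)$ (resp.~$r=3$); the matching lower bounds follow from results already in hand. For (a), the lower bound $rc(G)\ge 3$ holds because $G$ is non-complete with $\mathrm{diam}(G)=2$, so $rc(G)\ge\mathrm{diam}(G)=2$, and $rc(G)=2$ would force (by Theorem~\ref{charthm}(e),(f) and the structure of $G$) a contradiction; and $rc(G)\ge p$ follows from Theorem~\ref{charthm}(b), since each of the $p$ trivial components of $G-v$ yields a pendant vertex of $G$, hence a bridge (the unique edge joining it to $v$), and these $p$ bridges are pairwise non-adjacent when $p\ge 3$... actually more carefully one checks that $p$ colours are needed among these pendant edges. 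For (b), $rc(G)\ge 2$ is immediate since $G$ is non-complete, and $rc^\ell(G)\le src^\ell(G)\le e(G)$; the content is the bound $rc^\ell(G)\le 3$.

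For the upper bound in (a), let $v$ be the universal vertex, let $u_1,\dots,u_p$ be the vertices of the trivial components, and for each non-trivial component $H_j$ pick a spanning tree $T_j$. Every vertex is within distance~$2$ of every other via $v$, so it suffices that (i) for each pendant vertex $u_i$, the edge $vu_i$ receives a colour distinct from that of $vu_{i'}$ for all $i'\ne i$ (so $u_i$--$u_{i'}$ paths of length~$2$ are rainbow), and (ii) within each $H_j$, together with the edges $\{vx:x\in V(H_j)\}$, the colouring is rich enough that any two vertices of $G$ are joined by a rainbow path — here paths of length~$\le 3$ suffice, routing through $v$. The clean way: colour the $p$ pendant edges $vu_1,\dots,vu_p$ with $p$ distinct colours drawn greedily from their lists (possible since each list has size $\ge p$), then for each non-trivial $H_j$ colour the edges at $v$ into $H_j$ and the tree edges of $T_j$ so that for any $x\in V(H_j)$ the path $x$--$v$ (an edge) plus a short route is rainbow, and any two $x,y$ in different $H_j, H_k$ are joined by the rainbow $2$-path $x\,v\,y$ — which only requires $c(vx)\ne c(vy)$. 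So the real constraint is to choose, across all $q$ components, the colours $c(vx)$ so that they are "sufficiently distinct", while also handling same-component pairs. Since each list has $\ge 3$ colours and $q\ge 3$, one shows by a greedy/Hall-type argument that the edges at $v$ can be coloured so that for each pair of components at least the needed distinctness holds, and then tree edges inside each $H_j$ are coloured to rainbow-connect $x,v$.

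For (b), with $q\in\{1,2\}$ and a $3$-edge-list assignment $L$: here $p\le q\le 2<3$, so there is slack. When $q=1$, take a spanning tree $T$ of $H_1=G-v$; every pair $x,y\in V(G)$ lies on a path of length $\le 3$ through $v$ or along $T$, and with $3$ colours available per edge one can colour so that each such short path is rainbow — concretely, colour $vx$ for $x\in V(H_1)$ and the edges of $T$ greedily so that adjacent edges on the relevant short paths differ; since the relevant obstructions are local (paths of length $2$ and $3$) and lists have size $3$, a careful greedy ordering succeeds. When $q=2$, additionally ensure $c(vx)\ne c(vy)$ whenever $x\in H_1$, $y\in H_2$; since this is a proper-colouring-type condition on a bipartite-like auxiliary structure and lists have size $3$, it goes through. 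In all cases, any remaining edges of $G$ (chords not in the trees) are coloured arbitrarily from their lists.

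\textbf{Main obstacle.} The delicate point in (a) is coordinating the colours of the $q$ stars at $v$ together with the tree edges inside the non-trivial components, so that \emph{every} pair of vertices — same component or different — is rainbow-connected, using only $\max(p,3)$ colours from arbitrary lists; in particular one must rule out the pathological list assignments (analogous to the $K_{p,p^p}$-type obstructions mentioned in the introduction) where a naive greedy choice fails, and verify that the extra colour beyond $p$ (the "$3$") provides exactly the room needed. I expect the argument to split into subcases according to how many components are non-trivial and whether $p\le 3$ or $p>3$, with a short case analysis — mirroring the style of the proofs of Theorems~\ref{charthm} and~\ref{spanthm} — rather than a single slick construction; the case $p>3$ is essentially forced (the $p$ pendant edges must get $p$ distinct colours and everything else fits), while the case $p\le 3$ needs the genuine choosability reasoning.
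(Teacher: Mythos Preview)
Your overall architecture (upper bound by construction, lower bounds via bridges and a pigeonhole argument on the star at $v$) matches the paper, but the upper-bound construction has a genuine gap. You write that for $x\in H_j$, $y\in H_k$ in different components ``the rainbow $2$-path $xvy$ \dots\ only requires $c(vx)\ne c(vy)$'', and then aim to make the colours $c(vx)$ ``sufficiently distinct'' across components by a greedy/Hall-type argument. This cannot work: with $r=3$ and components of arbitrary size, no list colouring of the star at $v$ can make $c(vx)\ne c(vy)$ for all cross-component pairs --- indeed, this is exactly the $K_{p,p^p}$-type obstruction you mention, and it is not a pathology to be ruled out but an unavoidable feature.

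The paper's construction avoids this entirely. It does \emph{not} try to separate the star colours across components. Instead, within each non-trivial component $H_j$ it fixes a spanning tree, orders the tree edges so that each prefix is a subtree, and greedily colours so that for every tree edge $zz'$ the three colours $c(vz),c(vz'),c(zz')$ are pairwise distinct (this needs only lists of size $3$, since at each step one new vertex and one new tree edge are added and each must avoid at most two previously fixed colours). Now if $xvy$ fails to be rainbow, take a tree-neighbour $z$ of $y$ in $H_j$: since $c(vy),c(vz),c(zy)$ are distinct and $c(vx)=c(vy)$, the path $xvzy$ is rainbow. Pendant--pendant pairs are handled separately by giving the $p$ pendant edges distinct colours. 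This single construction covers all $q\ge 1$ with no case split on $p\le 3$ versus $p>3$; your anticipated subcase analysis is unnecessary once you have the right local invariant.

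For the lower bound $rc(G)\ge 3$ when $q\ge 3$, your reference to Theorem~\ref{charthm}(e),(f) is too vague; the clean argument is direct: with two colours, by pigeonhole some $x\in H_i$, $y\in H_j$ ($i\ne j$) have $c(xv)=c(yv)$, and since $v$ is their unique common neighbour and any $x$--$y$ path has length $\ge 2$, no rainbow $x$--$y$ path exists.
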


\begin{proof}
Assume that $H_1,\dots,H_p$ are precisely the trivial components of $G-v$, say $V(H_i)=\{w_i\}$ for $1\le i\le p$. Let $r=\max(p,3)$, and $L$ be an $r$-edge-list assignment of $G$. We obtain an $L$-edge-colouring $f$ of $G$. Let $f(w_iv)\in L(w_iv)$ be distinct for $1\le i\le p$. For $p<i\le q$, choose a spanning tree $T\subset H_i$, and an ordering $e_1,\dots,e_t$ of the edges of $T$ such that for all $1\le k\le t$, the graph induced by $e_1,\dots,e_k$ is a subtree of $T$. Let $e_1=u_0u_1$, and for $2\le k\le t$, let $e_k=u_{\ell_k}u_k$, where $0\le \ell_k<k$, and $u_k$ is not a vertex of any of $e_1,\dots,e_{k-1}$. Now, let $f(u_0v)\in L(u_0v)$, $f(u_1v)\in L(u_1v)$, $f(e_1)\in L(e_1)$ be distinct. For $2\le k\le t$, suppose that $f(u_hv)\in L(u_hv)$ and $f(e_h)\in L(e_h)$ have been chosen such that $f(u_{\ell_h}v)$, $f(u_hv)$, $f(e_h)$ are distinct, for all $1\le h< k$. Let $f(u_kv)\in L(u_kv)\setminus\{f(u_{\ell_k}v)\}$ and $f(e_k)\in L(e_k)\setminus\{f(u_{\ell_k}v)\}$ be distinct. Repeat inductively, and apply this procedure for all such $H_i$. For all remaining edges of $H$, choose arbitrary colours from their lists.

Now, for non-adjacent $x\in V(H_i)$ and $y\in V(H_j)$, where $1\le i\le j\le q$, we have either $xvy$ or $xvzy$ is a rainbow path, for some $z\in V(H_j)\setminus\{x,y\}$. Hence, $f$ is a rainbow connected colouring of $G$, and $rc^\ell(G)\le r=\max(p,3)$.

By Theorem \ref{charthm}(e) and (\ref{ineqs}), if $q=1,2$, then the inequalities $2\le rc(G)\le rc^\ell(G)\le 3$ of (b) follow. Now, let $q\ge 3$. Since the bridges of $G$ are precisely $w_1v,\dots,w_pv$, we have $rc(G)\ge p$ by Theorem \ref{charthm}(b). Also, suppose that $g$ is a $2$-edge-colouring of $G$. Then, there exist $x\in V(H_i)$ and $y\in V(H_j)$, for some $1\le i<j\le q$, such that $g(xv)=g(yv)$. There is no rainbow $x-y$ path in $G$ under $g$. Thus, $rc(G)\ge 3$, and $rc(G)\ge \max(p,3)$. Hence,  $rc(G)=rc^\ell(G)=\max(p,3)$ by (\ref{ineqs}), and (a) holds.
\end{proof}

\section{List rainbow connection number of specific graphs}\label{specsect}

In \cite{CJMZ08}, Chartrand et al.~proved many results about the parameters $rc(G)$ and $src(G)$ when $G$ is some specific graph. In this section, we shall prove analogous results for the parameters $rc^\ell(G)$ and $src^\ell(G)$.

We first gather some known results. The list chromatic numbers of the even cycle $C_{2p}$ and the complete $t$-partite graph $K_{t\times 2}$ were determined by Erd\H{o}s, Rubin and Taylor \cite{ERT80}.

\begin{thm}\label{ERTthm}\textup{\cite{ERT80}}
\begin{enumerate}
\item[(a)] For $p \ge 2$, we have $\chi_\ell(C_{2p}) = 2$.
\item[(b)] For $t \ge 2$, we have $\chi_\ell(K_{t\times 2}) = t$.
\end{enumerate}
\end{thm}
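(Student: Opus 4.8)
The statement to prove is Theorem~\ref{ERTthm}, which is attributed to Erd\H{o}s, Rubin and Taylor and states that $\chi_\ell(C_{2p}) = 2$ for $p \ge 2$, and $\chi_\ell(K_{t\times 2}) = t$ for $t \ge 2$. Since this is a cited classical result, the natural plan is to reconstruct the standard short proofs rather than invent anything new.

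\medskip

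\noindent\textbf{Part (a): even cycles.} The lower bound $\chi_\ell(C_{2p}) \ge \chi(C_{2p}) = 2$ is immediate from $\chi(G) \le \chi_\ell(G)$ and the fact that a cycle is not edgeless. For the upper bound, I would take an arbitrary $2$-list assignment $L$ on the vertices $v_1, v_2, \ldots, v_{2p}$ of $C_{2p}$ and produce a proper $L$-colouring by a greedy argument around the cycle. The key case split is whether all lists are equal or not. If $L(v_1) = L(v_2) = \cdots = L(v_{2p}) = \{a,b\}$, then the proper $2$-colouring alternating $a,b,a,b,\ldots$ works precisely because the cycle has even length. If two consecutive lists differ, say $L(v_1) \ne L(v_2)$, then pick $c(v_1) \in L(v_1)$, and colour greedily $v_2, v_3, \ldots, v_{2p}$ in order, each time choosing a colour in the current list avoiding the already-coloured predecessor; this is always possible since each list has size $2$. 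The only risk is a conflict between $v_{2p}$ and $v_1$ at the end, but because $L(v_1) \ne L(v_2)$, I can first choose $c(v_1) \in L(v_1) \setminus L(v_2)$... actually the cleaner route: choose $c(v_2) \in L(v_2)$ freely, colour $v_3, \ldots, v_{2p}, v_1$ greedily along the path, and when we reach $v_1$ we need to avoid both $c(v_{2p})$ and $c(v_2)$; if $|L(v_1)| = 2$ this fails only if $L(v_1) = \{c(v_{2p}), c(v_2)\}$. To avoid this, reorder the greedy traversal so that the ``wrap-around'' vertex is one whose two neighbours' colours we can steer; since $L(v_1) \ne L(v_2)$ there is a colour in $L(v_1)$ not in $L(v_2)$, and committing to it first removes the constraint. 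I would write this out carefully, as the bookkeeping of which vertex to finish on is the one slightly delicate point.

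\medskip

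\noindent\textbf{Part (b): $K_{t\times 2}$.} The lower bound is the interesting half. I would show $\chi_\ell(K_{t\times 2}) \ge t$ by exhibiting a bad $(t-1)$-list assignment. Label the $t$ classes $A_1, \ldots, A_t$ with $A_i = \{x_i, y_i\}$. Assign lists using $(t-1)$-subsets of $[2t-2]$ (or another suitable ground set) chosen so that in any $L$-colouring, some class must ``use up'' a colour forced on a vertex of another class; the classical construction is to take, for each class $A_i$, a partition-like pair of lists, and use a counting/pigeonhole argument: a proper colouring of $K_{t\times 2}$ uses a set of colours that hits every class, and one checks that the lists can be chosen so that no system of representatives avoiding monochromatic classes exists. (One concrete version: use as ground set the $t$ ``slots'' and arrange that choosing from one list forbids a value needed elsewhere, forcing a cycle of dependencies.) The upper bound $\chi_\ell(K_{t\times 2}) \le t$ follows from a Hall-type / greedy argument: given a $t$-list assignment, colour the classes one at a time; when colouring class $A_i = \{x_i, y_i\}$, the colours forbidden on $x_i$ (resp.\ $y_i$) are those used in the previously coloured classes, but crucially $x_i$ and $y_i$ are non-adjacent, so we only need $c(x_i) \ne c(y_i)$ and both avoiding the at most ... this needs care since all earlier classes are adjacent to both. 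A cleaner approach: each list has size $t$; process classes in any order and greedily pick $c(x_i), c(y_i)$ distinct from each other and from all $2(i-1)$ earlier colours — this does not obviously work for large $i$. Instead I would invoke the known structure: $\chi_\ell(K_{t\times 2}) = t$ is equivalent to $K_{t\times 2}$ being $t$-choosable, which follows since $K_{t \times 2}$ has a proper colouring structure making it ``degree-choosable''-adjacent, or simply cite that this is Theorem~$\ast$ of~\cite{ERT80} and reproduce their argument verbatim.

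\medskip

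\noindent\textbf{Main obstacle.} The hardest part is the lower bound in (b): constructing the explicit $(t-1)$-list assignment on $K_{t\times 2}$ with no proper $L$-colouring, and verifying it genuinely fails. The even-cycle part and the upper bound in (b) are routine greedy/Hall arguments; the $C_{2p}$ lower bound is trivial. I expect the write-up to spend most of its length on the list construction for $K_{t\times 2}$ and the pigeonhole argument showing it admits no proper $L$-colouring, and since the result is classical I would ultimately present the Erd\H{o}s--Rubin--Taylor construction rather than seek a new one.
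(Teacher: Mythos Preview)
The paper does not give its own proof of Theorem~\ref{ERTthm}; it is quoted from \cite{ERT80} and used as a black box throughout, so there is nothing in the paper to compare your attempt against.

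That said, your proposal for part~(b) has the difficulty inverted. The lower bound $\chi_\ell(K_{t\times 2})\ge t$ is trivial: it follows from $\chi_\ell\ge\chi$ and $\chi(K_{t\times 2})=t$, since one vertex from each class already induces a $K_t$. There is no need to construct a bad $(t-1)$-list assignment, and your sketch of one is vague precisely because no such construction is required. The nontrivial direction is the \emph{upper} bound $\chi_\ell(K_{t\times 2})\le t$, where your greedy attempts correctly stall. The standard argument runs by induction on $t$: if some colour $\alpha$ lies in $L(x_i)\cap L(y_i)$ for some class $\{x_i,y_i\}$, colour both vertices $\alpha$, delete $\alpha$ from all other lists, and apply induction to the remaining $K_{(t-1)\times 2}$ with lists of size at least $t-1$. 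Otherwise $L(x_i)\cap L(y_i)=\emptyset$ for every $i$, and a proper $L$-colouring is then exactly a system of distinct representatives for the $2t$ lists; Hall's condition holds because any set of at most $t$ lists has union of size at least $t$, while any set of more than $t$ of the $2t$ vertices must contain both members of some class, whose disjoint lists together cover $2t$ colours. Your outline for part~(a) is essentially fine once you commit to one version of the greedy wrap-around.
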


The list chromatic number of $K_{t_1\times 1,t_3\times 3}$ was determined by Ohba \cite{Ohb04}. The case for $K_{t\times 3}$ was determined by Kierstead \cite{Kie00}.

\begin{thm}\label{KieOhbthm}\textup{\cite{Kie00,Ohb04}}
We have $\chi_\ell(K_{t_1\times 1,t_3\times 3})=\max(t,\lceil\frac{n+t-1}{3}\rceil)$, where $t=t_1+t_3$ and $n=t_1+3t_3$. In particular, $\chi_\ell(K_{t\times 3}) = \lceil\frac{4t-1}{3}\rceil$.
\end{thm}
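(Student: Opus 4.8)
Write $G=K_{t_1\times 1,t_3\times 3}$, $t=t_1+t_3$, $n=t_1+3t_3$, and $m=\max(t,\lceil\frac{n+t-1}{3}\rceil)$. The plan is to prove the two inequalities $\chi_\ell(G)\ge m$ and $\chi_\ell(G)\le m$ separately; the ``in particular'' claim then follows immediately, since for $t_1=0$, $t_3=t$ we have $n=3t$ and $\lceil\frac{4t-1}{3}\rceil\ge t$ for all $t\ge 1$, so $m=\lceil\frac{4t-1}{3}\rceil$. Throughout I would use the standard reformulation for a complete multipartite graph with parts $V_1,\dots,V_t$: a list assignment $L$ admits a proper $L$-colouring if and only if there exist pairwise disjoint colour sets $C_1,\dots,C_t$ with $C_i\cap L(v)\neq\emptyset$ for every $v\in V_i$ (vertices inside a part are non-adjacent, so each just needs one available colour from its part's reserved set). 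For a singleton part one may take $|C_i|=1$; for a triple part $\{y,y',y''\}$ a smallest valid $C_i$ is a transversal of $\{L(y),L(y'),L(y'')\}$, of size $1$ if the three lists share a common colour and of size $2$ or $3$ otherwise.

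\emph{Lower bound.} The bound $\chi_\ell(G)\ge\chi(G)=t$ is immediate, so it remains, in the range $n>2t+1$ (equivalently $\lceil\frac{n+t-1}{3}\rceil>t$), to exhibit an $r$-list assignment with no proper $L$-colouring, where $r=\lceil\frac{n+t-1}{3}\rceil-1$. I would build this in the spirit of Erd\H{o}s--Rubin--Taylor, organised as a recursion: from a bad $r$-list assignment of $K_{t_1\times 1,t_3\times 3}$, produce a bad $(r+4)$-list assignment of $K_{t_1\times 1,(t_3+3)\times 3}$ (three new triple parts give $n\mapsto n+9$, $t\mapsto t+3$, and indeed $\lceil\frac{(n+9)+(t+3)-1}{3}\rceil-1=r+4$), starting from a few small base cases such as $K_{2\times 3}=K_{3,3}$, which is not $2$-choosable. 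The three new parts would be equipped with lists over four fresh colours in a blocking pattern, so that however the old parts get coloured and whichever of the four new colours remain unused, some new vertex's list is exhausted; singleton parts are folded into the same scheme. The bookkeeping is the delicate point: one must check that the amplification still works when colours may be shared between old and new parts, and that the count lands on exactly $\lceil\frac{n+t-1}{3}\rceil-1$ and not one less.

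\emph{Upper bound.} Given an $m$-list assignment $L$, I would induct on $t$, each step colouring one part, deleting the colours it uses from all remaining lists, and applying the inductive hypothesis to the smaller complete multipartite graph. The clean case is a triple part whose three lists share a colour $\alpha$: colour the whole part $\alpha$ and pass to $K_{t_1\times 1,(t_3-1)\times 3}$; its parameters $(n-3,t-1)$ satisfy $m-1\ge\max(t-1,\lceil\frac{(n-3)+(t-1)-1}{3}\rceil)$, so induction applies. A singleton part is absorbed similarly, provided this does not violate the $\lceil\cdot\rceil$ bound. The real content of the theorem lies in the remaining cases, and I expect two obstacles. First, a triple part whose three lists have empty common intersection cannot be handled by the easy move: one must locate either a colour appearing in one of its three lists but in no list outside the part (allowing two colours to be deleted) or an alternating rerouting of colours between parts, and proving that one of these is always possible calls for a global argument --- a Hall/deficiency condition on a suitable auxiliary bipartite graph between parts and colours, or an extremal ``kernel'' reduction. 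Second, in the Ohba regime $n\le 2t+1$ (where $m=t$, so $\chi_\ell=\chi$ is forced) the spare capacity is minimal, and one needs the subtler reductions familiar from proofs of Ohba's conjecture for small part sizes, such as colouring two parts simultaneously or merging a singleton with a part that omits one of its colours. I expect essentially all the difficulty of the proof to be concentrated in these two points.
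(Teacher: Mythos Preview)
The paper does not prove this theorem at all: it is quoted as a known result, with the upper bound attributed to Ohba \cite{Ohb04} and the special case $K_{t\times 3}$ to Kierstead \cite{Kie00}, and is then used as a black box (in the proof of Theorem~\ref{srclwheelthm}). So there is no ``paper's own proof'' to compare against.

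As for your proposal on its own merits, it is an outline rather than a proof, and you yourself flag the real gaps. The lower-bound sketch is in the right spirit --- Kierstead's construction for $K_{t\times 3}$ does proceed by a recursive blocking scheme of roughly this type --- but you have not actually specified the four-colour pattern on the three new triple parts nor verified that \emph{every} choice of colours for the old parts leaves some new list exhausted; this is where the argument lives, and ``blocking pattern'' is not a proof. For the upper bound you correctly identify that the whole difficulty lies in handling a triple part with empty common intersection when there is no slack, and you explicitly do not resolve it: phrases like ``one must locate \dots\ and proving that one of these is always possible calls for a global argument'' and ``one needs the subtler reductions familiar from proofs of Ohba's conjecture'' are descriptions of what a proof would have to do, not a proof. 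Kierstead's actual argument for $K_{t\times 3}$ is a careful induction with a specific case analysis on how many colours are shared between lists within a part and across parts; Ohba's extension to allow singleton parts adds further case work. Your plan is compatible with those proofs but stops well short of carrying them out.
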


Next, we have the \emph{combinatorial nullestellensatz} of Alon \cite{Alo99}, as follows.

\begin{thm}[Alon's combinatorial nullstellensatz \cite{Alo99}]\label{Alonnull}
Let $\mathbb F$ be a field, and $f(x_1,\dots,x_n)$ be a polynomial in $\mathbb F[x_1,\dots,x_n]$. Suppose the degree $\deg(f)$ of $f$ is $t_1+\cdots+t_n$, where each $t_i$ is a non-negative integer, and suppose the coefficient of $x_1^{t_1}\cdots x_n^{t_n}$ in $f$ is non-zero. Then, if $S_1,\dots,S_n$ are subsets of $\mathbb F$ with $|S_i|>t_i$, there exist $s_1\in S_1,\dots,s_n\in S_n$ so that
\[
f(s_1,\dots,s_n)\neq 0.
\]
\end{thm}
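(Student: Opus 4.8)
The final statement to prove is Alon's combinatorial nullstellensatz (Theorem~\ref{Alonnull}). This is a classical result, so my plan is to reproduce Alon's original argument, which proceeds by induction on the degree of $f$.

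The plan is to argue by contradiction: suppose $f(s_1,\dots,s_n)=0$ for every choice of $s_i\in S_i$. We may assume $|S_i|=t_i+1$ exactly, by shrinking the sets if necessary. The first key step is a reduction of degrees. For each $i$, let $g_i(x_i)=\prod_{s\in S_i}(x_i-s)$, a monic polynomial of degree $t_i+1$ that vanishes on all of $S_i$. Whenever $x_i=s_i\in S_i$ we have the relation $x_i^{t_i+1}=x_i^{t_i+1}-g_i(x_i)$, and the right-hand side has degree at most $t_i$ in $x_i$. Using these substitutions repeatedly, I would replace $f$ by a polynomial $\bar f$ with $\deg_{x_i}(\bar f)\le t_i$ for every $i$, such that $f$ and $\bar f$ agree on $S_1\times\cdots\times S_n$. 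The crucial observation is that the monomial $x_1^{t_1}\cdots x_n^{t_n}$ is untouched by these reductions (each substitution strictly lowers the degree in the variable it acts on, so it cannot create or destroy this top monomial), hence the coefficient of $x_1^{t_1}\cdots x_n^{t_n}$ in $\bar f$ is still nonzero.

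The second key step is to show that a polynomial $\bar f$ with $\deg_{x_i}(\bar f)\le t_i$ for all $i$, which vanishes on the entire grid $S_1\times\cdots\times S_n$ with $|S_i|=t_i+1$, must be identically zero. This is a standard multivariate interpolation / Combinatorial fact proved by induction on $n$: fixing $x_2,\dots,x_n$ at grid values, $\bar f$ becomes a univariate polynomial in $x_1$ of degree $\le t_1$ with $t_1+1$ roots, so it is the zero polynomial, meaning every coefficient (as a polynomial in $x_2,\dots,x_n$) vanishes on $S_2\times\cdots\times S_n$; apply the induction hypothesis to each coefficient. But $\bar f\equiv 0$ contradicts the fact that its coefficient of $x_1^{t_1}\cdots x_n^{t_n}$ is nonzero. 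This contradiction completes the proof.

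The main obstacle — really the only subtle point — is making the degree-reduction step of the first paragraph precise: one must verify carefully that the replacement procedure terminates, that it does not alter the values of $f$ on the grid, and that it preserves the coefficient of the distinguished top monomial $x_1^{t_1}\cdots x_n^{t_n}$. The cleanest way to handle this is to write $f = \sum_i h_i\, g_i + \bar f$ for suitable polynomials $h_i$ with $\deg(h_i g_i)\le \deg(f)$, obtained by dividing out the high powers of each $x_i$; then on the grid each $g_i$ vanishes, so $f=\bar f$ there, and a degree bookkeeping argument shows the coefficient of $x_1^{t_1}\cdots x_n^{t_n}$ survives in $\bar f$ since it cannot come from any $h_i g_i$ term (as $g_i$ contributes a factor $x_i^{t_i+1}$). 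Everything else is routine.
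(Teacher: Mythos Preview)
Your argument is correct and is essentially Alon's original proof: reduce $f$ modulo the vanishing polynomials $g_i$ to obtain $\bar f$ of degree at most $t_i$ in each $x_i$, observe that the coefficient of $x_1^{t_1}\cdots x_n^{t_n}$ survives, and then invoke the standard fact that a polynomial of bounded degrees vanishing on a full grid must be identically zero. The paper itself does not give a proof of Theorem~\ref{Alonnull}; it simply quotes the result from \cite{Alo99} and applies it later (e.g., in the proof of Theorem~\ref{srclwheelthm}), so there is no paper proof to compare against beyond the original reference.
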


Theorem \ref{Alonnull} is useful for the study of list colourings. For instance, let $G$ be a graph, whose vertices are labelled $v_1,\dots, v_n$. For $1\le i\le n$, we associate the vertex $v_i$ with the variable $x_i$, and consider the graph polynomial
\begin{equation}
f_G(x)=\prod_{v_iv_j\in E(G),\,i<j}(x_i-x_j).\label{GPeq}
\end{equation}
Suppose that $G$ is given an $(r+1)$-list assignment $L$, and $\deg(f_G)=rn$. If we can verify that the coefficient of $x_1^r\cdots x_n^r$ in $f_G$ is non-zero, then by Theorem \ref{Alonnull}, there exist $\alpha_1\in L(v_1),\dots,\alpha_n\in L(v_n)$ such that $f_G(\alpha_1,\dots, \alpha_n)\neq 0$. The choices $\alpha_1,\dots,\alpha_n$ lead to a proper $L$-colouring of $G$. 
%
%
%

We are now ready to prove our results. For the cycle $C_n$, Chartrand et al.~\cite{CJMZ08} proved the following.

\begin{thm}\label{rccyclethm}\textup{\cite{CJMZ08}}
For $n\ge 4$, we have $rc(C_n)=src(C_n)=\lceil\frac{n}{2}\rceil$.
\end{thm}

We have the following result for the list colouring versions. 

\begin{thm}\label{rclcyclethm}
For $n\ge 4$, we have $rc^\ell(C_n)=src^\ell(C_n)=\lceil\frac{n}{2}\rceil$.
\end{thm}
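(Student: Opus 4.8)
The plan is to prove the two inequalities $rc^\ell(C_n) \le \lceil n/2 \rceil$ and $src^\ell(C_n) \ge \lceil n/2 \rceil$ separately; together with the chain $rc^\ell(C_n) \le src^\ell(C_n)$ from (\ref{ineqs}) and $\lceil n/2 \rceil = rc(C_n) \le rc^\ell(C_n)$ from Theorem~\ref{charthm}(a) (or \ref{ineqs}), these pin down both parameters to $\lceil n/2 \rceil$. The lower bound $src^\ell(C_n) \ge \lceil n/2 \rceil$ is free: by (\ref{ineqs}) we have $src^\ell(C_n) \ge src(C_n) = \lceil n/2 \rceil$ using Theorem~\ref{rccyclethm}. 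So the whole content is the upper bound $rc^\ell(C_n) \le \lceil n/2 \rceil$: given an arbitrary $\lceil n/2 \rceil$-edge-list assignment $L$ of $C_n$, we must produce an $L$-edge-colouring in which every pair of vertices is joined by a rainbow path.

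First I would set up notation: write $C_n = v_0 v_1 \cdots v_{n-1} v_0$, let $e_i = v_i v_{i+1}$ (indices mod $n$), and let $k = \lceil n/2 \rceil$; we may assume $|L(e_i)| = k$ for all $i$. The key structural observation is the one already used in the proof of Theorem~\ref{rccyclethm}: in a cycle, the two arcs between $v_i$ and $v_j$ have lengths $d$ and $n-d$ summing to $n$, so at least one arc has length $\le \lfloor n/2 \rfloor \le k$; hence a colouring is rainbow connected as soon as \emph{every} arc of length exactly $\lceil n/2 \rceil$ (equivalently, every ``short'' arc) is rainbow. For $n = 2k$ even, the relevant arcs are the $n$ paths of $k$ consecutive edges; for $n = 2k-1$ odd, the shortest arc between any two vertices has length at most $k-1 < k$, so in that case essentially \emph{every} short path automatically has room — actually for odd $n$ the obstruction is milder. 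I would handle the two parities, but the crux is even $n = 2k$.

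The core of the argument for $n = 2k$: I want to choose $c(e_i) \in L(e_i)$ so that each of the $n$ windows $\{e_i, e_{i+1}, \ldots, e_{i+k-1}\}$ receives $k$ distinct colours. Equivalently, thinking of the index set $\mathbb{Z}_n$, I need a list-colouring with lists of size $k$ such that every $k$ cyclically-consecutive edges are rainbow — this is exactly a \emph{proper list colouring} of the $k$-th power of the cycle... no, more precisely of the circular-interval conflict graph $G^*$ on vertex set $\{e_0,\dots,e_{n-1}\}$ where $e_i \sim e_j$ iff their cyclic distance is at most $k-1$. Since $n = 2k$, cyclic distance at most $k-1$ excludes only the antipodal pair, so $G^* = K_n$ minus a perfect matching $= K_{k \times 2}$, the complete multipartite graph with $k$ parts of size $2$. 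By Theorem~\ref{ERTthm}(b), $\chi_\ell(K_{k\times 2}) = k$, so any $k$-list assignment admits a proper $L$-colouring of $G^*$, which is precisely the edge-colouring of $C_n$ we want. For $n = 2k-1$ odd, the analogous conflict graph has cyclic-distance-$\le k-1$ adjacency, and one checks it is $K_n$ minus a cycle or is a comparably sparse graph whose list chromatic number is still at most $k$ (here $n = 2k-1 \le$ something like $2k$ makes Ohba-type or direct arguments apply) — I would verify this small case directly or reduce it to the even case by noting a shorter window suffices.

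The main obstacle — and the reason this is not entirely trivial — is precisely the identification of the ``rainbow windows'' condition with proper list colouring of $K_{k\times 2}$ and the invocation of the sharp Erd\H{o}s–Rubin–Taylor bound $\chi_\ell(K_{k\times 2}) = k$; one must be careful that $k$ lists of size $k$ really do suffice (the bound is tight, so there is no slack), and that the windowing condition is genuinely equivalent to properness in $K_{k\times 2}$ and not something strictly stronger. I also need to double-check the odd case does not secretly require $k+1$ colours — I expect it does not, since for odd $n$ every short arc has length $\le k-1$, giving an extra free edge per window, but this needs a clean statement. A secondary point to confirm is that rainbow-connectedness of \emph{all} pairs really does follow from rainbow-ness of all short arcs, including pairs at distance exactly $\lceil n/2 \rceil$ in the odd case and the two ``middle'' arcs in the even case — this is the same reasoning as in Theorem~\ref{rccyclethm} and should be recalled explicitly.
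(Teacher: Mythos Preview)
Your logical chain has a gap. From $rc^\ell(C_n)\le k$, $src^\ell(C_n)\ge k$, $rc^\ell(C_n)\le src^\ell(C_n)$, and $k\le rc^\ell(C_n)$ you obtain $rc^\ell(C_n)=k$ and $src^\ell(C_n)\ge k$, but nothing bounds $src^\ell(C_n)$ from above. You must prove $src^\ell(C_n)\le k$, not merely $rc^\ell(C_n)\le k$. Fortunately your method already does this: once every arc of length at most $\lfloor n/2\rfloor$ is rainbow, every geodesic is rainbow, so the colouring is \emph{strongly} rainbow connected. The paper frames the argument this way from the start. Your even case ($n=2k$) is then exactly the paper's proof: identify antipodal edge-pairs with the classes of $K_{k\times 2}$ and invoke Theorem~\ref{ERTthm}(b).

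Your odd case is not complete, and the conflict-graph description is off. For $n=2k-1$ geodesics have length at most $k-1$, so the relevant adjacency is cyclic distance $\le k-2$, not $\le k-1$; with the latter every pair of edges would conflict and you would be asking for a proper $k$-list-colouring of $K_{2k-1}$, which is impossible. With the correct adjacency the conflict graph is $\overline{C_{2k-1}}$, and showing $\chi_\ell(\overline{C_{2k-1}})\le k$ is not immediate from the tools at hand (it follows from the Noel--Reed--Wu theorem, which is far heavier than anything else in this proof). The paper instead executes precisely the reduction you mention in passing: pick an edge $e$ of $C_{2k-1}$ and a colour $\alpha\in L(e)$, remove $\alpha$ from every other list (leaving lists of size at least $k-1$), contract $e$ to obtain $C_{2(k-1)}$, and apply the even case to get a colouring in which every path of length at most $k-1$ is rainbow and avoids $\alpha$; then set $c(e)=\alpha$. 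Any arc of length at most $k-1$ in $C_{2k-1}$ either misses $e$ or, after contraction, becomes an arc of length at most $k-2$; either way it is rainbow.
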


\begin{proof}
By Theorem \ref{rccyclethm} and (\ref{ineqs}), we have $src^\ell(C_n)\ge rc^\ell(C_n)\ge rc(C_n)=\lceil\frac{n}{2}\rceil$. It suffices to prove $src^\ell(C_n)\le\lceil\frac{n}{2}\rceil$. Suppose first that $n=2r$ is even. Consider $K_{r\times 2}$, where each class of size two consists of opposite edges of $C_{2r}$. Let $L$ be an $r$-edge-list assignment of $C_{2r}$. Then $L$ becomes an $r$-list assignment of $K_{r\times 2}$. By Theorem \ref{ERTthm}(b), there exists a proper $L$-colouring $c$ of $K_{r\times 2}$. Then $c$ becomes an $L$-edge-colouring of $C_{2r}$ such that only opposite edges may possibly have the same colour. Hence, every path in $C_{2r}$ with length at most $r$ must be rainbow coloured under $c$, and $src^\ell(C_{2r})\le r$.

Now, suppose $n=2r+1\ge 5$ is odd. Let $L$ be an $(r+1)$-edge-list assignment of $C=C_{2r+1}$. Let $e\in E(C)$ and $\alpha\in L(e)$. Consider the cycle $C'=C_{2r}$ by contracting the edge $e$, and the $r$-edge-list assignment $L'$ of $C'$ where $L'(f)=L(f)\setminus\{\alpha\}$ for all $f\in E(C')$. By the result for $C_{2r}$, there exists an $L'$-edge-colouring $c'$ of $C'$ such that every path in $C'$ with length at most $r$ is rainbow coloured under $c'$. Letting $c'(e)=\alpha$, we have $c'$ is an $L$-edge-colouring of $C$ such that every path in $C$ with length at most $r$ is rainbow coloured under $c'$. Hence, $src^\ell(C_{2r+1})\le r+1$.
\end{proof}

Next, let $C_n=v_0v_1\cdots v_{n-1}v_0$ be the cycle on $n\ge 3$ vertices, where the indices of the vertices are taken modulo $n$. The \emph{wheel} on $n+1$ vertices is the graph $W_n$, obtained by connecting another vertex $v$ to all vertices of $C_n$. Chartrand et al.~\cite{CJMZ08} proved the following.
\begin{thm}\label{rcwheelthm}\textup{\cite{CJMZ08}}
Let $n\ge 3$. Then
\begin{enumerate}
\item[(a)] 
\[
rc(W_n)=
\left\{
\begin{array}{ll}
1, & \textup{\emph{if} }n=3,\\[0.3ex]
2, & \textup{\emph{if} }n=4,5,6,\\[0.3ex]
3, & \textup{\emph{if} }n\ge 7.
\end{array}
\right.
\]
\item[(b)] $src(W_n)=\lceil\frac{n}{3}\rceil$.
\end{enumerate}
\end{thm}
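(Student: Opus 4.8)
Both parts rest on one elementary observation about $W_n$, which I will state first. Call two rim vertices $v_i,v_j$ \emph{far} if their cyclic distance on $C_n$ is at least $3$. Then, in \emph{any} edge-colouring of $W_n$, if $v_i,v_j$ are far, their only common neighbour is the hub $v$, so the only path between them of length at most $2$ is $v_ivv_j$; in particular $d_{W_n}(v_i,v_j)=2$ and every $v_i$--$v_j$ geodesic equals $v_ivv_j$. Consequently: (i) in a strongly rainbow connected colouring we must have $c(vv_i)\ne c(vv_j)$ for every far pair; and (ii) in a rainbow connected colouring using only two colours, a rainbow $v_i$--$v_j$ path has length at most $2$, so again $c(vv_i)\ne c(vv_j)$ for every far pair. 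In both situations each colour class of spokes, viewed as a subset of the rim, consists of vertices with pairwise cyclic distance $\le 2$, and a short check shows that for $n\ge 6$ such a subset has at most $3$ elements.

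\textbf{Part (a).} Since $W_3=K_4$ is complete, $rc(W_3)=1$ by Theorem~\ref{charthm}(e). For $n\ge 4$ the graph $W_n$ is non-complete with universal vertex $v$ and $W_n-v=C_n$ connected, so Theorem~\ref{univthm}(b) gives $2\le rc(W_n)\le 3$ (the lower bound is also just $\mathrm{diam}(W_n)=2$). For $n=4,5,6$ I would write down explicit rainbow connected $2$-edge-colourings: colour the rim edges alternately (as far as possible) with $\{1,2\}$, which makes a rainbow rim path $v_iv_{i+1}v_{i+2}$ for almost every distance-$2$ rim pair, and colour the spokes in two consecutive blocks to repair the few remaining pairs through the hub; a direct check settles these cases. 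For $n\ge 7$, suppose $c$ were a rainbow connected $2$-edge-colouring. By the observation above, the two spoke colour classes are subsets of the rim with pairwise cyclic distance $\le 2$, hence each has size at most $3$, forcing $n\le 6$, a contradiction. Thus $rc(W_n)=3$ for $n\ge 7$.

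\textbf{Part (b).} For the lower bound: $src(W_3)=src(K_4)=1=\lceil 3/3\rceil$; for $n=4,5$ we have $\lceil n/3\rceil=2$ and $src(W_n)\ge 2$ since $W_n$ is non-complete (Theorem~\ref{charthm}(e)); and for $n\ge 6$ the observation forces the spokes into colour classes of size at most $3$, so any strongly rainbow connected colouring uses at least $\lceil n/3\rceil$ colours. For the upper bound with $r=\lceil n/3\rceil$ colours I would partition the spokes into $r$ blocks of at most three consecutive spokes, give all spokes in block $k$ the single colour $k$, colour the (at most two) internal rim edges of each block with two distinct colours, and colour the remaining inter-block rim edges arbitrarily. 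Every geodesic has length at most $2$; a length-$1$ geodesic is a single edge, hence rainbow; and for a length-$2$ geodesic between $v_i$ and $v_j$, either they lie in different spoke-blocks, in which case $v_ivv_j$ is rainbow, or they lie in the same block and hence are at rim-distance $2$, in which case the internal rim path $v_iv_{i+1}v_{i+2}$ is a rainbow geodesic by construction. Hence $src(W_n)=\lceil n/3\rceil$.

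\textbf{Main obstacle.} I do not expect any genuinely deep step; the work is bookkeeping. The most delicate points are: (1) nailing down the small-case constructions for $n=4,5,6$ in part (a) and verifying that the block construction in part (b) really covers \emph{every} geodesic (keeping track of which rim pairs are at distance $1$ versus $2$, and the behaviour of the inter-block rim edges); and (2) proving rigorously, for every $n\ge 6$, that a set of rim vertices with pairwise cyclic distance $\le 2$ has size at most $3$ — this wrap-around argument is the linchpin of both lower bounds and is the one place where a careless case split would go wrong.
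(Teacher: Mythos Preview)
The paper does not prove Theorem~\ref{rcwheelthm}; it is stated with the citation \cite{CJMZ08} and used as a known input (to feed the lower bounds in Theorems~\ref{rclwheelthm} and~\ref{srclwheelthm}). There is therefore no ``paper's own proof'' to compare against.

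That said, your proposal is a correct and standard reconstruction of the Chartrand--Johns--McKeon--Zhang argument. The key structural fact you isolate --- that for a far rim pair the only length-$2$ path is through the hub, so spoke colour classes must have pairwise cyclic distance at most $2$ and hence size at most $3$ once $n\ge 6$ --- is exactly what drives both lower bounds in \cite{CJMZ08}, and your block construction for the $src$ upper bound is essentially theirs. The only comment is presentational: your item (2) under ``Main obstacle'' is not actually delicate --- if $S\subset\mathbb{Z}_n$ has $|S|\ge 4$ and $n\ge 6$, pick any $s\in S$ and note $S\subset\{s-2,s-1,s,s+1,s+2\}$, whence two elements of $S$ lie in $\{s-2,s-1\}$ and $\{s+1,s+2\}$ respectively and are at cyclic distance $\ge 3$; no wrap-around case split is needed.
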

For list colourings, we first have the result that $rc^\ell(W_n)$ is equal to $rc(W_n)$.
\begin{thm}\label{rclwheelthm}
Let $n\ge 3$. Then
\[
rc^\ell(W_n)=
\left\{
\begin{array}{ll}
1, & \textup{\emph{if} }n=3,\\[0.3ex]
2, & \textup{\emph{if} }n=4,5,6,\\[0.3ex]
3, & \textup{\emph{if} }n\ge 7.
\end{array}
\right.
\]
\end{thm}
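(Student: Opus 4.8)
The plan is to establish the upper bound $rc^\ell(W_n)\le rc(W_n)$ in each of the three regimes, since the lower bounds $rc^\ell(W_n)\ge rc(W_n)$ come for free from (\ref{ineqs}) combined with Theorem~\ref{rcwheelthm}(a). The case $n=3$ is immediate: $W_3=K_4$ is complete, so Theorem~\ref{charthm}(e) gives $rc^\ell(W_3)=1$. For $n\ge 7$, we already have $rc^\ell(W_n)\le 3$ directly from Theorem~\ref{univthm}: the hub $v$ is a universal vertex, $W_n-v=C_n$ is connected (so $q=1$, $p=0$), and part~(b) of that theorem yields $rc^\ell(W_n)\le 3$; together with $rc^\ell(W_n)\ge rc(W_n)=3$ this settles $n\ge 7$. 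So the real work is confined to $n\in\{4,5,6\}$, where we must show $rc^\ell(W_n)\le 2$.

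For $n\in\{4,5,6\}$ I would argue directly with an arbitrary $2$-edge-list assignment $L$. Fix notation $C_n=v_0\cdots v_{n-1}v_0$ and hub $v$. The strategy is to colour the spokes $vv_i$ and rim edges $v_iv_{i+1}$ so that every pair of vertices is joined by a rainbow path of length $\le 2$: for two rim vertices $v_i,v_j$ at cyclic distance $1$ this is the rim edge; at cyclic distance $2$ it is either the rim $2$-path $v_iv_{i+1}v_j$ or the $2$-path $v_ivv_j$; and for $v$ and any $v_i$ it is the spoke. Concretely, first colour the spokes: since each spoke list has size $2$, I can choose the spoke colours so that consecutive spokes $vv_i$ and $vv_{i+1}$ receive distinct colours unless forced otherwise — and the only obstruction is a block of consecutive spokes whose lists are all the same $2$-set $\{a,b\}$, which for $n\le 6$ is easy to analyse by cases. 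Then, for each rim edge $v_iv_{i+1}$, pick $c(v_iv_{i+1})\in L(v_iv_{i+1})$ avoiding $c(vv_i)$ and $c(vv_{i+1})$ whenever possible (so that the $2$-path $v_ivv_{i+1}$ is not needed — the rim edge itself suffices at distance $1$); the list has size $2$, so at most two colours are forbidden, and the genuinely tight situations again occur only when a list equals a specific $2$-set. In those tight sub-cases one falls back on the other available rainbow $2$-path through the hub, and a short finite check (made shorter by the rotational symmetry of $W_n$, and by the fact that we never need distance-$3$ paths when $n\le 6$) confirms a rainbow connected $L$-colouring always exists.

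The main obstacle, and the only place any care is needed, is the finite case analysis for $n\in\{4,5,6\}$ when several edge-lists coincide and the naive greedy choices collide — in particular ensuring that when the rim edge $v_iv_{i+1}$ is forced to repeat a spoke colour, the alternative $2$-path $v_ivv_j$ (for the relevant distance-$2$ pair) is still rainbow. This is entirely mechanical: each of the three wheels has a bounded number of edges, the rotational symmetry cuts the casework substantially, and in every configuration there remains enough freedom in the size-$2$ lists to route all of the finitely many vertex pairs. I expect this part to occupy most of the written proof but to contain no conceptual difficulty. I would present $n=3$ and $n\ge 7$ in one or two lines each via Theorems~\ref{charthm}(e) and~\ref{univthm}, and then devote the bulk of the argument to the $n\in\{4,5,6\}$ check.
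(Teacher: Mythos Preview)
Your framework for $n=3$ and $n\ge 7$ matches the paper's exactly. For $n\in\{4,5,6\}$, your plan is workable but more laborious than the paper's argument, which inverts your priority: rather than colouring the spokes first and then patching the rim, the paper colours the \emph{rim} first and barely touches the spokes.

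Concretely, for $n=4,6$ the paper invokes Theorem~\ref{ERTthm}(a) to get $\chi'_\ell(C_{2p})=2$, so the rim cycle admits a proper $L$-edge-colouring; every rim $2$-path is then automatically rainbow, which already handles all distance-$2$ pairs for $n=4$. For $n=6$ one additionally needs opposite vertices $v_i,v_{i+3}$ connected, and for this it suffices to make each pair of opposite spokes $vv_i,vv_{i+3}$ distinct --- three independent constraints, each trivially satisfiable from size-$2$ lists. For $n=5$, greedily colour the rim so that $c(v_iv_{i+1})\neq c(v_{i+1}v_{i+2})$ for $0\le i\le 3$, and make the single spoke pair $vv_1,vv_4$ distinct; then the only distance-$2$ pair not served by a rainbow rim $2$-path is $\{v_1,v_4\}$, which is handled by $v_1vv_4$. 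All remaining edges are coloured arbitrarily from their lists. No residual case analysis is required.

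What the paper's approach buys is precisely the elimination of the ``tight sub-cases'' you anticipate: by anchoring on a proper rim colouring, the distance-$2$ rainbow paths are guaranteed \emph{before} any spoke is considered, so no fallback is ever needed. Your spokes-first greedy, by contrast, can genuinely collide --- for instance, for $n=4$ with every list equal to $\{1,2\}$, alternating the spokes forces $c(vv_0)=c(vv_2)$, and if the rim is then coloured carelessly both candidate $2$-paths for $v_0,v_2$ fail. You would of course catch this in your finite check, but the paper's rim-first ordering sidesteps it entirely.
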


\begin{proof}
Clearly, $rc^\ell(W_3)=rc^\ell(K_4)=1$. By Theorem \ref{rcwheelthm}(a) and (\ref{ineqs}), we have $rc^\ell(W_n)\ge 2$ for $4\le n\le 6$, and $rc^\ell(W_n)\ge 3$ for $n\ge 7$. By Theorem \ref{univthm}(b), we have $rc^\ell(W_n)\le 3$ for $n\ge 3$. Now, let $4\le n\le 6$. Let $L$ be a $2$-edge-list assignment of $W_n$. We obtain an $L$-edge-colouring $c$ as follows. By Theorem \ref{ERTthm}(a), we have $\chi'_\ell(C_{2p})=2$ for $p\ge 2$. For $n=4,6$, choose a proper $L$-edge-colouring $c$ for $C_n$. In addition, for $n=6$, let $c(vv_i)\in L(vv_i)$ and $c(vv_{i+3})\in L(vv_{i+3})$ be distinct for $0\le i\le 2$. For $n=5$, let $c$ be an $L$-edge-colouring such that $c(v_iv_{i+1})\neq c(v_{i+1}v_{i+2})$ for $0\le i\le 3$, and $c(vv_1)\neq c(vv_4)$. For the remaining edges, choose arbitrary colours from their lists. Then $c$ is a rainbow connected colouring, and $rc^\ell(W_n)\le 2$.
\end{proof}
For the strong rainbow connection parameters, we have not been able to prove whether or not we have $src^\ell(W_n)=src(W_n)$. We have the following partial result.
\begin{thm}\label{srclwheelthm}
We have\\[-0.2cm]
\[
src^\ell(W_n)=
\left\{
\begin{array}{ll}
1, & \textup{\emph{if} }n=3,\\[0.3ex]
2, & \textup{\emph{if} }n=4,5,6,\\[0.3ex]
3, & \textup{\emph{if} }n=7,8,9.
\end{array}
\right.
\]
\indent For $n\ge 10$, we have 
\begin{equation}\label{srclWnUB}
\Big\lceil\frac{n}{3}\Big\rceil \le src^\ell(W_n)\le
\left\{
\begin{array}{ll}
\lceil\frac{4n-3}{9}\rceil, & \textup{\emph{if} }n\equiv 0\textup{ (mod 3)},\\[0.3ex]
\lceil\frac{4n-1}{9}\rceil, & \textup{\emph{if} }n\equiv 1\textup{ (mod 3)},\\[0.3ex]
\lceil\frac{4n+1}{9}\rceil, & \textup{\emph{if} }n\equiv 2\textup{ (mod 3)}.
\end{array}
\right.
\end{equation}
\end{thm}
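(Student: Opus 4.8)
The plan is to handle the small cases $3\le n\le 9$ directly, and then establish the two bounds in \eqref{srclWnUB} for $n\ge 10$. For the small cases, the lower bounds come for free: by Theorem~\ref{rcwheelthm}(b) and \eqref{ineqs} we have $src^\ell(W_n)\ge src(W_n)=\lceil n/3\rceil$, which gives $\ge 1,2,2,2,3,3,3$ for $n=3,\dots,9$. The matching upper bounds for $n=3,\dots,6$ are already supplied by Theorem~\ref{rclwheelthm}, since the rainbow connected $L$-edge-colourings constructed there for $W_4,W_5,W_6$ are in fact strongly rainbow connected (the diameter is $2$, and one checks the short paths built there are geodesics). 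So the genuinely new small-case work is showing $src^\ell(W_n)\le 3$ for $n=7,8,9$: given a $3$-edge-list assignment $L$, I would colour the spokes $vv_i$ so that the three spokes in each ``block'' of consecutive vertices receive distinct colours (possible by a greedy/Hall-type argument since lists have size $3$ and at each spoke only $2$ colours are forbidden when $n\le 9$), and colour the rim edges arbitrarily from their lists; then any two rim vertices at distance $2$ are joined by the geodesic $v_i v v_j$, which is rainbow whenever $c(vv_i)\ne c(vv_j)$, and the adjacent pairs and the $v$--$v_i$ pairs are trivially fine. The block structure must be arranged so that no two spokes forced to the same colour lie within distance $2$ along the rim; for $n\le 9$ there is enough room.

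For the upper bound when $n\ge 10$: the key observation is that $W_n-v=C_n$, and in a strongly rainbow connected colouring the only geodesics that matter between rim vertices $v_i,v_j$ at distance $2$ (in $W_n$) are $v_i v v_j$ together with the rim path of length $2$ if $d_{C_n}(i,j)=2$; for vertices at rim-distance $\ge 3$ the unique geodesic in $W_n$ is $v_i v v_j$. Hence it suffices to (i) colour the spokes so that any two spokes $vv_i,vv_j$ with $i\ne j$ and $|i-j|\ne 1,2 \pmod n$ get distinct colours, i.e.\ partition the spokes into colour classes where each class is a set of vertices pairwise within rim-distance $2$, which forces each colour class to be contained in a window of $3$ consecutive vertices; and (ii) colour each rim edge $v_iv_{i+1}$ with a colour distinct from the two spokes $c(vv_i),c(vv_{i+1})$ and from the ``other'' spoke colour needed to rainbow the length-$2$ rim geodesics. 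Grouping the $n$ spokes into $\lceil n/3\rceil$ windows of size $\le 3$ gives the classical bound, but in the \emph{list} setting we cannot insist that each window's three spokes share a list, so we must allow some windows to use extra colours. The claimed bounds $\lceil (4n-3)/9\rceil$ etc.\ are exactly what one gets by grouping the windows themselves into triples: within a block of (up to) $3$ consecutive windows — i.e.\ up to $9$ consecutive spokes — we can always $L$-colour the $9$ spokes using at most $4$ colours so that the colouring is constant on each window (this is a list-colouring statement about $K_{3\times 3}$-type structure, controlled by Theorem~\ref{KieOhbthm}, which gives $\chi_\ell(K_{3\times 3})=\lceil 11/3\rceil=4$), and then reuse these $4$ colours block by block after a cyclic shift to avoid clashes across block boundaries. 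Carefully accounting for $n \bmod 9$ and the $\le 2$ leftover windows gives the three cases according to $n\bmod 3$.

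More precisely, I would: first fix the window decomposition of the $n$ spokes into $w=\lceil n/3\rceil$ windows $W_1,\dots,W_w$ each of $2$ or $3$ consecutive spokes; then decompose the windows into $\lceil w/3\rceil$ \emph{blocks} of $\le 3$ consecutive windows; within each block, model the $\le 9$ spokes as the parts of a complete multipartite graph on $\le 3$ parts (one part per window), apply Theorem~\ref{KieOhbthm} (the $K_{t_1\times1,t_3\times3}$ formula, with $t\le 3$, $n\le 9$) to obtain a proper $L$-colouring of this multipartite graph with $\le 4$ colours, which is precisely a spoke-colouring constant on each window; then relabel the $4$ colours used in consecutive blocks so that windows in adjacent blocks don't collide (only the boundary windows can see each other, and since distinct windows in the same block already got distinct colours, two extra colours suffice to ``rotate'' between blocks — this is where the $+1$ in the $4n\pm$ numerators comes from, and where the parity/residue cases split); and finally colour the rim edges greedily, each $v_iv_{i+1}$ avoiding at most $3$ already-used colours at its two endpoints, which is possible since we will have $\ge 4$ colours available globally whenever $n\ge 10$. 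The lower bound $\lceil n/3\rceil$ is, as noted, immediate from Theorem~\ref{rcwheelthm}(b).

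\textbf{Main obstacle.} The hard part will be the cross-block bookkeeping: ensuring that the $4$-colour palettes chosen \emph{independently} inside consecutive blocks can be reconciled so that no two spokes within rim-distance $2$ end up equal, while not inflating the total colour count beyond the stated $\lceil(4n-c)/9\rceil$ for the appropriate $c\in\{3,-1,1\}$. This requires treating the $n\equiv 0,1,2\pmod 3$ residues (and really the $n\bmod 9$ residue, governing how many windows the last block has and whether the last window has $2$ or $3$ spokes) as separate sub-cases, and in each one exhibiting an explicit cyclic colour-rotation scheme together with a verification that all length-$2$ geodesics (both the spoke-pair geodesics $v_ivv_j$ and the rim geodesics $v_{i-1}v_iv_{i+1}$) are rainbow. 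Everything else — the small cases, the lower bounds, and the single-block application of Theorem~\ref{KieOhbthm} — is routine by comparison.
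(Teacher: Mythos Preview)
Your plan has two genuine gaps.

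First, the argument for $n=7,8,9$ does not work. The condition you need on the spokes is $c(vv_i)\neq c(vv_j)$ whenever $d_{C_n}(v_i,v_j)\ge 3$; in other words, you need a proper $L$-colouring of $\overline{C_n^2}$ from lists of size $3$. For $n=9$ the graph $\overline{C_9^2}$ is $4$-regular, so your claim that ``at each spoke only $2$ colours are forbidden'' is false, and neither a greedy nor a Hall-type argument will give $3$-choosability. The paper handles this differently: it first proves the clean reduction $src^\ell(W_n)=\chi_\ell(\overline{C_n^2})$ for $n\ge 7$ (Proposition~\ref{Cn2compprop}), uses $C_7'\subset C_8'\subset C_9'$ to reduce to $n=9$, and then shows $\chi_\ell(\overline{C_9^2})\le 3$ via Alon's Combinatorial Nullstellensatz (Theorem~\ref{Alonnull}), computing that the coefficient of $x_0^2\cdots x_8^2$ in the graph polynomial of $\overline{C_9^2}$ is $-18\neq 0$. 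Some non-trivial tool really is needed here.

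Second, your block-and-glue scheme for $n\ge 10$ is not sound for \emph{list} colourings. Applying Theorem~\ref{KieOhbthm} to each block of nine spokes produces colours coming from the \emph{given} lists of those spokes; there is no freedom to ``relabel the $4$ colours used in consecutive blocks'' or to ``reuse these $4$ colours block by block after a cyclic shift'', because the colours are fixed elements of $\mathbb N$, and the lists in different blocks need not overlap at all. (Also, a proper list colouring of $K_{3\times3}$ is not in general constant on each part, so the ``constant on each window'' step fails as stated.) The paper avoids all of this: via Proposition~\ref{Cn2compprop} the task is simply to bound $\chi_\ell(\overline{C_n^2})$, and since $\overline{C_n^2}\subset K_{t_1\times 1,\,t_3\times 3}$ with $n=t_1+3t_3$ and $0\le t_1\le 2$, a \emph{single global} application of Theorem~\ref{KieOhbthm} gives
\[
src^\ell(W_n)=\chi_\ell(\overline{C_n^2})\le \chi_\ell(K_{t_1\times 1,\,t_3\times 3})=\Big\lceil\frac{n+t-1}{3}\Big\rceil=\Big\lceil\frac{4n+2t_1-3}{9}\Big\rceil,
\]
which is exactly the three cases of \eqref{srclWnUB}. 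No cross-block bookkeeping is needed; the ``main obstacle'' you identify is an artefact of the wrong approach.
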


We first show in Proposition \ref{Cn2compprop} below that for $n\ge 7$, computing $src^\ell(W_n)$ is equivalent to computing the list chromatic number of $\overline{C_n^2}$, the complement of the square of the cycle $C_n$. The latter problem can be seen to be more natural.  Recall that $C_n^2$ is obtained from $C_n$ by adding all edges $xy$ where $x$ and $y$ are at distance $2$ in $C_n$. Let $u_0,\dots,u_{n-1}$ be the vertices of $\overline{C_n^2}$, taken cyclically modulo $n$ in that order.

\begin{prop}\label{Cn2compprop}
Let $n\ge 7$. Then $src^\ell(W_n)=\chi_\ell(\overline{C_n^2})$.
\end{prop}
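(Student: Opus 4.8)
The plan is to show that a strongly rainbow connected $L$-edge-colouring of $W_n$ corresponds exactly to a proper $L$-colouring of $\overline{C_n^2}$, by analysing which edges of $W_n$ are forced to receive distinct colours. First I would observe that for $n\ge 7$ we have $\mathrm{diam}(W_n)=2$, so a rainbow geodesic between two vertices is either an edge or a path of length $2$. The geodesics of length $2$ in $W_n$ are: $v_iv v_j$ (through the hub $v$) for non-adjacent rim vertices $v_i,v_j$; $v v_i v_{i+1}$ only when $v$ and $v_{i+1}$ are non-adjacent, which never happens since $v$ is universal; and $v_{i-1}v_i v_{i+1}$, which is a geodesic between $v_{i-1}$ and $v_{i+1}$ precisely when $v_{i-1}v_{i+1}\notin E(W_n)$, i.e. always (the rim is a cycle, not a triangle, for $n\ge 4$). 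So the only constraint coming from a length-$2$ rim geodesic $v_{i-1}v_iv_{i+1}$ is that the two rim edges $v_{i-1}v_i$ and $v_iv_{i+1}$ receive distinct colours; but this is automatically a consequence of the spoke constraints below, as I will need to check, OR it should be folded into the equivalence. The essential point is to identify the real constraints.

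Next, I would pin down the forced distinctness conditions. A geodesic between non-adjacent rim vertices $v_i,v_j$ must be rainbow, and the only candidate geodesics have the form $v_iv v_j$; hence we need $c(vv_i)\ne c(vv_j)$ whenever $v_iv_j\notin E(C_n)$, i.e. whenever $i$ and $j$ differ by at least $2$ cyclically. Identifying each spoke $vv_i$ with the vertex $u_i$ of $\overline{C_n^2}$, the condition ``$c(vv_i)\ne c(vv_j)$ for all $i,j$ at cyclic distance $\ge 2$'' is precisely properness of the colouring of $\overline{C_n^2}$ (whose edges join exactly those pairs $u_i,u_j$ with $j\ne i\pm 1$). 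So any strongly rainbow connected colouring of $W_n$, restricted to the spokes, yields a proper $L$-colouring of $\overline{C_n^2}$; this gives $src^\ell(W_n)\ge\chi_\ell(\overline{C_n^2})$. Conversely, given $r\ge\chi_\ell(\overline{C_n^2})$ and an $r$-edge-list assignment $L$ of $W_n$, I colour the spokes by a proper $L$-colouring of $\overline{C_n^2}$ and then colour the rim edges arbitrarily from their lists; I then need to verify this is strongly rainbow connected, i.e. every geodesic is rainbow. The geodesics are: single edges (trivially rainbow); spoke paths $v_iv v_j$, rainbow by construction; and rim paths $v_{i-1}v_iv_{i+1}$. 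For the last, I must check that the two rim edges get distinct colours — and here I would argue that since $v_{i-1}$ and $v_{i+1}$ are also joined by the geodesic $v_{i-1}v v_{i+1}$ (as $v_{i-1}v_{i+1}\notin E(C_n)$ for $n\ge 4$), it suffices that \emph{some} geodesic between them is rainbow, and $v_{i-1}v v_{i+1}$ already is. Thus the rim edges need no constraint at all, and arbitrary colouring works, giving $src^\ell(W_n)\le\chi_\ell(\overline{C_n^2})$.

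The main obstacle, and the place requiring the most care, is the geodesic bookkeeping: verifying that for $n\ge 7$ the \emph{only} geodesics needing attention are the ones listed, and in particular that length-$2$ rim geodesics are ``covered'' by an alternative spoke geodesic so that no extra constraint on rim edges is generated. I would double-check the boundary behaviour (e.g. that $v_{i-1}$ and $v_{i+1}$ are genuinely at distance $2$, not $1$, which needs $n\ge 5$, and that no shorter path sneaks in), and confirm that the hypothesis $n\ge 7$ is what guarantees $\mathrm{diam}(W_n)=2$ with no degenerate adjacencies among the rim vertices involved. Everything else is a clean translation between the spoke-colouring constraints and the edge set of $\overline{C_n^2}$.
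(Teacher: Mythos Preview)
There is a genuine gap: you have misidentified the edge set of $\overline{C_n^2}$. In $C_n^2$ every vertex is adjacent to its four nearest neighbours (cyclic distances $1$ and $2$), so in the complement $\overline{C_n^2}$ the edges join pairs $u_i,u_j$ at cyclic distance $\ge 3$, not $\ge 2$ as you wrote. With your description, the graph you are really working with is $\overline{C_n}$, and in general $\chi_\ell(\overline{C_n})>\chi_\ell(\overline{C_n^2})$ (for instance $\overline{C_7^2}\cong C_7$ has list chromatic number $3$, while $\overline{C_7}$ already has chromatic number $\ge 4$), so the two directions cannot both survive this slip.

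Concretely, both halves of your argument break. For the bound $src^\ell(W_n)\ge\chi_\ell(\overline{C_n^2})$ you assert that the only geodesic between non-adjacent rim vertices is through $v$; this is false precisely when $d_C(v_i,v_j)=2$, since $v_iv_{i+1}v_{i+2}$ is also a geodesic. Hence a strongly rainbow connected colouring does \emph{not} force $c(vv_i)\ne c(vv_{i+2})$, and the spoke colouring is only guaranteed proper on the true $\overline{C_n^2}$ (distance $\ge 3$). That direction is then fine once the graph is corrected. For the bound $src^\ell(W_n)\le\chi_\ell(\overline{C_n^2})$, however, colouring the rim arbitrarily no longer works: with the correct $\overline{C_n^2}$ the spokes $vv_i$ and $vv_{i+2}$ may well receive the same colour, so the pair $v_i,v_{i+2}$ must be served by the rim geodesic, forcing $c(v_iv_{i+1})\ne c(v_{i+1}v_{i+2})$. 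The paper fixes this by observing $\chi_\ell(\overline{C_n^2})\ge 3$ for $n\ge 7$ and then choosing $c$ to be a proper edge-colouring on the rim cycle $C$ from the given lists, which is possible since the lists have size at least $3$.
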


\begin{proof}
Write $C$ for the cycle $C_n$ of $W_n$, and $C_n'$ for $\overline{C_n^2}$. First, let $s=\chi_\ell(C_n')$, and $L$ be an $s$-edge-list assignment of $W_n$. We have an $s$-list assignment $L'$ of $C_n'$, where $L'(u_i)=L(vv_i)$ for $0\le i\le n-1$. There exists a proper $L'$-colouring of $C_n'$. This leads a colouring $c$ of the edges $vv_i$ of $W_n$, where $c(vv_j)\neq c(vv_k)$ whenever $d_C(v_j,v_k)\ge 3$. Note that $s=\chi_\ell(C_n')\ge \chi(C_n')\ge 3$, since $C_7'=C_7$; $C_8'\supset C_5$; and $C_n'\supset C_3$ if $n\ge 9$. Thus, $c$ becomes a strongly rainbow connected $L$-edge-colouring of $W_n$ if we set $c$ to be proper on $C$. Hence, we have $src^\ell(W_n)\le \chi_\ell(C_n')$.

Now, let $r=src^\ell(W_n)$, and $L$ be an $r$-list assignment of $C_n'$. We have an $r$-edge-list assignment $L'$ of the edges $vv_i$ of $W_n$, where $L'(vv_i)=L(u_i)$ for $0\le i\le n-1$. We extend $L'$ to all edges of $W_n$, by assigning arbitrary lists of size $r$ to the edges of $C$. There exists a strongly rainbow connected $L'$-edge-colouring $c$ of $W_n$. Since $c(vv_j)\neq c(vv_k)$ whenever $d_C(v_j,v_k)\ge 3$, we may obtain a proper $L$-colouring of $C_n'$ from $c$. Hence, we have $src^\ell(W_n)\ge \chi_\ell(C_n')$.
\end{proof}

\begin{proof}[Proof of Theorem \ref{srclwheelthm}]
As before, write $C$ for the cycle $C_n$ of $W_n$, and $C_n'$ for $\overline{C_n^2}$. By Theorem \ref{rcwheelthm}(b) and (\ref{ineqs}), we have $src^\ell(W_n)\ge src(W_n)=\lceil\frac{n}{3}\rceil$ for all $n\ge 3$. Clearly, $src^\ell(W_3)=src^\ell(K_4)=1$. For $4\le n\le 6$, given a $2$-edge-list assignment $L$ of $W_n$, the $L$-edge-colouring as described in the proof of Theorem \ref{rclwheelthm} is in fact strongly rainbow connected. Hence, $src^\ell(W_n)=2$. We prove the upper bounds for $src^\ell(W_n)$, where $n\ge 7$. 

First, consider $7\le n\le 9$. Since $C_7'\subset C_8'\subset C_9'$, we have $\chi_\ell(C_7')\le\chi_\ell(C_8')\le\chi_\ell(C_9')$. By Proposition \ref{Cn2compprop}, it suffices to prove $\chi_\ell(C_9')\le 3$. Consider the graph polynomial $f_{C_9'}$ of $C_9'$ as defined in (\ref{GPeq}), where the vertex $u_i$ is associated with the variable $x_i$, for $0\le i\le 8$. We have $f_{C_9'}$ is a product of 18 terms of the form $(x_i-x_j)$, where $0\le i<j\le 8$ and $u_iu_j\in E(C_9')$, and so $\deg(f_{C_9'})=18$. With some effort, we may compute the coefficient of $x_0^2\cdots x_8^2$ in $f_{C_9'}$, which is $-18$. By Alon's combinatorial nullstellensatz (Theorem \ref{Alonnull}), if $L$ is a $3$-list assignment of $C_9'$, there exist $\alpha_0\in L(u_0),\dots,\alpha_8\in L(u_8)$ such that $f_{C_9'}(\alpha_0,\dots,\alpha_8)\neq 0$. The choices $\alpha_0,\dots,\alpha_8$ lead to a proper $L$-colouring of $C_9'$, and hence $\chi_\ell(C_9')\le 3$, as required.

Now, let $n=t_1+3t_3\ge 10$, where $0\le t_1\le 2$. Note that $C_n'\subset K_{t_1\times 1,t_3\times 3}$, where the classes of $K_{t_1\times 1,t_3\times 3}$ are $\{u_0,u_1,u_2\},\{u_3,u_4,u_5\},\dots$, with $\{u_{n-1}\}$ if $n\equiv 1$ (mod 3), and $\{u_{n-2}\},\{u_{n-1}\}$ if $n\equiv 2$ (mod 3). By Theorem \ref{KieOhbthm} and Proposition \ref{Cn2compprop}, we have 
\[
src^\ell(W_n)=\chi_\ell(C_n')\le \chi_\ell(K_{t_1\times 1,t_3\times 3})=\max\Big(t,\Big\lceil\frac{n+t-1}{3}\Big\rceil\Big),
\]
where $t=t_1+t_3$ and $n=t_1+3t_3$. We have $t\le \lceil\frac{n+t-1}{3}\rceil$, and so $src^\ell(W_n)\le\lceil\frac{n+t-1}{3}\rceil=\lceil\frac{4n+2t_1-3}{9}\rceil$. The upper bounds of (\ref{srclWnUB}) follow.
\end{proof}

In Theorem \ref{srclwheelthm}, we believe that the correct value of $src^\ell(W_n)$ should be far from $\frac{4n}{9}$. We leave the following as an open problem.

\begin{prob}\label{srclwheelprob}
Determine $src^\ell(W_n)$, for all $n\ge 3$.
\end{prob}

Now, we consider complete bipartite graphs. Let $K_{m,n}$ be the complete bipartite graph with classes $U=\{u_1,\dots,u_m\}$ and $V=\{v_1,\dots,v_n\}$, where $1\le m\le n$. For an edge-colouring $c$ of $K_{m,n}$, we associate each vertex $v_j$ with an $m$-vector $\vec{v}_j$, where the $i$th component is $\vec{v}_{ji}=c(v_iv_j)$, for $1\le i\le m$.

The rainbow connection and strong rainbow connection numbers of $K_{m,n}$ were determined by Chartrand et al.~\cite{CJMZ08}, as follows.
\begin{thm}\label{rcsrcKmnthm}\textup{\cite{CJMZ08}}
Let $1\le m\le n$. Then 
\begin{enumerate}
\item[(a)]
\[
rc(K_{m,n})=
\left\{
\begin{array}{ll}
n, & \textup{\emph{if }}m=1,\\
\min(\lceil\!\sqrt[m]{n}\,\rceil,4), & \textup{\emph{if }}m\ge 2.
\end{array}
\right.
\]
\item[(b)] $src(K_{m,n})=\lceil\!\sqrt[m]{n}\,\rceil$.
\end{enumerate}
\end{thm}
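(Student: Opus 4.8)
The plan is to carry out everything in the matrix/vector encoding fixed just before the statement: a colouring is recorded by its \emph{columns} $\vec v_1,\dots,\vec v_n\in[r]^m$, where $\vec v_{ji}=c(u_iv_j)$, and dually by its \emph{rows} $(\vec v_{1i},\dots,\vec v_{ni})$ for $1\le i\le m$. Since $K_{m,n}$ has diameter at most $2$, a geodesic joining two vertices of the same class has length exactly $2$ (through a common neighbour), while a geodesic between the two classes is a single edge. Thus a colouring is strongly rainbow connected if and only if (i) the columns $\vec v_j$ are pairwise distinct (this is exactly the condition that every pair in $V$ has a rainbow geodesic $v_ju_iv_{j'}$), and (ii) the rows are pairwise distinct (the condition for every pair in $U$); pairs split across the two classes are automatically joined by their edge. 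I would record this equivalence first, as it drives both parts.

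For part (b), the lower bound is immediate from (i): distinct columns require $n$ distinct vectors of $[r]^m$, whence $r^m\ge n$ and $r\ge\lceil\sqrt[m]{n}\rceil$. For the matching upper bound, set $t=\lceil\sqrt[m]{n}\rceil$ (so $t^m\ge n$ and, for $n\ge2$, $t\ge2$) and build a $t$-colouring realising both (i) and (ii). I would take the first $m$ columns to be the ``near-identity'' block $\vec v_{ii}=2$ and $\vec v_{ji}=1$ for $j\ne i$, which already makes all $m$ rows pairwise distinct, and then assign the remaining $n-m$ columns any vectors of $[t]^m$ distinct from one another and from the first $m$ (possible since $t^m\ge n$). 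This yields distinct rows and distinct columns, so $src(K_{m,n})\le t$; the case $m=1$ (hence the star $K_{1,n}$) is trivial, giving $src=n=\lceil\sqrt[1]{n}\rceil$.

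For part (a) I would split on $m$. For $m=1$, $K_{1,n}$ is a tree, so $rc=e(G)=n$ by Theorem~\ref{charthm}(g). For $m\ge2$, write $t=\lceil\sqrt[m]{n}\rceil$; the two upper bounds $rc\le src=t$ and $rc\le4$ give $rc\le\min(t,4)$, and the lower bound $rc\ge\min(t,4)$ comes from a short parity argument: any rainbow-connecting colouring with $r<\min(t,4)$ colours has $r\le3$, so every rainbow path between two vertices of $V$ has length at most $3$; being of even length in a bipartite graph, it must have length $2$, i.e.\ be a geodesic. Hence the columns must again be distinct, forcing $r^m\ge n$ and $r\ge t$, a contradiction.

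The main obstacle is the construction showing $rc(K_{m,n})\le4$ for $m\ge2$; I only need it when $\min(t,4)=4$, i.e.\ $t\ge5$, where $n>4^m\ge16$ gives ample room. Fixing two vertices $u_1,u_2\in U$, I would colour each $v_j$ with a pair $(a_j,b_j)=(c(u_1v_j),c(u_2v_j))$ satisfying $a_j\ne b_j$, arranged so that every one of the six unordered pairs $\{\alpha,\beta\}\subseteq\{1,2,3,4\}$ occurs as some $\{a_j,b_j\}$ (possible as $n\ge6$). Then for $v_j,v_{j'}$ with $(a_j,b_j)\ne(a_{j'},b_{j'})$ a length-$2$ path through $u_1$ or $u_2$ is rainbow, while if $(a_j,b_j)=(a_{j'},b_{j'})=(\alpha,\beta)$ I pick $v_k$ with $\{a_k,b_k\}=\{1,2,3,4\}\setminus\{\alpha,\beta\}$ and use the rainbow path $v_ju_1v_ku_2v_{j'}$, whose four edges carry the four distinct colours $\alpha,a_k,b_k,\beta$. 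Since $a_j\ne b_j$ forces rows $1,2$ to differ in every coordinate, I can finally choose the edges at $u_3,\dots,u_m$ so that all $m$ rows are distinct, which supplies a rainbow (length-$2$) path for every pair in $U$; cross-class pairs are joined by edges. Verifying that this construction indeed covers every pair, and that the regimes glue together to give exactly $\min(\lceil\sqrt[m]{n}\rceil,4)$, is where the bookkeeping lies.
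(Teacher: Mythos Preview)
The statement you are proving is quoted in the paper as a result of Chartrand, Johns, McKeon and Zhang~\cite{CJMZ08}; the paper itself does not supply a proof, so there is nothing in the paper to compare your argument against. (The paper only proves the list-colouring analogue, Theorem~\ref{rcsrclKmnthm}, and invokes Theorem~\ref{rcsrcKmnthm} solely for the lower bounds.)

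That said, your argument is correct. The equivalence ``strongly rainbow connected $\Longleftrightarrow$ the column vectors $\vec v_1,\dots,\vec v_n$ are pairwise distinct and the $m$ row vectors are pairwise distinct'' is exactly the right organising observation, and both constructions go through cleanly. One cosmetic slip: ``$\min(t,4)=4$, i.e.\ $t\ge5$'' is literally false ($\min(t,4)=4$ means $t\ge4$), but your intended point is right --- the separate $4$-colour construction is only \emph{needed} when $t\ge5$, since for $t=4$ the inequality $rc\le src=4$ already does the job, and $t\ge5$ gives $n>4^m\ge16$, more than enough to realise all six unordered pairs on $\{1,2,3,4\}$ in the $(u_1,u_2)$-coordinates.

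For comparison, the proof of the list version in the paper (Theorem~\ref{rcsrclKmnthm}) uses the same column/row-vector bookkeeping. For the $rc\le4$ part it takes a slightly different route: rather than realising all six colour-pairs and routing an equal-label pair $v_j,v_q$ through a vertex with the complementary pair, it fixes a single anchor vertex $v_1$, forces $\vec v_{11},\vec v_{12},\vec v_{j1},\vec v_{j2}$ to be distinct for every large $j$, and then uses the fixed path $v_ju_1v_1u_2v_q$. Your ``complementary pair'' construction is a pleasant, symmetric alternative.
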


We prove that the results of Theorem \ref{rcsrcKmnthm} remain true for the list colouring analogues.

\begin{thm}\label{rcsrclKmnthm}
Let $1\le m\le n$. Then 
\begin{enumerate}
\item[(a)]
\[
rc^\ell(K_{m,n})=
\left\{
\begin{array}{ll}
n, & \textup{\emph{if }}m=1,\\
\min(\lceil\!\sqrt[m]{n}\,\rceil,4), & \textup{\emph{if }}m\ge 2.
\end{array}
\right.
\]
\item[(b)] $src^\ell(K_{m,n})=\lceil\!\sqrt[m]{n}\,\rceil$.
\end{enumerate}
\end{thm}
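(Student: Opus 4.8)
The plan is to mirror the structure of the proof of Theorem~\ref{rcsrcKmnthm} from \cite{CJMZ08}, upgrading each argument to the list setting. By (\ref{ineqs}) we have $rc^\ell(K_{m,n})\ge rc(K_{m,n})$ and $src^\ell(K_{m,n})\ge src(K_{m,n})$, so in both parts only the upper bounds require work; the lower bounds follow from Theorem~\ref{rcsrcKmnthm}. The case $m=1$ is immediate, since $K_{1,n}$ is a tree and Theorem~\ref{charthm}(g) gives $rc^\ell(K_{1,n})=e(K_{1,n})=n$.

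For part (b) with $m\ge 2$, let $r=\lceil\sqrt[m]{n}\,\rceil$, so that $r^m\ge n$, and let $L$ be an $r$-edge-list assignment of $K_{m,n}$. The key observation from the non-list case is that an edge-colouring of $K_{m,n}$ is strongly rainbow connected precisely when every $v_j$ has a rainbow-coloured star to $U$ (automatic once $r\ge m$, or handled directly when $r<m$) and, more importantly, the vectors $\vec v_1,\dots,\vec v_n$ are pairwise distinct: if $\vec v_j=\vec v_k$ then the geodesic $v_j u_i v_k$ fails for every $i$. So I would reduce to the following statement: given lists $L(u_iv_j)$ each of size $r$, one can choose $c(u_iv_j)\in L(u_iv_j)$ so that the $n$ resulting vectors $\vec v_j\in\prod_{i=1}^m L(u_iv_j)$ are pairwise distinct (and each has distinct coordinates when $r\ge m$). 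Since $\vec v_j$ ranges over a product of $m$ sets each of size $r$, there are at least $r^m\ge n$ candidate vectors for each $j$; I would assign them greedily one vertex at a time — when choosing $\vec v_j$, at most $j-1\le n-1\le r^m-1$ vectors are forbidden, so a valid choice remains. When $r\ge m$ one must additionally ensure the chosen vector has distinct coordinates, which only removes the "non-injective" vectors; as long as $r^m$ minus the number of non-injective vectors is still at least $n$ the greedy step survives, and this is exactly the inequality that makes the $r=\lceil\sqrt[m]{n}\,\rceil$ bound tight in the non-list case, so it transfers. (For the borderline where $r<m$, a geodesic $v_ju_iv_k$ has length $2$ so only requires two distinct colours, and one should also check short geodesics within $U\cup\{v_j\}$; these are the same easy checks as in \cite{CJMZ08}.)

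For part (a) with $m\ge 2$, if $\lceil\sqrt[m]{n}\,\rceil\le 4$ then $rc^\ell(K_{m,n})\le src^\ell(K_{m,n})=\lceil\sqrt[m]{n}\,\rceil$ by (\ref{ineqs}) and part (b), and the reverse inequality is Theorem~\ref{rcsrcKmnthm}(a); so the only remaining case is $\lceil\sqrt[m]{n}\,\rceil\ge 5$, where we must show $rc^\ell(K_{m,n})\le 4$. Here I would replicate the Chartrand et al.\ colouring that achieves $rc=4$: the point is that for rainbow \emph{connection} (not geodesics) we are allowed longer paths, so instead of needing the $\vec v_j$ all distinct, it suffices that the $v_j$'s split into a bounded number of "types" so that any two vertices in the same type are joined by a rainbow path of length $4$ routed through two intermediate $v$'s, and any two in different types by a path of length $2$ or $3$. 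Concretely, with $4$ colours available in every list, reserve (via a greedy choice from each list, possible since $4\ge 2$) enough structure on the edges at $u_1,u_2$ to get a rainbow path $v_j u_1 v' u_2 v_k$ whenever $c(u_1v_j)\ne c(u_1v')$, $c(u_1v')\ne c(v'u_2)$, etc.; the counting needed to guarantee such a $v'$ exists is the same as in the non-list proof, with "list of size $4$" playing the role of "palette of size $4$", since at each edge we only ever forbid a bounded number (at most $3$) of colours.

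\textbf{Main obstacle.} I expect the delicate point to be the greedy vector-assignment in part (b) at the exact threshold: one must confirm that even after discarding the non-injective vectors (when $r\ge m$) the count $r^m-(\text{bad vectors})$ stays $\ge n$ for all $j\le n$, i.e.\ that the extremal inequality underlying $src(K_{m,n})=\lceil\sqrt[m]{n}\,\rceil$ is not spoiled by lists — intuitively it is not, because lists only change \emph{which} $r$ colours are available at each edge, never how many, and the whole argument is counting-based rather than using a fixed global palette. A secondary nuisance is bookkeeping the various short geodesics (within $U$, and of the form $u_iu_{i'}$ if any such edges are considered — here $K_{m,n}$ is bipartite so $U$ is independent, simplifying matters) and the small cases $r<m$; these are routine but need to be stated carefully to match \cite{CJMZ08}. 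I would organise the write-up as: (i) lower bounds by citation; (ii) $m=1$ by Theorem~\ref{charthm}(g); (iii) part (b) upper bound via the greedy distinct-vectors lemma; (iv) part (a) upper bound, reducing to the case $\lceil\sqrt[m]{n}\,\rceil\ge 5$ and adapting the length-$4$ construction.
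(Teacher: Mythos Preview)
Your overall structure is right --- lower bounds by citation, part (b) via greedy distinct vectors, part (a) via a length-$4$ detour --- and this is exactly how the paper proceeds. But the step you flag as the ``main obstacle'' in part (b) is a genuine gap, not a bookkeeping nuisance.

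You propose that, when $r\ge m$, every chosen vector $\vec v_j$ should have all $m$ coordinates distinct, and hope that after discarding non-injective vectors the count still reaches $n$. It does not. Take $m=3$, $n=27$, so $r=\lceil 27^{1/3}\rceil=3$, and give every edge the same list $\{1,2,3\}$. There are only $3!=6$ injective vectors in $\{1,2,3\}^3$, so you cannot pick $27$ distinct injective ones. (Already $m=2$, $n=4$, $r=2$ fails: only the two injective vectors $(1,2),(2,1)$ are available but four are needed.) The point is that strong rainbow connectedness does \emph{not} require each star at $v_j$ to be fully rainbow; for a pair $u_i,u_p\in U$ you only need \emph{some} index $j$ with $\vec v_{ji}\ne\vec v_{jp}$.

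The paper replaces your injectivity demand by a much weaker triangular condition on only the first $m$ vectors: for $1\le j\le m$ require $\vec v_{j,j+1},\dots,\vec v_{j,m}$ to differ from $\vec v_{j,j}$, and for $j>m$ require nothing beyond pairwise distinctness of the $\vec v_j$. Then the pair $u_i,u_p$ with $i<p$ is served by the path $u_iv_iu_p$, and the greedy count for $\vec v_j$ becomes $r^j(r-1)^{m-j}-(j-1)\ge r^j-j+1\ge 1$ for $j\le m$ and $r^m-(j-1)\ge r^m-n+1\ge 1$ for $j>m$, both valid for every $r\ge 2$. This also dissolves your separate worry about $r<m$: the triangular condition never forbids more than one colour per coordinate, so it is satisfiable regardless of how $r$ compares to $m$.

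Your sketch of part (a) is in the right spirit but too vague to be a proof. The paper's concrete construction is: apply the part (b) colouring to the sub-bipartite graph on $U$ and the first $N=3^m$ vertices of $V$ (the lists have size $4\ge\lceil N^{1/m}\rceil=3$, so this succeeds and in particular yields $\vec v_{11}\ne\vec v_{12}$), and for each remaining $v_j$ choose $\vec v_{j1},\vec v_{j2}$ so that $\vec v_{11},\vec v_{12},\vec v_{j1},\vec v_{j2}$ are four distinct colours --- possible precisely because each list has size $4$. Then $v_ju_1v_1u_2v_q$ is the required rainbow length-$4$ path whenever $\vec v_j=\vec v_q$.
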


\begin{proof}
By Theorem \ref{charthm}(g), we have $rc^\ell(K_{1,n})=src^\ell(K_{1,n})=n$. Let $2\le m\le n$.\\[1ex]
\indent(b) By Theorem \ref{rcsrcKmnthm}(b) and (\ref{ineqs}), it suffices to prove $src^\ell(K_{m,n})\le\lceil\!\sqrt[m]{n}\,\rceil$. Let $r=\lceil\!\sqrt[m]{n}\,\rceil\ge 2$, and $L$ be an $r$-edge-list assignment of $K_{m,n}$. We construct an $L$-edge-colouring by choosing the associated vectors $\vec{v}_j$, for $1\le j\le n$, which satisfy the following properties.
\begin{enumerate}
\item[(i)] $\vec{v}_1,\dots,\vec{v}_n$ are distinct.
\item[(ii)] $\vec{v}_{j,j+1},\dots,\vec{v}_{jm}$ are different from $\vec{v}_{jj}$, for all $1\le j\le m$.
\end{enumerate}
Observe that without the properties (i) and (ii), there are at least $r^m$ possibilities for each $\vec{v}_j$, which lie in the $m$-dimensional grid $L(u_1v_j)\times\cdots\times L(u_mv_j)$. We first choose $\vec{v}_1,\dots,\vec{v}_m$. For $\vec{v}_1$, let $\vec{v}_{1i}\in L(u_iv_1)$ for $1\le i\le m$ so that $\vec{v}_{12},\dots,\vec{v}_{1m}$ are different from $\vec{v}_{11}$. Now, suppose for some $2\le j\le m$, we have chosen distinct $\vec{v}_1,\dots,\vec{v}_{j-1}$ such that $\vec{v}_{hi}\in L(u_iv_h)$, and $\vec{v}_{h,h+1},\dots,\vec{v}_{hm}$ are different from $\vec{v}_{hh}$, for all $1\le h\le j-1$ and $1\le i\le m$. We choose $\vec{v}_j$. Let
$\vec{v}_{ji}\in L(u_iv_j)$ for $1\le i\le m$ so that $\vec{v}_{j,j+1},\dots,\vec{v}_{jm}$ are different from $\vec{v}_{jj}$. We may furthermore choose $\vec{v}_j$ in this way so that $\vec{v}_1,\dots,\vec{v}_{j-1}$ are different from $\vec{v}_j$, since there are at least $r^j(r-1)^{m-j}-(j-1)\ge r^j-j+1\ge 1$ possible choices for $\vec{v}_j$. Continuing inductively, we have chosen the vectors $\vec{v}_1,\dots,\vec{v}_m$.

Now, we choose $\vec{v}_{m+1},\dots,\vec{v}_n$. Suppose for some $m+1\le j\le n$, we have chosen distinct $\vec{v}_1,\dots,\vec{v}_{j-1}$ such that $\vec{v}_{hi}\in L(u_iv_h)$ for all $1\le h\le j-1$ and $1\le i\le m$, with $\vec{v}_1,\dots,\vec{v}_m$ as above. There are at least $r^m-(j-1)\ge r^m-n+1\ge 1$ possible choices for $\vec{v}_j$ such that, $\vec{v}_{ji}\in L(u_iv_j)$ for all $1\le i\le m$, and $\vec{v}_1,\dots,\vec{v}_{j-1}$ are different from $\vec{v}_j$. Continuing inductively, we have chosen the vectors $\vec{v}_1,\dots,\vec{v}_n$, which satisfy the properties (i) and (ii).

For this $L$-edge-colouring of $K_{m,n}$, if $u_i,u_p\in U$ where $1\le i<p\le m$, then $u_iv_iu_p$ is a rainbow path since $\vec{v}_{ii}\neq\vec{v}_{ip}$ by (ii). If $v_j,v_q\in V$ where $1\le j<q\le n$, then since $\vec{v}_j\neq\vec{v}_q$ by (i), we have $\vec{v}_{ji}\neq\vec{v}_{qi}$ for some $1\le i\le m$, and $v_ju_iv_q$ is a rainbow path. Hence, we have a strongly rainbow connected $L$-edge-colouring of $K_{m,n}$, and $src^\ell(K_{m,n})\le r=\lceil\!\sqrt[m]{n}\,\rceil$.\\[1ex]
\indent(a) By Theorem \ref{rcsrcKmnthm}(a) and (\ref{ineqs}), it suffices to prove $rc^\ell(K_{m,n})\le\min(\lceil\!\sqrt[m]{n}\,\rceil,4)$. By (b) and (\ref{ineqs}), we have $rc^\ell(K_{m,n})\le src^\ell(K_{m,n})\le\lceil\!\sqrt[m]{n}\,\rceil$. We are done if $\lceil\!\sqrt[m]{n}\,\rceil\le 3$, or equivalently, $m\le n\le 3^m$. Now let $\lceil\!\sqrt[m]{n}\,\rceil\ge 4$, or equivalently, $n\ge 3^m+1$. We prove $rc^\ell(K_{m,n})\le 4$. Let $L$ be a $4$-edge-list assignment of $K_{m,n}$. We construct an $L$-edge-colouring by choosing the associated vectors $\vec{v}_j$, for $1\le j\le n$. Let $N=3^m$, and $V'=\{v_1,\dots,v_N\}$. We take the strongly rainbow connected $L$-edge-colouring of $K_{m,N}$ with bipartition $(U,V')$ as described in (b). We have $\vec{v}_{11}\neq\vec{v}_{12}$ from (ii). Let $\vec{v}_{ji}\in L(u_iv_j)$ be such that $\vec{v}_{11},\vec{v}_{12},\vec{v}_{j1},\vec{v}_{j2}$ are distinct, for all $1\le i\le m$ and $N+1\le j\le n$.

For this $L$-edge-colouring of $K_{m,n}$, let $v_j,v_q\in V$ where $1\le j\le n$ and $N+1\le q\le n$. If $\vec{v}_j\neq\vec{v}_q$, then $\vec{v}_{ji}\neq\vec{v}_{qi}$ for some $1\le i\le m$, and $v_ju_iv_q$ is a rainbow path. Otherwise, we have $\vec{v}_j=\vec{v}_q$, and $v_j\neq v_1$ since $\vec{v}_{11}\neq \vec{v}_{q1}$. The path $v_ju_1v_1u_2v_q$ is rainbow, since the four colours of this path are $\vec{v}_{j1},\vec{v}_{11},\vec{v}_{12}$ and $\vec{v}_{q2}=\vec{v}_{j2}$. Hence, we have a rainbow connected $L$-edge-colouring of $K_{m,n}$, and $rc^\ell(K_{m,n})\le 4$.
\end{proof}
Next, we consider complete multipartite graphs. Let $t\ge 3$, and $K_{n_1,\dots,n_t}$ denote the complete $t$-partite graph with class sizes $1\le n_1\le\cdots\le n_t$. Let $m=\sum_{i=1}^{t-1}n_i$ and $n=n_t$. In this way, we will consider the relation between the graphs $K_{n_1,\dots,n_t}$ and $K_{m,n}$.

The rainbow connection and strong rainbow connection numbers of $K_{n_1,\dots,n_t}$ have been determined by Chartrand et al.~\cite{CJMZ08}, as follows.
\begin{thm}\label{rcsrcmpthm}\textup{\cite{CJMZ08}}
Let $t\ge 3$ and $1\le n_1\le\cdots\le n_t$. Let $m=\sum_{i=1}^{t-1}n_i$ and $n=n_t$. Then 
\begin{enumerate}
\item[(a)]
\[
rc(K_{n_1,\dots,n_t})=
\left\{
\begin{array}{ll}
1, & \textup{\emph{if }}n_t=1,\\
2, & \textup{\emph{if }}n_t\ge 2\textup{\emph{ and }}m>n,\\
\min(\lceil\!\sqrt[m]{n}\,\rceil,3), & \textup{\emph{if }}m\le n.
\end{array}
\right.
\]
\item[(b)]
\[
src(K_{n_1,\dots,n_t})=
\left\{
\begin{array}{ll}
1, & \textup{\emph{if }}n_t=1,\\
2, & \textup{\emph{if }}n_t\ge 2\textup{\emph{ and }}m>n,\\
\lceil\!\sqrt[m]{n}\,\rceil, & \textup{\emph{if }}m\le n.
\end{array}
\right.
\]
\end{enumerate}
\end{thm}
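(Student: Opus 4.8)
The plan is to decompose $G=K_{n_1,\dots,n_t}$ as the complete bipartite graph between $W=V_1\cup\dots\cup V_{t-1}$ (of size $m$) and $V_t$ (of size $n$), together with all edges lying inside $W$, where $V_1,\dots,V_t$ are the classes of $G$, and then to reduce as far as possible to the complete bipartite case treated in Theorem \ref{rcsrcKmnthm}. Throughout I would use that the non-adjacent pairs of $G$ are exactly the pairs inside a common class, that every vertex of $W$ is adjacent to every vertex of $V_t$, and that $\textup{diam}(G)=2$ whenever $G$ is not complete. If $n_t=1$ then $G=K_t$ is complete and $rc(G)=src(G)=1$ by Theorem \ref{charthm}(e); in all other cases $n_t\ge2$, so $G$ is not complete and $rc(G),src(G)\ge\textup{diam}(G)=2$.

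Next I would handle the case $m\le n$, setting $r=\lceil\sqrt[m]{n}\,\rceil\ge2$. For the lower bound $src(G)\ge r$: a rainbow geodesic between $x,x'\in V_t$ has length $d_G(x,x')=2$, hence is of the form $xwx'$ with $w\in W$ and $c(xw)\ne c(wx')$, so the colour vectors $(c(wx))_{w\in W}$ over the $n$ vertices $x\in V_t$ must be pairwise distinct, giving $r^m\ge n$; running the same argument on two colours (where a rainbow path has length $\le2$) shows $rc(G)\ge3$ when $r\ge3$. For $src(G)\le r$ I would apply a strongly rainbow connected $r$-edge-colouring of $K_{m,n}$ with parts $W,V_t$ (Theorem \ref{rcsrcKmnthm}(b)) to the $W$--$V_t$ edges and colour inside $W$ arbitrarily: a rainbow geodesic of $K_{m,n}$ between two vertices of $V_t$ (resp.\ of $W$) runs through $W$ (resp.\ through $V_t$), has length $2$, and so is still a rainbow geodesic in $G$; hence $src(G)=r$ and $rc(G)\le r$. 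For $rc(G)\le3$ I would pick $u_1\in V_1$ and $u_2\in V_2$, set $c(u_1u_2)=3$, $c(xu_1)=1$ and $c(xu_2)=2$ for all $x\in V_t$, choose the other $W$--$V_t$ edges so that within each class of $W$ the vertices receive distinct $\{1,2\}$-valued colour vectors indexed by $V_t$ (possible, since a class has at most $n\le 2^n$ vertices), and colour the rest arbitrarily; then $xu_1u_2x'$ is rainbow (colours $1,3,2$) for all $x,x'\in V_t$, and for $w,w'$ in one class of $W$ some $x$ has $c(wx)\ne c(w'x)$, giving the rainbow path $wxw'$. Hence $rc(G)\le\min(r,3)$, so with the lower bounds $rc(G)=\min(r,3)$.

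For the case $m>n$ (with $n_t\ge2$) the claim is $rc(G)=src(G)=2$, so only $src(G)\le2$ needs an argument, and here the bipartite reduction is too weak. Instead I would choose an $n\times m$ matrix $M$ over $\{1,2\}$ with rows indexed by $V_t$ and columns by $W$, so that (a) its $n$ rows are pairwise distinct and (b) within each class of $W$ its columns are pairwise distinct, then set $c(wx)=M_{x,w}$ and colour inside $W$ arbitrarily. Such an $M$ exists: since $m>n$, take $n$ of the columns and let the $c$-th of them be $2$ in row $c$ and $1$ elsewhere, which already forces the $n$ rows and these $n$ columns to be pairwise distinct, and then fill in the remaining $m-n$ columns greedily, keeping columns in a common class distinct (each class has at most $n\le 2^n$ vertices). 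Then any two non-adjacent vertices of $G$ are joined by a rainbow path of length $2$: through $W$ for a pair in $V_t$ by (a), and through $V_t$ for a pair in a class of $W$ by (b). So $src(G)\le2$, completing the proof.

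The routine parts — that the displayed paths are rainbow and that the colour-vector families and the matrix $M$ exist (elementary counting, using only $r\ge2$ and $n_i\le n_t\le 2^{n_t}$) — should be straightforward. The two steps that need a genuine idea are the direct construction when $m>n$, since one cannot simply quote $src(K_{m,n})$ (that value can be larger than $2$), and the bound $rc(G)\le3$ when $m\le n$: the point is that a single edge inside $W$ coloured with a third colour supplies a rainbow $xu_1u_2x'$ path between any $x,x'\in V_t$, and it is precisely this that makes $rc$ of a complete multipartite graph smaller than the value $rc(K_{m,n})=\min(\lceil\sqrt[m]{n}\,\rceil,4)$.
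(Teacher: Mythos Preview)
This theorem is quoted from \cite{CJMZ08} and is not proved in the present paper, so there is no in-paper argument to compare against directly. Your proof is correct.

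It is still worth contrasting your argument with the one the paper gives for the list analogue, Theorem~\ref{rcsrclmpthm}, whose upper-bound constructions specialise (via constant lists) to upper bounds here; the lower bounds the paper simply imports from \cite{CJMZ08}, whereas you supply them. For $m\le n$ the two upper-bound approaches coincide in spirit: reduce to the spanning $K_{m,n}$ for $src$, and for $rc\le 3$ exploit a single edge $u_1u_2$ inside $W$ carrying a third colour so that $xu_1u_2x'$ is rainbow for all $x,x'\in V_t$. For $m>n$ the approaches genuinely differ. The paper (following \cite{CJMZ08}) groups the $t$ classes into two blocks of sizes $a\le b$ with $b\le 2^a$ and then quotes $src(K_{a,b})=2$; the existence of such a balanced bipartition is a separate lemma taken on faith from \cite{CJMZ08}. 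Your direct matrix construction---place $n$ indicator columns to force the rows of $M$ to be distinct, then complete greedily so that columns within each class of $W$ stay distinct (possible since $n_i\le n\le 2^n$)---sidesteps that lemma entirely and is more self-contained, at the price of being specific to the non-list setting.
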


We prove that the results of Theorem \ref{rcsrcmpthm} remain true for the list colouring analogues.
\begin{thm}\label{rcsrclmpthm}
Let $t\ge 3$ and $1\le n_1\le\cdots\le n_t$. Let $m=\sum_{i=1}^{t-1}n_i$ and $n=n_t$. Then 
\begin{enumerate}
\item[(a)]
\[
rc^\ell(K_{n_1,\dots,n_t})=
\left\{
\begin{array}{ll}
1, & \textup{\emph{if }}n_t=1,\\
2, & \textup{\emph{if }}n_t\ge 2\textup{\emph{ and }}m>n,\\
\min(\lceil\!\sqrt[m]{n}\,\rceil,3), & \textup{\emph{if }}m\le n.
\end{array}
\right.
\]
\item[(b)]
\[
src^\ell(K_{n_1,\dots,n_t})=
\left\{
\begin{array}{ll}
1, & \textup{\emph{if }}n_t=1,\\
2, & \textup{\emph{if }}n_t\ge 2\textup{\emph{ and }}m>n,\\
\lceil\!\sqrt[m]{n}\,\rceil, & \textup{\emph{if }}m\le n.
\end{array}
\right.
\]
\end{enumerate}
\end{thm}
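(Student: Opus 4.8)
The plan is to prove Theorem~\ref{rcsrclmpthm} by reducing to Theorem~\ref{rcsrclKmnthm}, exactly as the non-list versions reduce (essentially) to the complete bipartite case, and then handling the remaining small-number-of-colours cases by direct list-colouring arguments. Throughout write $G=K_{n_1,\dots,n_t}$, let $A_1,\dots,A_t$ be the classes with $|A_i|=n_i$, and set $B=A_1\cup\dots\cup A_{t-1}$ (so $|B|=m$) and $A=A_t$ (so $|A|=n$); then $G$ contains $K_{m,n}$ as a spanning subgraph on bipartition $(B,A)$, but with all edges inside $B$ added back in. The lower bounds in both (a) and (b) follow immediately from Theorem~\ref{rcsrcmpthm} together with the inequalities~(\ref{ineqs}), so the whole content is in the upper bounds.

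\textbf{Upper bounds for (b).} If $n_t=1$ then $G$ is complete and $src^\ell(G)=1$ by Theorem~\ref{charthm}(e). If $m\le n$, let $r=\lceil\sqrt[m]{n}\,\rceil\ge 2$ and let $L$ be an $r$-edge-list assignment of $G$. The idea is to re-run the vector argument from the proof of Theorem~\ref{rcsrclKmnthm}(b): colour the edges between $B$ and $A$ by choosing associated vectors $\vec v_j$ (indexed now by the vertices of $B$, $|B|=m$) satisfying properties (i) distinctness and (ii) the ``within-$B$'' separation condition that guarantees a rainbow path $uv_iu'$ of length~$2$ between any two vertices of $B$; the counting estimates $r^j(r-1)^{m-j}-(j-1)\ge 1$ and $r^m-(j-1)\ge 1$ go through verbatim since $n\le r^m$. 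Then colour the edges inside $B$ arbitrarily from their lists. Any two vertices in $A$ are joined by a rainbow path of length~$2$ through $B$ by (i); any vertex of $A$ and any vertex of $B$ are adjacent; and any two vertices of $B$ in distinct classes are adjacent, while any two in the same class are joined by the length-$2$ rainbow path through $A$ given by (ii). In every case the rainbow path used is a geodesic, so $src^\ell(G)\le r$. If instead $m>n$ and $n_t\ge 2$, then $\mathrm{diam}(G)=2$, so by Theorem~\ref{charthm}(f) (more precisely its proof, or the fact $src(G)=2$) it suffices to exhibit, for any $2$-edge-list assignment $L$, an $L$-edge-colouring that is strongly rainbow connected; here one colours so that within each class $A_i$ ($i<t$) the edges to any fixed vertex realise both colours — a pigeonhole/Hall-type argument using $m>n$ — mirroring the $src(G)=2$ construction of \cite{CJMZ08}.

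\textbf{Upper bounds for (a).} Again $n_t=1$ gives a complete graph. For the case $m\le n$ we must show $rc^\ell(G)\le\min(\lceil\sqrt[m]{n}\,\rceil,3)$: by (b) and~(\ref{ineqs}) we already have $rc^\ell(G)\le\lceil\sqrt[m]{n}\,\rceil$, so we are done unless $\lceil\sqrt[m]{n}\,\rceil\ge 4$, i.e.\ $n\ge 3^m+1$, in which case we must produce, from any $3$-edge-list assignment, a rainbow connected colouring. The trick parallels the proof of Theorem~\ref{rcsrclKmnthm}(a) but with $N=3^{\,m}$ (one fewer colour, since diam is now $2$ and we have a full class $B$ to route through): first colour $K_{m,N}$ on $(B,\{v_1,\dots,v_N\})$ strongly rainbow with $3$ colours by (b), then for the remaining $v_j\in A$ choose $\vec v_{ji}$ so that $\vec v_{11},\vec v_{12},\vec v_{j1},\vec v_{j2}$ are distinct (possible with lists of size~$3$? — this needs care, see below), giving for any $v_j$ with $\vec v_j=\vec v_q$ the rainbow path $v_ju_1v_1u_2v_q$; the edges inside $B$ are coloured arbitrarily. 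The case $m>n$, $n_t\ge 2$ is covered by Theorem~\ref{charthm}(f) once we know $src^\ell(G)=2$ from part~(b), whence $rc^\ell(G)=2$.

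\textbf{Main obstacle.} The delicate point is the last construction in~(a): with $3$-element lists one cannot in general make four prescribed colours $\vec v_{11},\vec v_{12},\vec v_{j1},\vec v_{j2}$ pairwise distinct (that needs lists of size~$4$). So the honest route is different: since $n\ge 3^m+1$ forces $m=1$ or $m=2$ only matters up to a point, but actually for $m\ge 2$ one has $3^m+1$ vertices in $A$ and must use the \emph{extra class structure} — the within-$B$ edges and the third class — to create length-$\le 2$ rainbow routes without a fourth colour, exploiting that $B$ is a clique when $t\ge 3$ with two classes inside it. Concretely, when $m\ge 2$ one writes $B=A_1\cup\dots\cup A_{t-1}$ with at least two parts, picks $u\in A_1$, $u'\in A_2$ with $uu'\in E(G)$, and for collisions $\vec v_j=\vec v_q$ uses the path $v_j\,u\,u'\,v_q$ of length~$3$; ensuring this is rainbow needs only that $c(v_ju),c(uu'),c(u'v_q)$ are distinct, a $3$-list condition one can meet greedily. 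Verifying that this repair handles \emph{all} vertex pairs simultaneously — in particular pairs both lying in the ``large'' index range $j>N$ — and that no geodesic-length issues intrude (for $rc$ length is unconstrained, which is what makes $3$ colours enough here) is the real work; I expect this bookkeeping, rather than any single inequality, to be the crux.
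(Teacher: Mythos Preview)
Your outline largely matches the paper's proof for the cases $n_t=1$ and $m\le n$. In particular, your repair in the ``Main obstacle'' paragraph \emph{is} the paper's argument for $rc^\ell(G)\le 3$: pick $u_1,u_2$ in different small classes of $B$, fix $\alpha\in L(u_1u_2)$ \emph{first}, and then for each overflow vertex $v_j$ choose $\vec v_{j1}\in L(u_1v_j)\setminus\{\alpha\}$ and $\vec v_{j2}\in L(u_2v_j)\setminus\{\alpha,\vec v_{j1}\}$. Your worry about pairs $v_j,v_q$ both in the large index range dissolves once you observe that if $\vec v_j=\vec v_q$ then the path $v_ju_1u_2v_q$ carries the colours $\vec v_{j1},\alpha,\vec v_{q2}=\vec v_{j2}$, which are distinct by construction; no further bookkeeping is needed. (The paper takes $N=2^m$ rather than your $3^m$, but either choice works since only the range $n>3^m$ actually requires the extra argument.)

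The genuine gap is the case $m>n$ in part~(b). Your ``pigeonhole/Hall-type argument using $m>n$'' is not a proof, and it is not clear how to turn it into one directly from arbitrary $2$-lists. The paper uses a reduction you have not mentioned: when $t\ge 3$, $n_t\ge 2$ and $m>n$, one can group the $t$ classes of $G$ into two parts to obtain a spanning complete bipartite subgraph $H=K_{a,b}$ with $a\le b$ and $b\le 2^a$ (this combinatorial fact is \cite{CJMZ08}, Theorem~2.5), so that $\lceil\sqrt[a]{b}\,\rceil=2$. Theorem~\ref{rcsrclKmnthm}(b) then furnishes a strongly rainbow connected $L$-edge-colouring of $H$ from any $2$-edge-list assignment, and extending arbitrarily to the remaining edges of $G$ gives a strongly rainbow connected colouring of $G$: any two vertices in the same class of $G$ lie on the same side of $H$ and hence are joined by a rainbow $2$-path through $H$, while vertices in different classes of $G$ are already adjacent. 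This balanced-bipartition step is the missing idea in your sketch.
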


\begin{proof}
Write $G=K_{n_1,\dots,n_t}$. If $n_t=1$, then $G=K_t$, and Theorem \ref{charthm}(e) gives $rc^\ell(G)=src^\ell(G)=1$. 

Now, let $n_t\ge 2$. By Theorem \ref{rcsrcmpthm} and (\ref{ineqs}), it suffices to prove $src^\ell(G)\le 2$ if $m>n$; and $rc^\ell(G)\le \min(\lceil\!\sqrt[m]{n}\,\rceil,3)$  and $src^\ell(G)\le \lceil\!\sqrt[m]{n}\,\rceil$, if $m\le n$. We obtain a spanning complete bipartite subgraph $H\subset G$ as follows. If $m\le n$, we let $H=K_{m,n}$. For $m>n$, we let $H=K_{a,b}$ such that, each class is a union of some of the classes of $G$, and $a\le b$ such that $b-a$ is minimum. In \cite{CJMZ08}, Theorem 2.5, it was proved that $b\le 2^a$, or equivalently, $\lceil\sqrt[a]{b}\,\rceil=2$.

Let $r=2$ if $m>n$, and $r=\lceil\!\sqrt[m]{n}\,\rceil$ if $m\le n$. Let $L$ be an $r$-edge-list assignment of $G$. By restricting to the subgraph $H$, we have an $r$-edge-list assignment $L'$ of $H$. Since $\lceil\sqrt[a]{b}\,\rceil=2$ when $m>n$, by Theorem \ref{rcsrclKmnthm}(b), there exists a strongly rainbow connected $L'$-edge-colouring $c'$ of $H$. Let $c$ be an $L$-edge-colouring of $G$ such that the restriction of $c$ to $H$ is $c'$. Then $c$ is a strongly rainbow connected colouring of $G$, since any two vertices of $G$ in the same class are connected by a rainbow path of length $2$ under $c'$, and hence under $c$. Thus, we have $rc^\ell(G)\le src^\ell(G)\le r$.

It remains to prove that $rc^\ell(G)\le 3$ for $m\le n$ and $\lceil\!\sqrt[m]{n}\,\rceil\ge 3$. Equivalently, $n\ge 2^m+1$. Let $L$ be a $3$-edge-list assignment of $G$. We construct an $L$-edge-colouring of $G$. Let $L'$ be the restriction of $L$ to the subgraph $K_{m,n}$ of $G$, with classes $U=\{u_1,\dots,u_m\}$ and $V=\{v_1,\dots,v_n\}$. Since $t\ge 3$, we may assume $u_1$ and $u_2$ are in different classes of $G$. Let $N=2^m$, and $V'=\{v_1,\dots,v_N\}$. We take the strongly rainbow connected $L'$-edge-colouring of $K_{m,N}$ with bipartition $(U,V')$ as described in Theorem \ref{rcsrclKmnthm}(b). Choose $\alpha\in L(u_1u_2)$ for $u_1u_2$, and $\vec{v}_{ji}\in L(u_iv_j)$ such that $\vec{v}_{j1},\vec{v}_{j2},\alpha$ are distinct, for all $1\le i\le m$ and $N+1\le j\le n$. Choose arbitrary colours for the remaining edges of $G$, from their lists under $L$.

For this $L$-edge-colouring of $G$, let $v_j,v_q\in V$ where $1\le j\le n$ and $N+1\le q\le n$. If $\vec{v}_j\neq\vec{v}_q$, then $\vec{v}_{ji}\neq\vec{v}_{qi}$ for some $1\le i\le m$, and $v_ju_iv_q$ is a rainbow path. Otherwise, we have $\vec{v}_j=\vec{v}_q$. The path $v_ju_1u_2v_q$ is rainbow, since the three colours of this path are $\vec{v}_{j1},\alpha$ and $\vec{v}_{q2}=\vec{v}_{j2}$. Hence, we have a rainbow connected $L$-edge-colouring of $G$, and $rc^\ell(G)\le 3$.
\end{proof}

We conclude this section by considering the Petersen graph ${\textsf P}_{10}$, with the drawing as shown in Figure 2. Chartrand et al.~\cite{CJMZ08} computed $rc({\textsf P}_{10})$ and $src({\textsf P}_{10})$.
\begin{thm}\label{CJMZPetthm}\textup{\cite{CJMZ08}}
$rc(\textup{\textsf P}_{10})=3$ and $src(\textup{\textsf P}_{10})=4$. 
\end{thm}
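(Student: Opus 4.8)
The plan is to exploit two structural facts about $\textsf{P}_{10}$: it is triangle-free (its girth is $5$), and it is strongly regular with parameter $\mu=1$, so that any two non-adjacent vertices have a \emph{unique} common neighbour, while $\mathrm{diam}(\textsf{P}_{10})=2$. Together these say that between any two vertices at distance $2$ there is exactly one geodesic, of length $2$, and that any two neighbours of a common vertex are themselves non-adjacent. I will use this to reduce both the strong-rainbow and the $2$-colour rainbow conditions to the condition of being a proper edge-colouring.

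First I treat $src$. I claim every strongly rainbow connected colouring $c$ of $\textsf{P}_{10}$ is a proper edge-colouring. Indeed, given incident edges $uw,wv$, the vertices $u,v$ are non-adjacent by triangle-freeness, and $w$ is their unique common neighbour, so $u\text{-}w\text{-}v$ is the only geodesic joining them; being strongly rainbow connected forces $c(uw)\neq c(wv)$. Hence $src(\textsf{P}_{10})\ge\chi'(\textsf{P}_{10})$. Conversely, since every geodesic has length at most $\mathrm{diam}(\textsf{P}_{10})=2$, any proper edge-colouring makes every geodesic rainbow, so $src(\textsf{P}_{10})\le\chi'(\textsf{P}_{10})$; thus $src(\textsf{P}_{10})=\chi'(\textsf{P}_{10})$. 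As $\textsf{P}_{10}$ is the smallest snark, i.e.\ a cubic class-two graph, we have $\chi'(\textsf{P}_{10})=4$, giving $src(\textsf{P}_{10})=4$.

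Next the lower bound $rc(\textsf{P}_{10})\ge 3$. Suppose for contradiction that some $2$-edge-colouring $c$ is rainbow connected. With only two colours every rainbow path has length at most $2$, so between a distance-$2$ pair the only possible rainbow path is its unique length-$2$ geodesic, which must therefore be rainbow. By the argument of the previous paragraph this again forces $c$ to be proper, contradicting $\chi'(\textsf{P}_{10})=4$. (Alternatively, since $src(\textsf{P}_{10})=4\neq 2$, Theorem~\ref{charthm}(f) gives $rc(\textsf{P}_{10})\neq 2$, and $rc(\textsf{P}_{10})\ge\mathrm{diam}(\textsf{P}_{10})=2$ then forces $rc(\textsf{P}_{10})\ge 3$.) It remains to prove $rc(\textsf{P}_{10})\le 3$. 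For this I will exhibit an explicit $3$-edge-colouring and verify that every non-adjacent pair is joined by a rainbow path of length $2$ or $3$; using the outer-cycle/spoke/inner-pentagram description of $\textsf{P}_{10}$, and choosing the colouring to respect a reflective symmetry of the drawing, the verification reduces to a modest finite check over the $30$ distance-$2$ pairs.

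The main obstacle is precisely this upper bound $rc(\textsf{P}_{10})\le 3$: unlike $src$, it does not reduce to a clean algebraic invariant, and one must produce an actual colouring together with the pair-by-pair verification. The subtlety is that any $3$-edge-colouring necessarily has some monochromatic incident edge-pair (since $\chi'(\textsf{P}_{10})=4$), and each such pair destroys the corresponding unique length-$2$ geodesic; the colouring must therefore be arranged so that every such pair still admits a rainbow path of length $3$, and balancing these detour requirements globally is the crux of the construction.
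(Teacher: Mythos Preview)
The paper does not prove Theorem~\ref{CJMZPetthm}; it is quoted from Chartrand et al.~\cite{CJMZ08} and used as input to Theorem~\ref{Petthm}. So there is no in-paper proof to compare against.

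Your arguments for $src(\textsf{P}_{10})=4$ and $rc(\textsf{P}_{10})\ge 3$ are correct and clean. The identification $src(\textsf{P}_{10})=\chi'(\textsf{P}_{10})$ via uniqueness of geodesics in a triangle-free, $\mu=1$ strongly regular graph is the right observation; the paper itself invokes the same idea (diameter $2$ plus proper edge-colouring) in the proof of $src^\ell(\textsf{P}_{10})\le 4$ inside Theorem~\ref{Petthm}. The alternative route for $rc\ge 3$ via Theorem~\ref{charthm}(f) is also valid.

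The genuine gap is the upper bound $rc(\textsf{P}_{10})\le 3$. You describe a plan---produce a $3$-colouring, use a reflective symmetry to cut down the case-check over the $30$ distance-$2$ pairs---but you do not exhibit the colouring or carry out any of the verification. You correctly diagnose why this step is the crux: since $\chi'(\textsf{P}_{10})=4$, every $3$-edge-colouring has monochromatic incident pairs, each of which kills the unique length-$2$ geodesic between the corresponding endpoints and must be repaired by a rainbow path of length $3$. Arranging these detours consistently is precisely the work, and it is absent here. As written, this part is a promise rather than a proof; to complete it you must actually display a colouring and either check the pairs directly or reduce the check to a small number of orbits under the symmetry you invoke.
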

\indent\\[-1.05cm]
\begin{figure}[htp]
\centering
\includegraphics[width=5cm]{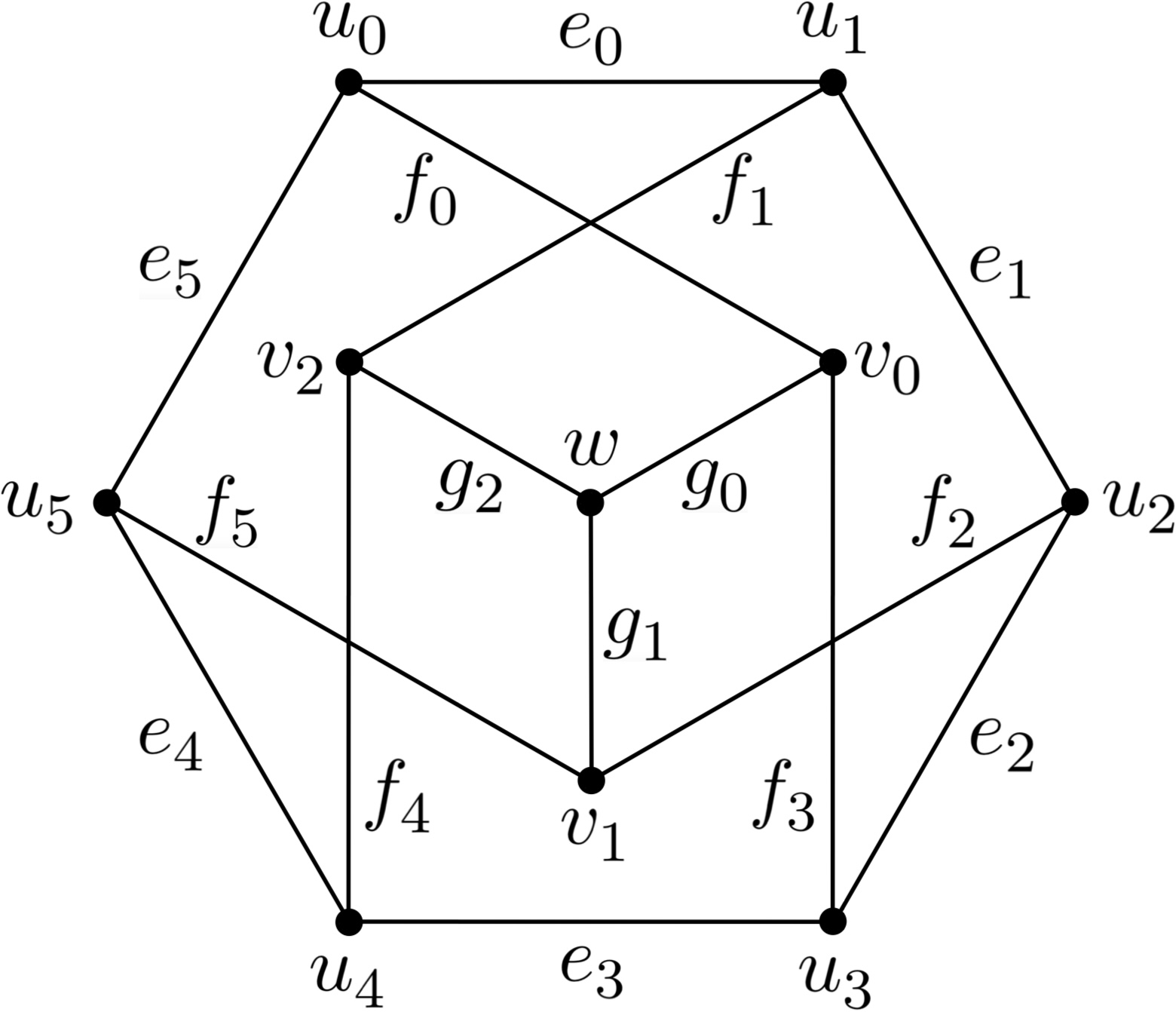}
\end{figure}
\indent\\[-0.9cm]
\begin{center}
\hspace{0.02cm}Figure 2. The Petersen graph ${\textsf P}_{10}$
\end{center}
\indent\\[-0.6cm]

We prove that the same results hold for the list colouring analogues.

\begin{thm}\label{Petthm}
$rc^\ell(\textup{\textsf P}_{10})=3$ and $src^\ell(\textup{\textsf P}_{10})=4$. 
\end{thm}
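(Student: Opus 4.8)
The plan is to establish the two equalities separately, in each case using the inequalities \eqref{ineqs} together with Theorem \ref{CJMZPetthm} for the lower bounds, so that only the upper bounds $rc^\ell(\textup{\textsf P}_{10})\le 3$ and $src^\ell(\textup{\textsf P}_{10})\le 4$ remain to be shown. For both upper bounds we must exhibit, for an \emph{arbitrary} edge-list assignment $L$ of the appropriate size, an $L$-edge-colouring that is rainbow connected (resp.\ strongly rainbow connected). We may of course assume $|L(e)|$ equals exactly $3$ (resp.\ $4$) for every edge.

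For $rc^\ell(\textup{\textsf P}_{10})\le 3$, the natural strategy is to fix a suitable spanning subgraph of $\textup{\textsf P}_{10}$ on which a rainbow-connected colouring can be built greedily, and then colour the remaining edges arbitrarily from their lists. Since $\textup{\textsf P}_{10}$ has diameter $2$ and is vertex- and edge-transitive, every pair of non-adjacent vertices is joined by several internally disjoint paths of length $2$ or $3$; the idea is to choose the colouring so that at least one such short path is rainbow for each non-adjacent pair. Concretely, I would single out the outer $5$-cycle and the five spokes (a spanning subgraph isomorphic to $W_5$ minus the hub-edges-to-one-vertex structure, i.e.\ essentially the ``sunlet''-type skeleton), colour its edges so that incident edges get distinct colours wherever forced, using the fact that with $3$ colours available per edge one always has a free colour to avoid at most two forbidden values along any greedy step, and then verify pair-by-pair (up to the automorphism group, only a couple of orbits of non-adjacent pairs need checking) that a rainbow path of length $\le 3$ survives. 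The inner $5$-cycle edges are then filled in arbitrarily. The main care is bookkeeping: making sure the greedy order never needs to avoid $3$ distinct colours at once.

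For $src^\ell(\textup{\textsf P}_{10})\le 4$, since $\textup{\textsf P}_{10}$ has diameter $2$, a strongly rainbow connected colouring is one in which, for every non-adjacent pair $x,y$, \emph{some} geodesic (length-$2$ path) between them is rainbow, i.e.\ the two edges of that path receive different colours; and for adjacent pairs the single edge is trivially rainbow. Each non-adjacent pair in $\textup{\textsf P}_{10}$ has exactly one common neighbour, so in fact there is a \emph{unique} geodesic between each such pair, which means the colouring is strongly rainbow connected if and only if no two edges incident to a common vertex and lying on a common geodesic clash — equivalently, one needs a colouring in which every path of length $2$ whose endpoints are non-adjacent is rainbow. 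Because $\textup{\textsf P}_{10}$ is triangle-free, \emph{every} path of length $2$ has non-adjacent endpoints, so the condition is simply that $c$ is a \emph{proper} edge-colouring. Hence $src^\ell(\textup{\textsf P}_{10}) = \chi'_\ell(\textup{\textsf P}_{10})$, and it suffices to show $\chi'_\ell(\textup{\textsf P}_{10})\le 4$. As $\textup{\textsf P}_{10}$ is $3$-regular with $\chi'(\textup{\textsf P}_{10})=4$, this is the statement that the Petersen graph is $4$-edge-choosable; one can invoke the known fact (the list edge colouring conjecture is verified for small graphs, or directly: a cubic graph $G$ satisfies $\chi'_\ell(G)\le 4$ whenever $\chi'(G)=4$ is not automatic, but for $\textup{\textsf P}_{10}$ it is known) or give a short direct argument via the line graph being $4$-choosable, e.g.\ by a discharging or Alon--Tarsi orientation argument on the line graph $L(\textup{\textsf P}_{10})$. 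The expected main obstacle is precisely this last point: cleanly justifying $\chi'_\ell(\textup{\textsf P}_{10})\le 4$, since it is not a completely trivial fact; I would handle it either by citing a verification of the list edge colouring conjecture for the Petersen graph, or by exhibiting an Alon--Tarsi orientation of $L(\textup{\textsf P}_{10})$ with the number of even and odd Eulerian subdigraphs differing, which (by Alon's combinatorial nullstellensatz, Theorem \ref{Alonnull}) yields $4$-choosability of $L(\textup{\textsf P}_{10})$.
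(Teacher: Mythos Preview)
Your treatment of $src^\ell(\textup{\textsf P}_{10})$ is correct and matches the paper's: both reduce the question to $\chi'_\ell(\textup{\textsf P}_{10})\le 4$. Your observation that strong rainbow connectedness is \emph{equivalent} to properness here (because $\textup{\textsf P}_{10}$ is triangle-free and every non-adjacent pair has a unique common neighbour) is in fact sharper than what the paper writes, which only uses the one direction $src^\ell\le\chi'_\ell$. The paper disposes of $\chi'_\ell(\textup{\textsf P}_{10})\le 4$ by citing Schauz's verification of the list colouring conjecture for $\textup{\textsf P}_{10}$; your proposed citation or Alon--Tarsi argument would serve the same purpose.

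For $rc^\ell(\textup{\textsf P}_{10})\le 3$, however, your plan has a genuine gap. In the pentagon--pentagram drawing, if you colour only the outer $5$-cycle and the five spokes and leave the inner edges arbitrary, then for consecutive inner vertices $b_i,b_{i+1}$ the only path of length at most $3$ avoiding inner edges is $b_ia_ia_{i+1}b_{i+1}$. Making this rainbow forces the spokes $a_ib_i$ and $a_{i+1}b_{i+1}$ to differ, so the five spokes must be properly coloured around a $5$-cycle \emph{in addition} to each spoke avoiding its two incident outer-cycle colours. In a greedy order this is three forbidden values per spoke, not two; and if the outer colouring happens to leave each spoke a single usable colour, the residual $C_5$ on the spokes need not be properly colourable. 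So ``the greedy order never needs to avoid $3$ distinct colours at once'' is simply false for this scheme.

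The paper avoids this obstruction by working in a different drawing of $\textup{\textsf P}_{10}$: an outer $6$-cycle $u_0\cdots u_5$, three inner vertices $v_0,v_1,v_2$ each joined to an antipodal pair of $u_j$'s, and a central vertex $w$ joined to $v_0,v_1,v_2$. The key structural ingredient, absent from your sketch, is that $C_6$ can be $3$-list-edge-coloured so that only \emph{opposite} edges may share a colour (equivalently, $\chi_\ell(K_{3\times 2})=3$ by Theorem~\ref{ERTthm}(b)). With this in hand, the six $u_jv_k$ edges and then the three $v_kw$ edges can each be chosen avoiding exactly two earlier colours. A short case analysis on whether the three $v_kw$ colours are distinct then finishes the argument. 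This hexagonal decomposition and the accompanying case split are where the real work lies; the pentagon-based greedy scheme does not sidestep it.
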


\begin{proof}
By Theorem \ref{CJMZPetthm} and (\ref{ineqs}), we have $rc^\ell(\textup{\textsf P}_{10})\ge 3$ and $src^\ell(\textup{\textsf P}_{10})\ge 4$. We prove the matching upper bounds.

It is well-known that $\chi'(\textup{\textsf P}_{10})=4$. A result of Schauz \cite{Sch18} implies that $\textup{\textsf P}_{10}$ satisfies the list colouring conjecture, and thus $\chi_\ell'(\textup{\textsf P}_{10})=4$. Since diam$(\textup{\textsf P}_{10})=2$, this means that a proper edge-colouring of $\textup{\textsf P}_{10}$ is strongly rainbow connected. It follows that $src^\ell(\textup{\textsf P}_{10})\le 4$.

Now, let $u_p,v_q,w$ be the vertices and $e_i,f_j,g_k$ be the edges of $\textsf P_{10}$ as shown in Figure 2, where all indices related to the $u_p,e_i,f_j$ are taken modulo $6$. Let $L$ be a $3$-edge-list assignment of $\textup{\textsf P}_{10}$. We construct an $L$-edge-colouring $c$ as follows. First, as in proof of Theorem \ref{rclcyclethm}, we may choose $c(e_i)=\alpha_i\in L(e_i)$ for $0\le i\le 5$ such that only opposite edges of the $C_6$ may possibly have the same colour. Then, we choose $c(f_j)=\beta_j\in L(f_j)$ with $\beta_j\neq\alpha_j,\alpha_{j-1}$ for $0\le j\le 5$; and $c(g_k)=\gamma_k\in L(g_k)$ with $\gamma_k\neq\beta_{2k},\beta_{2k+3}$ for $0\le k\le 2$. Now, any two vertices are connected by a rainbow path under $c$, except possibly for two of $v_0,v_1,v_2$. If $\gamma_0,\gamma_1,\gamma_2$ are distinct, then $c$ is rainbow connected. Otherwise, we consider two cases.\\[1ex]
\emph{Case 1.} Without loss of generality, $\gamma_0=\gamma_1=\gamma$, where $\gamma\neq\gamma_2$.\\[1ex]
\indent If there exists a rainbow $v_0-v_1$ path, then $c$ is rainbow connected. Otherwise, we have $\beta_2=\beta_3=\beta$ and $\beta_0=\beta_5=\delta$. Note that $\beta=\delta$, $\gamma=\alpha_1$, $\gamma=\alpha_2$ are possible. If we can update $c(f_2)\in L(f_2)$ so that $c(f_2)\neq\alpha_1,\alpha_2,\beta,\gamma$, then $v_0u_3u_2v_1$ is a rainbow $v_0-v_1$ path, and $c$ is rainbow connected. Otherwise, the only possible elements of $L(f_2)$ are $\alpha_1,\alpha_2,\beta,\gamma$. We update $c(f_2)=\alpha_1$ if possible. Otherwise, we have $L(f_2)=\{\alpha_2,\beta,\gamma\}$ and $\gamma\neq\alpha_2$, and we update $c(f_2)=\gamma$. Then $v_0u_3u_2v_1$ is a rainbow $v_0-v_1$ path. Also, $u_1u_0u_5v_1$ is a rainbow $u_1-v_1$ path, since $\alpha_0\neq\delta$. Finally, for a rainbow $u_2-w$ path, we take $u_2v_1w$ or $u_2u_3v_0w$ if $c(f_2)=\alpha_1$; and $u_2u_3v_0w$ if $c(f_2)=\gamma$. All other pairs of vertices are connected by a rainbow path.  Hence, the updated colouring $c$ is rainbow connected.\\[1ex]
\emph{Case 2.} $\gamma_0=\gamma_1=\gamma_2=\gamma$.\\[1ex]
\indent Suppose that we cannot reduce to Case 1 by updating $c(g_0)$, $c(g_1)$, $c(g_2)$. Then, we have $L(g_k)=\{\beta_{2k},\beta_{2k+3},\gamma\}$ for $0\le k\le 2$. Now, the six equalities $\beta_2=\beta_3$, $\beta_0=\beta_5$, $\beta_4=\beta_5$, $\beta_2=\beta_1$, $\beta_0=\beta_1$, $\beta_4=\beta_3$ cannot all hold simultaneously, otherwise we have $\beta_0=\beta_3$, a contradiction. We may assume that either $\beta_4\neq\beta_5$ or $\beta_2\neq\beta_1$, so that there exists a rainbow $v_1-v_2$ path of length $3$. Now, we update $c(g_0)=\beta_0$. We have either $u_0u_1v_2w$ or $u_0u_5v_1w$ is a rainbow $u_0-w$ path. All other pairs of vertices are connected by a rainbow path.  Hence, the updated colouring $c$ is rainbow connected.\\[1ex]
\indent We now have $rc^\ell(\textup{\textsf P}_{10})\le 3$. This completes the proof of Theorem \ref{Petthm}. 
\end{proof}

\section{Rainbow connection numbers with prescribed values}\label{compsect}

In this section, we consider how the various rainbow connection parameters that we have seen can attain prescribed values. In view of the inequality $rc(G) \le src(G)$ for any connected graph $G$, Chartrand et al.~\cite{CJMZ08} considered the following question: \emph{Given positive integers $a \le b$, does there exist a graph $G$ such that $rc(G) = a$ and $src(G) = b$?} They gave positive
answers for $a=b$, and $3\le a<b$ with $b\ge \frac{5a-6}{3}$. Chern and Li \cite{CL12} improved this result as follows.

\begin{thm}\label{CLthm}\textup{\cite{CL12}}
Let $a$ and $b$ be positive integers. Then there exists a connected graph $G$ such that $rc(G) = a$ and $src(G) = b$ if and only if $a = b \in \{1,2\}$ or $3 \le a \le b$.
\end{thm}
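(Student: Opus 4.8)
The plan is to prove the two directions separately, handling the realizability direction in the two sub-cases $a=b$ and $3\le a<b$.

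\emph{Necessity.} Since $rc(G)\le src(G)$ for every connected $G$, the only thing to rule out is $a<b$ when $a\in\{1,2\}$. If $rc(G)=1$, then $G$ is complete by Theorem~\ref{charthm}(e), so $src(G)=1$; and if $rc(G)=2$, then $src(G)=2$ by Theorem~\ref{charthm}(f). Hence $a\le 2$ forces $a=b$, which is exactly the asserted restriction. The pair $a=b$ is always realizable: for any $n\ge 1$ the star $K_{1,n}$ is a tree with $n$ edges, so $rc(K_{1,n})=src(K_{1,n})=n$ by Theorem~\ref{charthm}(g).

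\emph{A construction for $3\le a<b$.} I would take $G$ to be the star $K_{1,a}$, with centre $z$ and leaves $\ell_1,\dots,\ell_a$, together with $b-a$ ``pendant triangles'' at $z$: for $1\le j\le b-a$ add two new vertices $x_j,y_j$ and the three edges $zx_j,\ zy_j,\ x_jy_j$. Then $G$ has diameter $2$. Its bridges are exactly the edges $z\ell_i$, so $rc(G)\ge a$ by Theorem~\ref{charthm}(b); and $z$ is a cut-vertex for which $G-z$ has $a+(b-a)=b$ components (the $a$ isolated leaves and the $b-a$ triangle-edges), so $src(G)\ge b$ by Theorem~\ref{charthm}(c).

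\emph{Matching upper bounds.} For $rc(G)\le a$ (this is where $a\ge 3$ is used), colour $z\ell_i$ with colour $i$ and, in every triangle, colour $zx_j,zy_j,x_jy_j$ with $1,2,3$ respectively; one checks that every pair of vertices is joined by a rainbow path, the only ``bad'' pairs --- such as $\ell_1$ and some $x_j$, or $x_j$ and $x_k$ with $j\ne k$ --- being resolved by a length-$3$ detour through a triangle edge. For $src(G)\le b$, colour $z\ell_i$ with $i$ for $1\le i\le a$, colour both $zx_j$ and $zy_j$ with $a+j$ for $1\le j\le b-a$, and colour the edges $x_jy_j$ arbitrarily; since $G$ has diameter $2$, it suffices to check the length-$2$ geodesics, which all have the form $\bullet\,z\,\bullet$ and are rainbow by construction. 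Combining with the previous paragraph, $rc(G)=a$ and $src(G)=b$.

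\emph{Expected obstacle.} I do not anticipate a serious obstacle: the lower bounds come for free from the bridge and cut-vertex inequalities of Theorem~\ref{charthm}, and the two upper-bound colourings are finite verifications made easy by the diameter being $2$. The one genuine decision is to choose a gadget that pins down \emph{both} parameters simultaneously; the ``star plus pendant triangles'' graph does this precisely because the $a$ bridges force $a$ distinct colours (fixing $rc$) while each of the $b-a$ triangles contributes a fresh component of $G-z$ (raising $src$) yet costs nothing extra for $rc$, its outer edge always furnishing a cheap detour.
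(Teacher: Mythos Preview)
The paper does not prove Theorem~\ref{CLthm}; it merely quotes it from \cite{CL12}. Your argument is a correct, self-contained proof: the necessity direction follows cleanly from Theorem~\ref{charthm}(e),(f), and your ``star plus pendant triangles'' construction realizes every admissible pair, with the bridge and cut-vertex lower bounds from Theorem~\ref{charthm}(b),(c) matching the two explicit colourings. The case checks for the $rc$-colouring (in particular the detours $\ell_1 z y_j x_j$, $x_j z y_k x_k$, etc.) all go through, and the $src$-colouring is trivially strongly rainbow connected since every length-$2$ geodesic passes through $z$ and receives two distinct colours.
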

Theorem \ref{CLthm} was an open problem of Chartrand et al., and it completely characterises all possible pairs $a$ and $b$ for the above question. Subsequently, Chen et al.~\cite{CLLL18} proved some similar results for the vertex-coloured and total-coloured versions of the rainbow connection parameter. Here, in view of the inequalities $src(G)\le src^\ell(G)$ and $rc^\ell(G)\le src^\ell(G)$ of (\ref{ineqs}), we have Theorems \ref{srcsrclcompthm} and \ref{rclsrclcompthm} below.

\begin{thm}\label{srcsrclcompthm}
Let $a$ and $b$ be positive integers. Then there exists a connected graph $G$ such that $src(G) = a$ and $src^\ell(G) = b$ if and only if $a=b=1$ or $2\le a\le b$.
\end{thm}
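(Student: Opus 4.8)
The plan is to prove the ``only if'' direction with Theorem~\ref{charthm}(e), and the ``if'' direction by an explicit gadget built over a complete multipartite graph with prescribed chromatic and list‑chromatic numbers. For necessity: if $src(G)=a$ and $src^\ell(G)=b$ then $a\le b$ by (\ref{ineqs}); and by Theorem~\ref{charthm}(e) the conditions $src(G)=1$, $src^\ell(G)=1$ and ``$G$ complete'' are mutually equivalent, so $a=1$ if and only if $b=1$. Hence the only admissible pairs are $a=b=1$ (realised by $G=K_2$) and $2\le a\le b$, which I turn to next.

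For the range $2\le a\le b$, first I would fix a complete multipartite graph $F$, with parts of sizes $n_1,\dots,n_a$, such that $\chi(F)=a$ and $\chi_\ell(F)=b$. Such an $F$ exists: take $F_0=K_{p,p^p}$ with $p=b-a+1\ge 1$, so that $\chi(F_0)=2$ and $\chi_\ell(F_0)=p+1$ (the well‑known fact from the 1970s recalled in the introduction, valid also for $p=1$, where $F_0=K_2$); then set $F=K_1+\cdots+K_1+F_0$ with $a-2$ adjoined universal vertices, using that adjoining a universal vertex raises both $\chi$ and $\chi_\ell$ by exactly $1$. Let $H=\overline{F}=K_{n_1}\cup\cdots\cup K_{n_a}$, a disjoint union of $a\ge 2$ cliques, and let $G$ be obtained from $H$ by adding one universal vertex $v$. (When $a=b$ this is just $G=K_{1,a}$, a star, so $src=src^\ell=a$ already follows from Theorem~\ref{charthm}(g); the interest is in $a<b$.)

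Next I would verify $src(G)=a$ and $src^\ell(G)=b$. Since $v$ is a cut‑vertex with $G-v=H$ having $a$ components, Theorem~\ref{charthm}(c) gives $src(G)\ge a$. For the upper bounds: given a $b$‑edge‑list assignment $L$ of $G$, use $L(vx)$ as the list of the vertex $x$ of $F$ to obtain a proper $L$‑colouring $\phi$ of $F$ (possible as $b\ge\chi_\ell(F)$), put $c(vx)=\phi(x)$, and colour the edges inside $H$ arbitrarily; then adjacent vertices are joined by their edge, and non‑adjacent vertices $x,y$ lie in different cliques of $H$, so $xy\in E(F)$, so $c(vx)=\phi(x)\ne\phi(y)=c(vy)$ and $xvy$ is a rainbow geodesic — giving $src^\ell(G)\le\chi_\ell(F)=b$, and the same argument with ordinary colourings gives $src(G)\le\chi(F)=a$. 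The structural point driving the matching lower bound $src^\ell(G)\ge b$ is that vertices in different cliques of $H$ have \emph{no} common neighbour in $H$, hence in $G$ exactly one common neighbour, namely $v$. So if $L_0$ is a $(b-1)$‑list assignment of $F$ admitting no proper colouring, set $L(vx)=L_0(x)$ and give the edges of $H$ arbitrary $(b-1)$‑lists; in any strongly rainbow connected $L$‑colouring $c$, every pair $x,y$ with $xy\in E(F)$ is at distance $2$ in $G$ and so is joined by a rainbow path of length $2$, necessarily $xvy$, forcing $c(vx)\ne c(vy)$; then $x\mapsto c(vx)$ is a proper $L_0$‑colouring of $F$, a contradiction. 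Hence $src(G)=a$ and $src^\ell(G)=b$.

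The only genuine difficulty is the input $F$: producing a complete multipartite graph whose ordinary and list chromatic numbers are \emph{exactly} the prescribed $a$ and $b$. Once $F$ is fixed, the ``universal vertex over a disjoint union of cliques'' gadget translates it mechanically, because in that gadget every distance‑$2$ pair is forced through the apex $v$, so $src^\ell(G)=\chi_\ell(F)$ and $src(G)=\chi(F)$. The two facts to pin down along the way are that adjoining a universal vertex shifts $\chi$ and $\chi_\ell$ in lockstep, and the classical value $\chi_\ell(K_{p,p^p})=p+1$.
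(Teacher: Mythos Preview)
Your reduction is correct and is essentially the mechanism behind the paper's Lemma~\ref{preslem1}: with $G$ a universal vertex $v$ over a disjoint union of cliques and $F=\overline{G-v}$ the corresponding complete multipartite graph, one has $src(G)=\chi(F)$ and $src^\ell(G)=\chi_\ell(F)$, because every distance-$2$ pair in $G$ has $v$ as its \emph{unique} common neighbour. The gap is in manufacturing $F$ with $\chi_\ell(F)=b$. The assertion that ``adjoining a universal vertex raises $\chi_\ell$ by exactly $1$'' is false. The join with $K_1$ raises $\chi$ by $1$ and raises $\chi_\ell$ by \emph{at most} $1$, but it need not raise $\chi_\ell$ at all: for example $\chi_\ell(K_{3,3})=3$ while $\chi_\ell(K_{1,3,3})=3$ as well (Theorem~\ref{KieOhbthm} with $t_1=1$, $t_3=2$). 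So your inductive step only yields $\chi_\ell(F)\le b$. In fact your specific $F$ fails for every pair with $b=a+1$ and $a\ge 3$: then $p=2$, $F=K_{(a-2)\times 1,2,4}$ has $|V(F)|=a+4\le 2a+1=2\chi(F)+1$, and by the Noel--Reed--Wu theorem (Ohba's conjecture) $\chi_\ell(F)=\chi(F)=a<b$, so your $G$ has $src^\ell(G)=a$, not $b$.

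The repair is exactly what the paper's construction does: keep the large clique of size $(b-1)^{b-1}$ (not $p^p$) and let the remaining $a-1$ cliques have $b-1$ vertices in total. In your language, take part sizes $1,\dots,1,b-a+1,(b-1)^{b-1}$ (with $a-2$ ones). Then $F$ contains $K_{b-1,(b-1)^{b-1}}$ as the complete bipartite graph between the big part and all other vertices, giving $\chi_\ell(F)\ge\chi_\ell(K_{b-1,(b-1)^{b-1}})=b$ directly, with no appeal to how $\chi_\ell$ behaves under joins; and the upper bound $\chi_\ell(F)\le b$ still holds (greedily give the $b-1$ vertices outside the big part distinct colours, then each big-part vertex has a colour left). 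This is precisely Lemma~\ref{preslem1} read through your lens.
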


\begin{thm}\label{rclsrclcompthm}
Let $a$ and $b$ be positive integers. Then there exists a connected graph $G$ such that $rc^\ell(G) = a$ and $src^\ell(G) = b$ if and only if $a=b=1$ or $2\le a\le b$.
\end{thm}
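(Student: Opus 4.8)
The plan is to mirror the structure of the proof of Theorem \ref{srcsrclcompthm}, adapting the necessity and sufficiency parts to the pair $(rc^\ell, src^\ell)$. For necessity, the inequality $rc^\ell(G) \le src^\ell(G)$ from (\ref{ineqs}) already forces $a \le b$. To rule out $a = b \in \{2\}$ being the only non-trivial coincidence below the diagonal — wait, more precisely, we must show that $a = 1$ forces $b = 1$ and vice versa, and that there is no graph with $rc^\ell(G) = 1 < b$. This follows directly from Theorem \ref{charthm}(e): $rc^\ell(G) = 1$ iff $G$ is complete iff $src^\ell(G) = 1$. So the only genuinely excluded pairs are those with $\min(a,b) = 1$ and $a \ne b$, plus pairs with $a > b$. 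Hence the stated characterisation is the only possibility on the necessity side, and I would dispatch this in a short paragraph.

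For sufficiency, the case $a = b = 1$ is $G = K_n$. For the range $2 \le a \le b$, I would construct, for each such pair, a connected graph $G$ with $rc^\ell(G) = a$ and $src^\ell(G) = b$. The natural first attempt is to reuse the graphs realising $src(G') = a'$ and $src^\ell(G') = b'$ from Theorem \ref{srcsrclcompthm}, but one must be careful: for those graphs one needs $rc^\ell$, not $rc$, to equal $a$. A cleaner route is to start from complete bipartite or multipartite graphs, where Theorems \ref{rcsrclKmnthm} and \ref{rcsrclmpthm} give us exact control. For instance, when $2 \le a \le 3$ one can take $K_{m,n}$ with suitable $m,n$: we have $rc^\ell(K_{m,n}) = \min(\lceil\sqrt[m]{n}\,\rceil, 4)$ and $src^\ell(K_{m,n}) = \lceil\sqrt[m]{n}\,\rceil$, so choosing parameters with $\lceil\sqrt[m]{n}\,\rceil = b \ge 4$ gives $rc^\ell = 4$ while $src^\ell = b$; this handles $a = 4$. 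To reach smaller $a$ with large $b$, one attaches pendant structure or uses the multipartite version where $rc^\ell$ drops to $3$ or $2$ while $src^\ell$ stays large. The key mechanism is exactly that seen in Section \ref{specsect}: adding a ``shortcut'' edge between two sides of the bipartition collapses rainbow paths to length $\le 4$ without helping geodesics. So the construction for general $2 \le a \le b$ should be a family of complete multipartite-type graphs, possibly with a bounded number of extra edges, tuned so that the rainbow connection number is pinned at $a \in \{2,3,4\}$ or forced higher by a diameter/bridge argument via Theorem \ref{charthm}(a),(b), while $src^\ell$ is pinned at $b$ by the $\lceil\sqrt[m]{n}\,\rceil$ formula.

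The main obstacle will be realising pairs with $a \ge 5$: the complete bipartite graphs alone cap $rc^\ell$ at $4$, so for $a \ge 5$ one needs a different gadget. I expect to use a ``blow-up along a path'' or an amalgam of a long path (which forces $rc^\ell$ and $src^\ell$ to grow together via the diameter lower bound of Theorem \ref{charthm}(a)) with a complete-multipartite block (which widens the gap between $rc^\ell$ and $src^\ell$). Specifically, take a graph $G_0$ on which $rc^\ell(G_0) = src^\ell(G_0) = a$ — e.g. a path $P_{a+1}$, for which Theorem \ref{charthm}(g) gives both parameters equal to $a = e(P_{a+1})$ — and then glue to it, at one endpoint, a copy of a complete bipartite graph $K_{m,n}$ with $src^\ell = b$ chosen so that the geodesics through the bipartite block still require $b$ colours while rainbow paths can be routed cheaply; one then shows, using Theorem \ref{charthm}(d) and the structure of geodesics, that $rc^\ell$ of the amalgam is $\max(a, \text{something} \le a) = a$ and $src^\ell$ is $\max(a, b) = b$. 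Verifying that the rainbow connection number does not accidentally jump above $a$ — i.e. that rainbow paths between a vertex deep in the bipartite block and a vertex far along the path can always be coloured from arbitrary lists of size $a$ — is the delicate point, and I would handle it by an explicit list-colouring argument on the spanning tree plus a small number of shortcut edges, much as in the proof of Theorem \ref{charthm}(a) and Theorem \ref{univthm}.
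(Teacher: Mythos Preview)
Your necessity argument and the diagonal case $a=b$ (via paths and Theorem \ref{charthm}(g)) are correct and match the paper. For $a=3$ and $a=4$ with $b>a$, your complete-multipartite and complete-bipartite constructions also work, directly from Theorems \ref{rcsrclmpthm}(a) and \ref{rcsrclKmnthm}(a).

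There are, however, two genuine gaps. First, for $a=2$ with $b\ge 3$: complete multipartite graphs cannot realise this pair, since Theorem \ref{rcsrclmpthm} shows that $rc^\ell(K_{n_1,\dots,n_t})=2$ forces $src^\ell(K_{n_1,\dots,n_t})=2$ as well (both occur precisely when $n_t\ge 2$ and $m>n$). The paper uses a separate construction (Lemma \ref{preslem2}): two disjoint cliques of sizes $b-1$ and $(b-1)^{b-1}$ together with a universal vertex $v$, with a direct list-colouring argument for $rc^\ell=2$ and Lemma \ref{preslem1} for $src^\ell=b$.

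Second, and more seriously, your construction for $a\ge 5$ fails outright. If you glue a path of length $a$ at one endpoint to any vertex of $K_{m,n}$ (with $m\ge 2$, which is needed to have $src^\ell(K_{m,n})=b$ via Theorem \ref{rcsrclKmnthm}(b)), the amalgam has diameter at least $a+2$: the far end of the path is at distance $a+2$ from every vertex in the opposite class of the bipartition. By (\ref{ineqs2}), $rc^\ell\ge\textup{diam}\ge a+2>a$, so $rc^\ell=a$ is impossible for this graph. No amount of care in the list-colouring argument can repair this, since the obstruction is the diameter lower bound itself. The paper's construction (Lemma \ref{preslem3}) avoids this by keeping the diameter equal to $2$: one takes $K_{m\times 1,1\times n}$ with $n=(b-1)^m+1$ and $m$ large, and attaches $a$ pendant edges at one of the singleton vertices $u_1$. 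Then $u_1$ is universal, the $a$ pendants are precisely the trivial components of $G-u_1$, and Theorem \ref{univthm}(a) gives $rc^\ell(G)=\max(a,3)=a$ immediately; $src^\ell(G)=b$ follows from Theorem \ref{rcsrclKmnthm}(b) applied to a spanning $K_{m-1,n}$. The key mechanism you are missing is that pendant \emph{bridges at a universal vertex} push $rc^\ell$ up to the number of bridges without increasing the diameter, whereas attaching a long path raises both the diameter and $rc^\ell$ together and so cannot separate $rc^\ell$ from $src^\ell$ in the way you need.
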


Before we prove Theorems \ref{srcsrclcompthm} and \ref{rclsrclcompthm}, we prove some lemmas.

\begin{lemma}\label{preslem1}
Let $b\ge 2$. Let $G$ be a graph on $b+(b-1)^{b-1}$ vertices as follows: Take vertex-disjoint graphs $H$ and $K$ on $b-1$ and $(b-1)^{b-1}$ vertices, where $K$ is a clique, and connect another vertex $v$ to all vertices of $H$ and $K$. Then $src^\ell(G)=b$.
\end{lemma}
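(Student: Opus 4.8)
The plan is to establish the two bounds $src^\ell(G)\ge b$ and $src^\ell(G)\le b$ separately.

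For the lower bound $src^\ell(G)\ge b$, I would first note that $G$ has a universal vertex $v$, and $G-v$ has two components: the graph $H$ on $b-1$ vertices and the clique $K$ on $(b-1)^{b-1}$ vertices. The pairs of vertices that force many colours are those inside $K$: any two vertices $x,y\in V(K)$ are at distance $2$ in $G$ unless they are adjacent, but since $K$ is a clique, $xy$ is an edge, so the geodesic is the single edge $xy$ — this does not directly help. Instead, the key is to look at geodesics through $v$: for $x\in V(H)$ and $y\in V(K)$ (non-adjacent since $H,K$ are in different components of $G-v$), the only $x$--$y$ geodesic is $xvy$, forcing $c(xv)\ne c(vy)$. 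More usefully, I expect one should argue as in Theorem \ref{rcsrclKmnthm}(b): consider the subgraph of stars at $v$ restricted to $V(K)$, and observe that for any two $x,y\in V(K)$, there is no rainbow geodesic of length $2$ available through $v$ (since $xy$ itself is the geodesic), so the real constraint comes from pairs $x\in V(H)$, $y\in V(K)$. Actually the cleanest route: restrict attention to the subgraph $K_{b-1,\,(b-1)^{b-1}}$ inside $G$ formed by $v$ together with… no — $v$ has degree too large. Let me instead use the direct combinatorial argument: with $r=b-1$ colours, assign to each $y\in V(K)$ the vector $\vec y=(c(vy))$ — this is only one coordinate, so that is not enough. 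The correct forcing mechanism must involve a bad list assignment; so for the lower bound I would exhibit a specific $(b-1)$-edge-list assignment with no strongly rainbow connected colouring, mirroring the extremal list assignments used for $\chi_\ell(K_{p,p^p})=p+1$, i.e.\ give the $b-1$ edges $\{wv:w\in V(H)\}$ lists that are $b-1$ pairwise disjoint sets $B_1,\dots,B_{b-1}$ each of size $b-1$, and give the edges $\{yv:y\in V(K)\}$ the $(b-1)^{b-1}$ possible ``transversal'' lists $\{\beta_1,\dots,\beta_{b-1}\}$ with $\beta_i\in B_i$; then any $L$-colouring colours each $wv$ by some element of $B_{\sigma(w)}$, and some $y\in V(K)$ gets the list equal to the set of chosen colours, forcing $c(yv)$ to clash with some $c(wv)$, destroying the unique geodesic $wvy$. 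That gives $src^\ell(G)\ge b$.

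For the upper bound $src^\ell(G)\le b$, let $L$ be a $b$-edge-list assignment of $G$. I would colour in stages. First colour the edges $wv$ for $w\in V(H)$ (there are $b-1$ of them) with distinct colours from their lists — possible greedily since each list has size $b\ge b-1+1$. Next, using the edges of $K$ plus the edges $yv$, handle $V(K)$: for each $y\in V(K)$ the only relevant geodesics are the edge $yy'$ for $y'\in V(K)$ (automatically rainbow, length $1$), and $yvw$ for $w\in V(H)$, which needs $c(yv)\ne c(vw)$, and $yvy'$ is never a geodesic. So it suffices to choose $c(yv)\in L(yv)$ avoiding the $b-1$ colours $\{c(vw):w\in V(H)\}$ — possible since $|L(yv)|=b$ — and then colour the edges of the clique $K$ arbitrarily from their lists (any geodesic inside $K$ has length $1$). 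Finally check all remaining pairs: $v$ to any vertex (edge, length $1$); $w,w'\in V(H)$ (geodesic $wvw'$, rainbow since the $c(wv)$ are distinct); $w\in V(H)$, $y\in V(K)$ (geodesic $wvy$, rainbow by construction); $y,y'\in V(K)$ (edge). Hence $c$ is strongly rainbow connected.

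The main obstacle I anticipate is getting the lower bound argument exactly right: one must verify that in the bad list assignment, there is genuinely \emph{no} strongly rainbow connected $L$-colouring — this requires checking that the geodesic between $w\in V(H)$ and the offending $y\in V(K)$ is \emph{unique} and of length $2$ (so that there is no alternative rainbow geodesic), which uses that $wy\notin E(G)$ and that $v$ is the only common neighbour forced; and one must also make sure the count of ``transversal'' lists is exactly $(b-1)^{b-1}$ so that $K$ is large enough to contain one with each possible colour profile. I would also double-check the boundary case $b=2$, where $H$ is a single vertex, $K$ is a single vertex, and $G=K_3$, giving $src^\ell(K_3)=1$?? — wait, that contradicts $b=2$, so I must re-read: for $b=2$, $|V(H)|=1$, $|V(K)|=1^1=1$, so $G$ is the path $P_3$ (a tree on $3$ vertices), and indeed $src^\ell(P_3)=e(P_3)=2=b$ by Theorem \ref{charthm}(g); so the formula is consistent and the tree case is covered separately by the earlier theorem. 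This sanity check suggests the clean statement and the proof structure above are correct.
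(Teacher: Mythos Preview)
Your proposal is correct and follows essentially the same approach as the paper: for the lower bound you construct the same ``transversal'' $(b-1)$-edge-list assignment (disjoint lists $B_1,\dots,B_{b-1}$ on the edges $wv$, $w\in V(H)$, and all $(b-1)^{b-1}$ transversals on the edges $yv$, $y\in V(K)$), forcing a clash on the unique geodesic $wvy$; and for the upper bound you greedily colour the $b-1$ edges at $H$ distinctly, then avoid those colours on each edge $yv$, colouring everything else arbitrarily. Your sanity check at $b=2$ (giving $G=P_3$) and your verification that $wvy$ is the unique geodesic are both correct, so the argument is complete.
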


\begin{proof}
This lemma is inspired by the fact that $\chi_\ell(K_{p,p^p})=p+1$ for $p\ge 1$. First, we define the $(b-1)$-edge-list assignment $L$ of $G$ as follows. Over $u\in V(H)$, let $L(uv)$ be disjoint sets of size $b-1$. Then, over $w\in V(K)$, let $L(wv)$ be all the possible sets of size $b-1$ by taking one element from each $L(uv)$, for $u\in V(H)$. All remaining edges are given arbitrary lists of size $b-1$. Now, let $f$ be an $L$-edge-colouring. Then there exists $y\in V(K)$ such that $L(yv)=\{f(uv):u\in V(H)\}$. Thus $f(yv)=f(xv)$ for some $x\in V(H)$, and there is no rainbow geodesic connecting $x$ and $y$. Hence, $src^\ell(G)\ge b$.

Now, let $L'$ be a $b$-edge-list assignment of $G$. Let $g$ be an $L'$-edge-colouring such that $g(uv)$ are distinct for $u\in V(H)$; and for every $w\in V(K)$, we have $g(wv)\neq g(uv)$ for all $u\in V(H)$. All remaining edges have arbitrary colours chosen from their lists. Then $g$ is strongly rainbow connected. Hence, $src^\ell(G)\le b$.
\end{proof}

\begin{lemma}\label{preslem2}
Let $b\ge 3$. Then there exists a connected graph $G$ such that $rc^\ell(G)=2$ and $src^\ell(G)=b$.
\end{lemma}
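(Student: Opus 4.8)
The plan is to build, for each $b\ge 3$, a connected graph $G$ with $rc^\ell(G)=2$ and $src^\ell(G)=b$. The natural strategy is to start from a graph that already forces the strong parameter to be large — namely the graph from Lemma \ref{preslem1}, which has $src^\ell=b$ by virtue of containing (as induced subgraph around the cut-vertex) a copy of $K_{b-1,(b-1)^{b-1}}$-type structure that mimics $\chi_\ell(K_{p,p^p})=p+1$ — and then to add enough edges so that short non-geodesic rainbow paths become available, pulling $rc^\ell$ down to $2$ while not decreasing $src^\ell$. Concretely, I would take $G$ to be the graph of Lemma \ref{preslem1} (with its clique $K$ on $(b-1)^{b-1}$ vertices, the independent set $H$ on $b-1$ vertices, and the universal vertex $v$) and modify it: for instance, make $H\cup\{v\}$ or some larger piece into a clique, or attach the whole Lemma \ref{preslem1} gadget to a fixed $K_3$ or to a vertex of small $rc^\ell$, in such a way that every pair of vertices is joined by a rainbow path of length at most $2$ regardless of the $2$-list assignment. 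The key is that $rc^\ell(G)=2$ only needs to be verified for $2$-edge-list assignments, and Theorem \ref{charthm}(e) already rules out $rc^\ell(G)=1$ since $G$ will not be complete, so the lower bound $rc^\ell(G)\ge 2$ is free.

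The main steps, in order, would be: (1) define $G$ explicitly as a suitable augmentation of the Lemma \ref{preslem1} graph; (2) show $src^\ell(G)\ge b$ by exhibiting the same $(b-1)$-edge-list assignment as in Lemma \ref{preslem1} (restricted to the edges at the cut/universal vertex joining to $H$ and $K$), arguing that the added edges cannot create a rainbow \emph{geodesic} between the offending pair $x\in V(H)$, $y\in V(K)$ because that pair is still at distance $2$ with $v$ as the only common neighbour — so one must check the augmentation does not shorten $d_G(x,y)$ or add a new common neighbour; (3) show $src^\ell(G)\le b$ by the same greedy colouring as in Lemma \ref{preslem1}, assigning distinct colours to the $H$–$v$ edges and avoiding them on the $K$–$v$ edges, with arbitrary choices elsewhere; (4) show $rc^\ell(G)\le 2$: given any $2$-edge-list assignment $L$, greedily colour so that every pair of vertices has a rainbow path of length $\le 2$ — this is where the augmentation earns its keep, since inside a clique of size $\ge 3$ one can always $2$-list-edge-colour a spanning structure (or just two incident edges at any vertex with distinct colours, which a $2$-list always allows) to get length-$2$ rainbow paths.

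The hard part will be step (2) combined with step (1): I need the augmentation to be rich enough that step (4) goes through — i.e. every pair of vertices, including pairs inside $K$, inside $H$, and across — has a rainbow $2$-path for \emph{every} $2$-list assignment — yet sparse enough that the distance-$2$ ``trap'' between $H$ and $K$ through $v$ survives, so that the $src^\ell$ lower bound is not destroyed. Balancing these two requirements is the real content: adding edges incident to $v$ or between $H$ and $K$ risks giving $x,y$ a second common neighbour and killing the $src^\ell=b$ bound, while adding too few edges elsewhere leaves some pair (e.g. two vertices of $H$, or a vertex of $H$ and a far vertex of $K$) without a guaranteed rainbow $2$-path under an adversarial $2$-list. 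I expect the resolution to be to make $K$ a clique (already so), make $H$ a clique as well, and add all edges between $H$ and $K$ — or alternatively route everything through a small auxiliary clique disjoint from the trap — after which the only distance-$2$ pairs forced to use $v$ are exactly the $H$–$K$ pairs, and a case analysis (inside $K$: use a common neighbour in $K$; inside $H$: use a common neighbour in $H$ or $v$; $H$ to $K$: adjacent, distance $1$) delivers both bounds. Verifying in step (4) that incident-edge colours can always be made distinct from $2$-lists, and in step (2) that no unwanted common neighbour of $x$ and $y$ was created, are the details I would nail down carefully; everything else mirrors Lemma \ref{preslem1}.
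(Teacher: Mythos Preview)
Your overall instinct --- start from the Lemma~\ref{preslem1} graph and specialise $H$ --- is right, and one of your listed options, making $H$ a clique (so that $G$ consists of two disjoint cliques $H$, $K$ joined through a universal vertex $v$), is precisely the paper's construction, with $|V(H)|=b-1$ and $|V(K)|=(b-1)^{b-1}$. But there is a genuine gap in your plan for step~(4), and the resolution you settle on in the final paragraph actually fails.

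First, the option you land on --- make $H$ a clique \emph{and} add all edges between $H$ and $K$ --- turns $G$ into the complete graph on $b+(b-1)^{b-1}$ vertices (since $K$ is already a clique and $v$ is universal), whence $src^\ell(G)=1$, not $b$. Your own sentence ``the only distance-$2$ pairs forced to use $v$ are exactly the $H$--$K$ pairs'' is then false; there are no such pairs left, and the Lemma~\ref{preslem1} trap is destroyed. So that option must be discarded.

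Second, for the correct construction (both $H$ and $K$ cliques, \emph{no} $H$--$K$ edges), your step~(4) cannot work as written. You aim to give every pair a rainbow path of length at most~$2$, but for $x\in V(H)$ and $y\in V(K)$ the only length-$2$ path is $xvy$, and with adversarial $2$-lists (say, every edge at $v$ has list $\{1,2\}$, and $|V(H)|,|V(K)|\ge 2$) you cannot avoid $c(xv)=c(yv)$ for some such pair. More to the point, a colouring in which every distance-$2$ pair has a rainbow path of length~$2$ is by definition \emph{strongly} rainbow connected, so succeeding at this would give $src^\ell(G)\le 2$, contradicting $src^\ell(G)=b\ge 3$. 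The whole point of separating $rc^\ell$ from $src^\ell$ is that the former may use non-geodesic paths; you note this in your opening paragraph but then abandon it in step~(4). The paper supplies the missing idea: fix a ``hub'' $z\in V(H)$ and $\alpha\in L(zv)$, colour $uv$ with $\alpha$ for every $u\in V(H)$ with $\alpha\in L(uv)$, colour each $wv$ ($w\in V(K)$) avoiding $\alpha$, and then for each remaining $u\in V(H)$ colour $uv$ and $uz$ so that for every $y\in V(K)$ one of $uvy$ or $uzvy$ is a rainbow path. This routing through length-$3$ paths via $z$ is the step your sketch is missing, and it requires a short but genuine case analysis.
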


\begin{proof}
Let $G$ be a graph consisting of two vertex-disjoint cliques $H$ and $K$, and another vertex $v$ connected to all vertices of $H$ and $K$. Clearly, $rc^\ell(G)\ge 2$. Let $L$ be a $2$-edge-list assignment of $G$. We may assume that $|L(e)|=2$ for all $e\in E(G)$. We obtain an $L$-edge-colouring $f$. Let $z\in V(H)$, and $\alpha\in L(zv)$. Let $f(uv)=\alpha$ for all $u\in V(H)$ such that $\alpha\in L(uv)$. Let $f(wv)\in L(wv)\setminus\{\alpha\}$ for all $w\in V(K)$, and let $\Phi=\{f(wv):w\in V(K)\}$ be the set of these colours. For all $u\in V(H)$ such that $\alpha\not\in L(uv)$, and $L(uv)\not\subset\Phi$ or $L(uz)\setminus\{\alpha\}\not\subset\Phi$, let $f(uv)\in L(uv)\setminus\Phi$ or $f(uz)\in L(uz)\setminus\{\Phi\cup\{\alpha\}\}$. Now, let $w\in V(H)$ be a remaining vertex, so that $\alpha\not\in L(wv)$ and $L(wv),L(wz)\setminus\{\alpha\}\subset\Phi$. We have $L(wv)=\{\beta,\gamma\}$ for some $\beta,\gamma\in\Phi$, so $\beta,\gamma\neq\alpha$. If there exists $\delta\in L(wz)$ with $\delta\in\Phi\setminus\{\beta,\gamma\}$, then $\delta\neq\alpha$, and we let $f(wv)=\beta$ and $f(wz)=\delta$. Otherwise, we may assume that $L(wz)=\{\alpha,\gamma\}$ or $\{\beta,\gamma\}$. Let $f(wv)=\beta$ and $f(wz)=\gamma$. Repeat this procedure for all such vertices $w\in V(H)$. All remaining edges have arbitrary colours chosen from their lists. Then, for all $x\in V(H)$ and $y\in V(K)$, either $xvy$ or $xzvy$ is a rainbow path. Hence, $f$ is a rainbow connected colouring, and $rc^\ell(G)\le 2$.

Setting $|V(H)|=b-1$ and $|V(K)|=(b-1)^{b-1}$, we have $src^\ell(G)=b$ by Lemma \ref{preslem1}.
\end{proof}

\begin{lemma}\label{preslem3}
Let $3\le a<b$. Then there exists a connected graph $G$ such that $rc^\ell(G)=a$ and $src^\ell(G)=b$.
\end{lemma}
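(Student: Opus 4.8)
The plan is to realise the pair $(a,b)$ by a graph with a universal vertex, so that Theorem~\ref{univthm}(a) pins down $rc^\ell(G)$ exactly, while a $K_{p,p^p}$-type obstruction (as in the proof of Lemma~\ref{preslem1}) forces $src^\ell(G)=b$. The main construction (covering $b=a+1$ and $b\ge a+3$): let $v$ be adjacent to everything, and let $G-v$ consist of $a$ isolated vertices $w_1,\dots,w_a$, a clique $K$ on $(b-1)^{b-1}$ vertices, and --- only when $b\ge a+3$ --- a further clique $H'$ on $b-1-a$ vertices. Then $G-v$ has $q\ge 3$ components, exactly $a$ of which are trivial, so since $a\ge 3$, Theorem~\ref{univthm}(a) immediately gives $rc^\ell(G)=\max(a,3)=a$.

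For $src^\ell(G)\ge b$ I would use the $(b-1)$-edge-list assignment that puts pairwise disjoint $(b-1)$-sets on the $b-1$ ``spoke'' edges from $v$ to $\{w_1,\dots,w_a\}\cup V(H')$, all of the $(b-1)^{b-1}$ transversals on the edges from $v$ to $K$, and arbitrary $(b-1)$-sets elsewhere: in any $L$-edge-colouring, the colours on the spoke edges form a transversal, which equals $L(yv)$ for some $y\in K$, so $f(yv)=f(uv)$ for a spoke vertex $u$; since $u$ and $y$ lie in distinct components of $G-v$ and $v$ is their only common neighbour, the unique $u$--$y$ geodesic $uvy$ is not rainbow. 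For $src^\ell(G)\le b$, given a $b$-edge-list assignment $L$ I would colour the spoke edges $w_iv$ with $a$ pairwise distinct colours (possible since $\chi_\ell(K_a)=a\le b$), then each edge $xv$ with $x\in V(H')$ avoiding those $a$ colours, then each edge $yv$ with $y\in K$ avoiding the at most $a+(b-1-a)=b-1$ colours used so far, and the remaining (clique) edges arbitrarily from their lists; as every pair of vertices is at distance at most $2$ and every pair in distinct components of $G-v$ has its unique geodesic through $v$, this colouring is strongly rainbow connected.

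It remains to treat small $a$ and one boundary family. When $a=3$ (resp.\ $a=4$) the graph $K_{1,1,(b-1)^2+1}$ (resp.\ $K_{2,(b-1)^2+1}$) already satisfies $rc^\ell=a$ and $src^\ell=b$, by Theorems~\ref{rcsrclmpthm} and~\ref{rcsrclKmnthm}. This leaves only $b=a+2$ with $a\ge 5$, which I expect to be the main obstacle: the universal-vertex construction is then too tight, because $G-v$ would need exactly one extra non-trivial spoke vertex, and the helper vertex forced into its component is itself adjacent to $v$ and inflates the palette that must be avoided on the $v$-to-$K$ edges up to $b$ colours, so the upper bound breaks (an adversary can make the $K$-lists equal to every such palette). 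For this case I would instead take the Lemma~\ref{preslem1} gadget (with $v$ joined to a clique $H$ on $b-1$ vertices and a clique $K$ on $(b-1)^{b-1}$ vertices) and attach to $K$ a bundle of several internally disjoint paths of length $a-2$, all running from one new vertex $z'$ to distinct vertices of $K$; then $z'$ is at distance $a$ from $H$, forcing $rc^\ell\ge a$ (while $rc^\ell\le a$ and $src^\ell\ge b$ go through as before, the latter because the paths create no $H$--$K$ shortcut), and the multiplicity of parallel paths supplies, for every vertex of $H$, at least one rainbow geodesic to $z'$, keeping $src^\ell\le b$. Verifying $rc^\ell=a$ and $src^\ell=b$ for this graph is a careful case analysis, and the interaction between the path palettes and the gadget palette (ensuring no colour lies on all of the parallel paths simultaneously) is where most of the work lies.
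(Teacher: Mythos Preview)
Your main construction (universal vertex $v$ with $a$ isolated vertices, a large clique $K$, and for $b\ge a+3$ an extra small clique $H'$) is correct, and so are the $a=3,4$ cases using $K_{1,1,(b-1)^2+1}$ and $K_{2,(b-1)^2+1}$. The difficulty, as you correctly identify, is the boundary case $b=a+2$ with $a\ge 5$, and here your argument has a genuine gap. Once you attach paths of length $a-2$ from a new vertex $z'$ into $K$, the vertex $v$ is no longer universal, so Theorem~\ref{univthm}(a) no longer applies and the assertion that ``$rc^\ell\le a$ goes through as before'' is unjustified. In fact the upper bound $rc^\ell\le a$ is far from clear: for the constant $a$-list assignment $L\equiv\{1,\dots,a\}$, any rainbow path has length at most $a$, so internal vertices on different parallel paths (say at distance~$1$ from $z'$) must be connected either through $z'$ (forcing the first edges at $z'$ to carry pairwise distinct colours, impossible once there are more than $a$ paths) or by a path through $K$ of length at least $2(a-3)+1$, which already exceeds $a$ for $a\ge 6$. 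So the number of parallel paths must be kept small, but then arranging a rainbow $z'$--$h$ geodesic for every one of the $a+1$ vertices $h\in H$ simultaneously becomes delicate. You have not specified the number of paths, and the interaction you flag at the end is exactly where the argument is missing, not merely ``where most of the work lies''.

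The paper avoids this entire case split with a single uniform construction: take $K_{m\times 1,1\times n}$ (a clique $U$ of size $m\ge 2$ joined completely to an independent set $V$ of size $n=(b-1)^m+1$) and attach $a$ pendent edges at one vertex $u_1\in U$. Then $u_1$ is universal and $G-u_1$ has exactly $a$ trivial components plus one large component, so Theorem~\ref{univthm}(a) gives $rc^\ell(G)=a$ directly. The lower bound $src^\ell(G)\ge b$ comes for free from $src(G)\ge src(K_{m\times 1,1\times n})=\lceil\sqrt[m]{n}\rceil=b$ via Theorems~\ref{charthm}(d) and~\ref{rcsrcmpthm}(b). For the upper bound, the key idea is to take $m$ large enough that $b^{m-1}>(b-1)^m$, so that $\lceil\sqrt[m-1]{n}\rceil=b$ as well; then the $K_{m-1,n}$ on $(U\setminus\{u_1\})\cup V$ can be strongly rainbow $L$-coloured using $b$ colours by Theorem~\ref{rcsrclKmnthm}(b), and this handles all geodesics not through $u_1$, while the edges at $u_1$ are easy to colour. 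This single example covers every pair $3\le a<b$, including your problematic $b=a+2$, with no case analysis.
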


\begin{proof}
We consider an example of Chen et al.~(\cite{CLLL18}, Lemma 4.10). Let $K_{m\times 1,1\times n}$ be a complete multipartite graph with $m \ge 2$ singleton classes, say $\{u_1\},\dots,\{u_m\}$. Let $U = \{u_1,\dots,u_m\}$, and $V=\{v_1,\dots, v_n\}$ be the class with $n$ vertices. Add $a\ge 3$ pendent edges at $u_1$, say $W = \{w_1, \dots , w_a\}$ is the set of pendent vertices. Let $G$ be the resulting graph, where $n=(b-1)^m+1\ge m$. 

Since $u_1$ is a universal vertex of $G$, and $w_1,\dots,w_a$ are the trivial components of $G-u_1$, we have $rc^\ell(G)=a$ by Theorem \ref{univthm}(a). Now, since $\lceil\!\sqrt[m]{n}\,\rceil = b$, by Theorems \ref{charthm}(d),  \ref{rcsrcmpthm}(b) and (\ref{ineqs}), we have  $src^\ell(G)\ge src(G)\ge src(K_{m\times 1,1\times n})=b$. Let $L$ be a $b$-edge-list assignment of $G$. We obtain an $L$-edge-colouring $c$. Let $c(u_1w_k)\in L(u_1w_k)$ be distinct for $1\le k\le a$. Then, let $c(u_iu_h)\in L(u_iu_h)\setminus\{c(u_1w_1),\dots,c(u_1w_a)\}$ for all $1\le i<h\le m$; and $c(u_1v_j)\in L(u_1v_j)\setminus\{c(u_1w_1),\dots,c(u_1w_a)\}$ for all $1\le j\le n$. Now, let $m$ be sufficiently large so that $b^{m-1}>(b-1)^m$. This inequality holds if $m>\frac{\log b}{\log b-\log (b-1)}$. Then $b^{m-1} \ge n>(b-1)^{m-1}$, and $\lceil\!\sqrt[m-1]{n}\,\rceil = b$. By Theorem \ref{rcsrclKmnthm}(b), we may choose for $c$, a strongly rainbow connected $L$-edge-colouring of the copy of $K_{m-1,n}$ with classes $U\setminus\{u_1\}$ and $V$. Now, for $x\in W$ and $y\in V(G)\setminus\{u_1\}$, $xu_1y$ is a rainbow $x-y$ geodesic. For $x,y\in V$, there exists $u\in U\setminus\{u_1\}$ such that $xuy$ is a rainbow path, and $xuy$ is a rainbow $x-y$ geodesic in $G$. Hence, $c$ is a strongly rainbow connected colouring of $G$, and $src^\ell(G) \le b$.
\end{proof}

We may now prove Theorems \ref{srcsrclcompthm} and \ref{rclsrclcompthm}.

\begin{proof}[Proofs of Theorems \ref{srcsrclcompthm} and \ref{rclsrclcompthm}]
Suppose that there exists a connected graph $G$ such that $src(G)=a$ and $src^\ell(G)=b$ (resp.~$rc^\ell(G)=a$ and $src^\ell(G)=b$). Then by (\ref{ineqs}), we have $a \le b$. If $a = 1$, then Theorem \ref{charthm}(e) gives $b = 1$. Hence, either $a = b=1$, or $2 \le a \le b$.

Conversely, given $a,b$ such that either $a = b=1$ or $2\le a\le b$, we show that there exists a connected graph $G$ with $src(G) = a$ and $src^\ell(G) = b$  (resp.~$rc^\ell(G)=a$ and $src^\ell(G)=b$). If $a=b\ge 1$, then by Theorem \ref{charthm}(g), we have $src(G)=src^\ell(G)=a$ (resp.~$rc^\ell(G)=src^\ell(G)=a$) if $G$ is the path of length $a$. Now, let $2 \le a < b$. For Theorem \ref{srcsrclcompthm}, we take the graph $G$ in Lemma \ref{preslem1} (with the same notations), where $H$ consists of a clique on $b-a+1$ vertices, and a further $a-2$ isolated vertices. We have $src^\ell(G)=b$. Now, $G-v$ has $a$ components, say $H_1,\dots,H_a$. By Theorem \ref{charthm}(c), we have $src(G)\ge a$. Let $c$ be the edge-colouring of $G$ such that $c(e)=i$ if and only if the edge $e$ has an end-vertex in $H_i$, for $1\le i\le a$. Then $c$ is strongly rainbow connected. Hence, $src(G)\le a$. For Theorem \ref{rclsrclcompthm}, the required graph exists by taking $G$ in Lemma \ref{preslem2} if $a=2$, and $G$ in Lemma \ref{preslem3} if $a\ge 3$.
\end{proof}

For the remaining inequality $rc(G)\le rc^\ell(G)$ of (\ref{ineqs}), we have not been able to prove a similar result. We propose the following problem.

\begin{prob}\label{rcrclcompprob}
Characterise all pairs of positive integers $a$ and $b$ such that, there exists a connected graph $G$ with $rc(G) = a$ and $rc^\ell(G) = b$. Is it true that $rc(G)=rc^\ell(G)$ for all connected graphs $G$?
\end{prob}

By (\ref{ineqs}) and Theorem \ref{charthm}(e), we see that in Problem \ref{rcrclcompprob}, $a$ and $b$ must necessarily satisfy $a=b=1$ or $2\le a\le b$. For any pair $a$ and $b$ such that $2\le a<b$, we have not been able to construct a connected graph $G$ such that $rc(G)=a$ and $rc^\ell(G)=b$. The question of whether or not we have $rc(G)=rc^\ell(G)$ for all connected graphs $G$ can be considered to be in a similar direction as the list colouring conjecture. We shall present some insights which may suggest that Problem \ref{rcrclcompprob} is far from simple.

Suppose that we wish to consider Problem \ref{rcrclcompprob} for $2\le a<b$ with $a$ small. Then, a necessary condition on $G$ is diam$(G)\le a$. For the case of graphs with diameter $2$, we may consider whether or not $G$ has a universal vertex. If so, then Theorem \ref{univthm} says that either $rc(G)=rc^\ell(G)$, or possibly $rc(G)=2$ and $rc^\ell(G)=3$. Otherwise, suppose that $G$ has $n$ vertices, and does not have a universal vertex. If we hope to show that $rc(G)<rc^\ell(G)$, then we may try to make $G$ to be sparse, in the hope to make $rc^\ell(G)$ large. If $G$ has the minimum possible number of edges, then a result of Erd\H{o}s and R\'enyi \cite{ER62} states that this minimum is $2n-5$. This result was  sharpened by Henning and Southey \cite{HS15}, when they characterised all graphs $G$ that attain this minimum. For a graph $G$, a \emph{degree $2$ vertex duplication} is an operation on $G$ which creates a new graph $G'$ by taking a vertex $v\in V(G)$ with $d_G(v)=2$, and connecting a new vertex $v'$ to the two neighbours of $v$. Let $G_7$ denote the graph obtained by subdividing each of three adjacent edges of $K_4$ once. Let $\mathcal G$ be the family of graphs that contain $C_5,G_7$, and the Petersen graph $\textsf P_{10}$; and $\mathcal G$ is closed under degree 2 vertex duplications. Thus, $\mathcal G$ contains the graphs as shown in Figure 3.
\indent\\[-0.2cm]
\begin{figure}[htp]
\centering
\includegraphics[width=13cm]{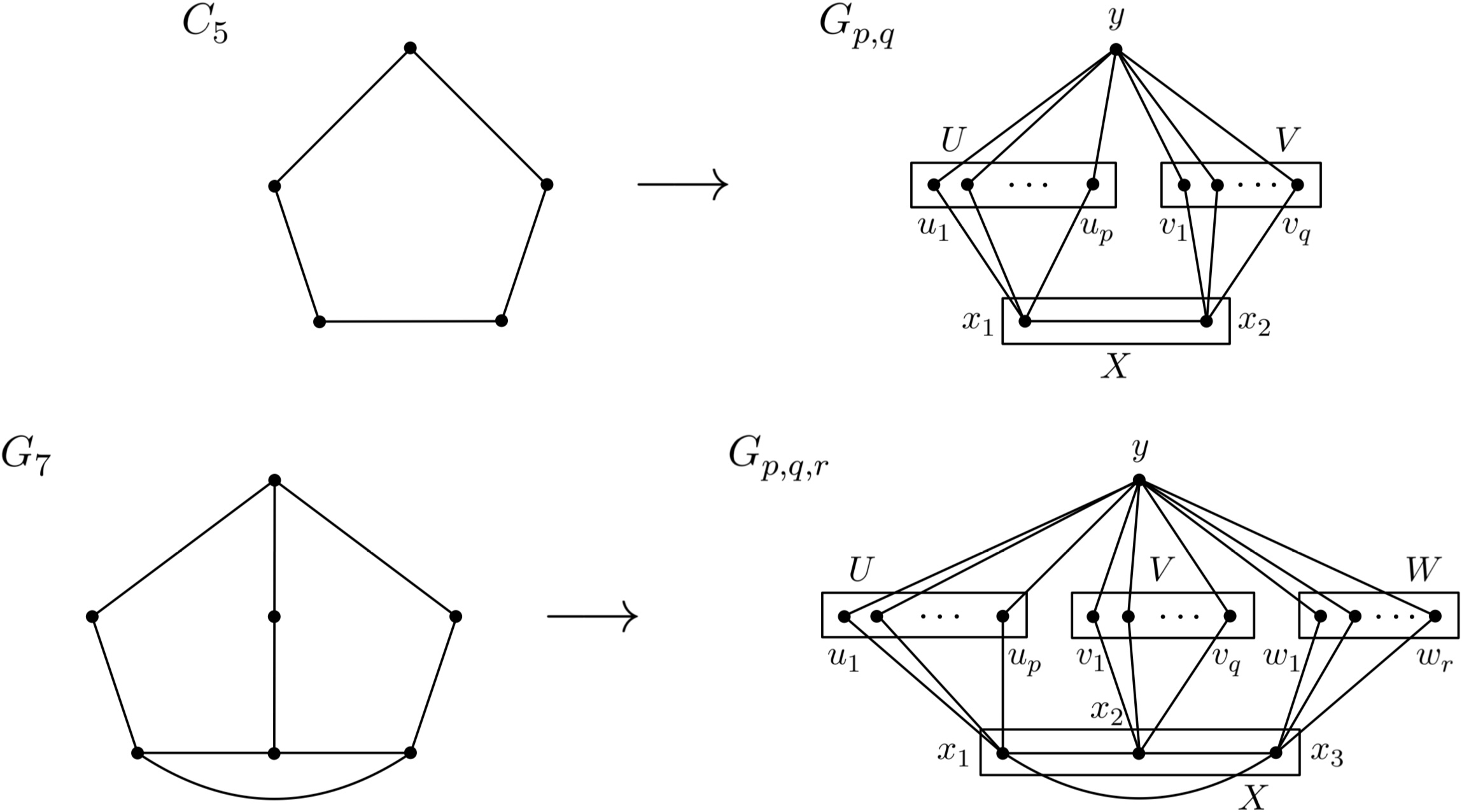}
\end{figure}
\indent\\[-0.9cm]
\begin{center}
\hspace{0.02cm}Figure 3. The graphs of $\mathcal G$
\end{center}
\indent\\[-0.7cm]


In Figure 3, for a graph which involves degree 2 vertex duplications of $C_5$, let  $u_i,v_j,x_1,x_2$, $y$ be the vertices as indicated, for $1\le i\le p$ and $1\le j\le q$. Let $U=\{u_1,\dots,u_p\}$, $V=\{v_1,\dots,v_q\}$, $X=\{x_1,x_2\}$, and we assume that $p\ge q\ge 1$. Let $t=p+q$, and $G_{p,q}$ be the resulting graph. Similarly, for a graph which involves degree 2 vertex duplications of $G_7$, let  $u_i,v_j,w_k,x_1,x_2,x_3,y$ be the vertices as indicated, for $1\le i\le p$, $1\le j\le q$ and $1\le k\le r$. Let $U=\{u_1,\dots,u_p\}$, $V=\{v_1,\dots,v_q\}$, $W=\{w_1,\dots,w_r\}$, $X=\{x_1,x_2,x_3\}$, where $p\ge q\ge r\ge 1$. Let $t=p+q+r$, and $G_{p,q,r}$ be the resulting graph. We have the following result.

\begin{thm}\label{ER-HSthm}\textup{\cite{ER62,HS15}}
Let $G$ be a graph with $n$ vertices and diameter $2$, and with no universal vertex. Then $e(G)\ge 2n-5$. Moreover, equality holds if and only if $G\in\mathcal G$.
\end{thm}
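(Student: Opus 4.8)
This is a known result attributed to Erdős–Rényi (the bound $e(G) \ge 2n-5$) together with Henning–Southey (the characterisation of the extremal family $\mathcal G$), so my plan is not to reprove it from scratch but to indicate the structure of the argument and, more importantly for this paper, to make sure the family $\mathcal G$ as I have described it really coincides with the Henning–Southey extremal family.

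First I would establish the lower bound $e(G) \ge 2n - 5$. The classical approach is the following. Since $G$ has diameter $2$ and no universal vertex, every vertex $v$ has a non-neighbour, and for any two non-adjacent vertices $x,y$ there is a common neighbour. Pick a vertex $v$ of minimum degree $d = \delta(G)$, and let $A = \Gamma_G(v)$, $B = V(G) \setminus (\{v\} \cup A)$, so $|A| = d$ and $|B| = n - 1 - d \ge 1$. Every vertex of $B$ must have a neighbour in $A$ (being non-adjacent to $v$, it needs a common neighbour with $v$, which lies in $A$). Counting edges: the $d$ edges at $v$, plus at least $|B| = n-1-d$ edges from $B$ to $A$, plus — here is where one has to work — enough edges inside $A \cup B$ to cover the remaining distance-$2$ requirements. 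The cleanest version controls the edges within $A$: one shows that the number of edges incident to $A$ (other than those to $v$) is at least $2(d-1) - \text{(something)}$ by using that each vertex of $A$ has degree $\ge d$ and non-neighbours need common neighbours, and combining with $|B| \ge 1$. Adding these contributions and optimising over $d$ yields $e(G) \ge 2n - 5$, with the extremal configurations forcing $\delta(G) = 2$ or $3$ and a very rigid local structure. I would present this computation in the streamlined form due to Henning and Southey rather than Erdős and Rényi's original.

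For the characterisation of equality, I would verify directly that each member of $\mathcal G$ has exactly $2n - 5$ edges, diameter $2$, and no universal vertex: the base graphs $C_5$ ($n=5$, $e=5=2\cdot 5-5$), $G_7$ ($n=7$, $e=9=2\cdot 7-5$ — three subdivisions of $K_4$ give $6-3+6 = 9$ edges), and $\textsf P_{10}$ ($n=10$, $e=15=2\cdot 10-5$) all satisfy the count, and a degree-$2$ vertex duplication adds one vertex and two edges, hence preserves $e = 2n-5$; one checks the duplication keeps diameter $2$ and introduces no universal vertex, since in each of the families $G_{p,q}$ and $G_{p,q,r}$ the duplicated vertices are twins of existing degree-$2$ vertices and the distances are unchanged. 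For the converse — that equality \emph{forces} $G \in \mathcal G$ — I would invoke the Henning–Southey theorem directly, after checking that their description of the extremal graphs matches the three seed graphs plus closure under degree-$2$ duplication described here (this is essentially a translation between two presentations of the same family).

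The main obstacle is really bookkeeping rather than a deep idea: the equality case of the Erdős–Rényi bound is notoriously delicate, because a naive edge count gives $2n-5$ only after a careful case analysis on $\delta(G) \in \{2,3\}$ and on how the neighbourhoods of a minimum-degree vertex interact, and several "near-extremal" configurations have to be ruled out by hand. Since Henning and Southey have already done this, I would lean on their result for the hard direction and confine the self-contained work to (i) the lower-bound inequality, presented cleanly, and (ii) the verification that every graph in $\mathcal G$ is indeed extremal, which is the direction actually needed downstream in this paper when we later compute $rc^\ell$ and $rc$ for these sparse diameter-$2$ graphs.
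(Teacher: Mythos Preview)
The paper does not prove this theorem at all: it is stated as a cited result from Erd\H{o}s--R\'enyi \cite{ER62} and Henning--Southey \cite{HS15}, with no proof or sketch provided. Your proposal correctly recognises this and plans to lean on those references for the hard direction, which is exactly what the paper does (only more tersely --- the paper gives no sketch whatsoever). Your additional outline of the lower-bound argument and the verification that the graphs in $\mathcal G$ are extremal is reasonable background material, but it goes beyond what the paper includes; in the paper's context this theorem is purely a black box used to motivate the choice of the family $\mathcal G$ in Theorem~\ref{minthm}.
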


We shall prove the following result about the graphs of $\mathcal G$.

\begin{thm}\label{minthm}
For all graphs $G\in\mathcal G$, we have $rc(G)=rc^\ell(G)\in\{3,4\}$.
\end{thm}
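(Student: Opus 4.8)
The plan is to handle the three generators of $\mathcal G$ separately, and then to show that the degree $2$ vertex duplication operation preserves both the value of $rc$ and the value of $rc^\ell$, so that the invariant $rc(G)=rc^\ell(G)\in\{3,4\}$ propagates through the whole family. For the base cases: the Petersen graph $\textsf P_{10}$ has $rc(\textsf P_{10})=rc^\ell(\textsf P_{10})=3$ by Theorems \ref{CJMZPetthm} and \ref{Petthm}. For $C_5$ we have $rc(C_5)=rc^\ell(C_5)=3$ by Theorems \ref{rccyclethm} and \ref{rclcyclethm}. For $G_7$ (the graph obtained by subdividing three adjacent edges of $K_4$), I would argue directly: its diameter is $3$, so $rc(G_7)\ge 3$; and $G_7$ contains three bridges (the subdivided edges each split into two non-bridge edges — actually one must recompute the bridges carefully), so a lower bound of $3$ comes from Theorem \ref{charthm}(b) together with the diameter bound, and a matching $L$-colouring using any $3$-edge-list assignment can be built by hand (colour the triangle of $K_4$ with distinct list colours, then extend along each subdivided path avoiding the one forbidden colour). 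This gives $rc(G_7)=rc^\ell(G_7)=3$.

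The main work is the duplication step. Suppose $G'$ is obtained from $G$ by a degree $2$ vertex duplication at $v$ with $\Gamma_G(v)=\{a,b\}$, adding $v'$ with $\Gamma_{G'}(v')=\{a,b\}$. First I would show $rc^\ell(G')\le\max(rc^\ell(G),3)$: given an $r$-edge-list assignment $L$ of $G'$ with $r=\max(rc^\ell(G),3)\ge 3$, restrict to $G$, get a rainbow connected $L$-edge-colouring $c$ of $G$, then colour $v'a$ and $v'b$ from their lists so that $c(v'a)\ne c(v'b)$ and, if possible, so that $\{c(v'a),c(v'b)\}\ne\{c(va),c(vb)\}$ — since $|L(v'a)|,|L(v'b)|\ge 3$ this is always achievable, and then $v'$ reaches every vertex via $v'au\cdots$ or $v'bu\cdots$ by mimicking how $v$ reaches it in $G$, using that $v'av$ or $v'avb\cdots$ — here one has to check a short rainbow path from $v'$ to $v$ itself exists, which holds because $v'av$ has length $2$ with two distinct colours. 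The reverse inequality $rc(G')\ge rc(G)$: a rainbow connected colouring of $G'$ restricts to one of $G$ (any rainbow $x$–$y$ path in $G'$ that uses $v'$ can be rerouted through $v$, since $v$ and $v'$ have the same neighbourhood and any path through $v'$ has the form $\cdots a v' b\cdots$, replaceable by $\cdots a v b\cdots$ — one must check this replacement stays rainbow, which it does because the two edges $av',v'b$ get replaced by $av,vb$ and colour repetition on the new path would already have been a repetition potential on the old, but this needs care). Finally $rc(G')\le rc^\ell(G')$ is free, and the lower bound $rc(G')\ge 3$ follows since $G'$ is not complete (it has diameter $\ge 2$), giving $rc(G')\ge 2$, and then $rc(G')=2$ is excluded because... here I would invoke that all graphs in $\mathcal G$ have diameter exactly $2$ (Theorem \ref{ER-HSthm}) but are not complete and in fact are known not to satisfy the $rc=2$ criterion — more robustly, one shows $rc^\ell(G')=rc(G')$ by the sandwich $3\le rc(G)\le rc(G')\le rc^\ell(G')\le\max(rc^\ell(G),3)=\max(rc(G),3)=rc(G)$ once we know $rc(G)\in\{3,4\}$, closing the induction.

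So the cleaner way to organise it: prove by induction on the number of duplications that $rc(G)=rc^\ell(G)$ and this common value lies in $\{3,4\}$. The inductive step reduces, via $rc(G)\le rc(G')$ (restriction) and $rc^\ell(G')\le\max(rc^\ell(G),3)=rc(G)\le 4$ (extension, using the inductive hypothesis $rc(G)\le 4$ so the extended colouring uses no more than $4$ colours — wait, $\max(rc(G),3)=rc(G)$ since $rc(G)\ge 3$), combined with $rc(G')\le rc^\ell(G')$ and $rc(G')\ge 3$, to the chain $3\le rc(G)\le rc(G')\le rc^\ell(G')\le rc(G)\le 4$, forcing equality throughout. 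One still needs $rc(G')\ge 3$ independently; this follows because $G'$ has diameter $2$ and is triangle-poor enough that $rc(G')=2$ fails — concretely, since $G'\in\mathcal G$ it contains an induced $C_5$, $G_7$, or $\textsf P_{10}$ as a ``core'', and I would give a short direct argument that any such $G'$ has $rc\ge 3$ (e.g. it has a bridge or an appropriate cut structure, or simply diam $=2$ with a pair of vertices whose every common-length-$2$ path... ).

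The step I expect to be the genuine obstacle is verifying $rc(G')\ge 3$ uniformly for the whole family, and — more delicately — checking that the rerouting argument for $rc(G')\ge rc(G)$ actually preserves the rainbow property in all cases (in particular when a rainbow path in $G'$ traverses $v'$ and $G$ already contains an edge $ab$, or when the path also visits $v$). A careful case analysis of how a rainbow path can meet $\{v,v'\}$, using $\Gamma(v)=\Gamma(v')=\{a,b\}$ and the fact that neither $v$ nor $v'$ lies on any shortest structure forcing both, will be needed; I would isolate this as a short lemma about ``twin degree-$2$ vertices'' before running the induction.
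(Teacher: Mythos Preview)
Your inductive approach cannot work, because the key extension bound $rc^\ell(G')\le\max(rc^\ell(G),3)$ is false. A single counterexample from inside $\mathcal G$: take $G=G_{6,1}$ (six copies of the $u$-vertex, one $v$-vertex) and duplicate a $u$-vertex to obtain $G'=G_{7,1}$. The paper computes $rc^\ell(G_{6,1})=3$ but $rc^\ell(G_{7,1})=4$, so no ``restrict to $G$, then colour the two new edges'' argument can succeed with lists of size~$3$. The failure is exactly where you wave your hands: you say $v'$ reaches every vertex by mimicking how $v$ reaches it, but the colour you place on $v'a$ comes from a fixed list of size~$3$, while the rainbow $v$--$x$ path in $G$ was chosen before you saw that list, and may already use all three of those colours past $a$. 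There is no reason the mimicked path $v'a\cdots x$ stays rainbow.

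Worse, your ``cleaner'' chain $3\le rc(G)\le rc(G')\le rc^\ell(G')\le rc(G)$ would force $rc(G')=rc(G)$ at every duplication step, i.e.\ it would prove that $rc$ is constant on each connected piece of $\mathcal G$ under duplication. But it is not: $rc(G_{6,1})=3$ and $rc(G_{7,1})=4$; likewise $rc(G_{4,4})=3$ and $rc(G_{5,4})=4$. So the induction collapses regardless of how carefully you patch the rerouting lemma for $rc(G')\ge rc(G)$. The paper's proof is organised entirely differently: it abandons induction and instead determines $rc$ and $rc^\ell$ case by case over all parameter tuples $(p,q)$ and $(p,q,r)$, constructing explicit $L$-edge-colourings for the upper bounds on $rc^\ell$ (via list-chromatic results for $K_{s_1,s_2,s_3}$, a dedicated lemma for $K_{3,3,3}$, and a separate lemma for the hardest case $G_{5,2,2}$), and giving direct combinatorial obstructions for the lower bounds on $rc$. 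The point is precisely that the threshold between $3$ and $4$ depends delicately on $(p,q)$ or $(p,q,r)$, which an induction along duplications cannot see.
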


Before we prove Theorem \ref{minthm}, we gather some auxiliary results. Firstly, the list chromatic number of $K_{t_2\times 2,2\times 3}$ was determined by Gravier and Maffray \cite{GM98}. 

\begin{thm}\label{GMthm}\textup{\cite{GM98}}
For $t_2\ge 1$, we have $\chi_\ell(K_{t_2\times 2,2\times 3})=t_2+2$. Hence, $\chi_\ell(K_{2,3,3})=3$.
\end{thm}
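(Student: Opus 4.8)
# Proof Proposal for Theorem \ref{GMthm}

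\textbf{Plan overview.} The statement asserts $\chi_\ell(K_{t_2\times 2,\,2\times 3}) = t_2 + 2$ for $t_2 \ge 1$, where the graph has $t_2$ classes of size $2$ and $2$ classes of size $3$, on $n = 2t_2 + 6$ vertices and $t = t_2 + 2$ classes in total. Since this is quoted from Gravier and Maffray \cite{GM98}, I would aim to reconstruct a self-contained argument. The plan is to prove the lower bound $\chi_\ell \ge t_2 + 2$ and the upper bound $\chi_\ell \le t_2 + 2$ separately, and I expect the upper bound (choosability) to be the genuine obstacle.

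\textbf{Lower bound.} First I would show $\chi_\ell(K_{t_2\times 2,\,2\times 3}) \ge t_2 + 2$. The clean way is to exhibit a bad $(t_2+1)$-list assignment. I would try to adapt the classical construction showing $\chi_\ell(K_{p,p^p}) = p+1$ and the Erd\H{o}s--Rubin--Taylor style obstructions (as used in Lemma \ref{preslem1}). One natural approach: assign to the $t_2$ size-$2$ classes lists built from a small common colour pool so that, in any $L$-colouring, these classes are forced to consume a controlled set of colours, leaving the two size-$3$ classes unable to be properly coloured from $t_2+1$ colours. Concretely, I would reserve disjoint colour sets and design the lists on the two size-$3$ classes (six vertices) so that every way of colouring the size-$2$ classes blocks at least one of the six vertices in each size-$3$ class, forcing a monochromatic edge within the bipartite-like join. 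A cleaner alternative is to note that $\chi(K_{t_2\times 2,\,2\times 3}) = t = t_2+2$ (the chromatic number of a complete multipartite graph equals its number of parts), giving $\chi_\ell \ge \chi = t_2 + 2$ immediately \emph{provided} one only needs $\ge$; so the lower bound is in fact free from $\chi_\ell \ge \chi$. This is the easy half.

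\textbf{Upper bound (the hard part).} The real work is showing $K_{t_2\times 2,\,2\times 3}$ is $(t_2+2)$-choosable. I would proceed by induction on $t_2$. The base case $t_2 = 0$ is exactly $\chi_\ell(K_{2,3,3}) = 3$, which I would want to verify directly, e.g.\ via Alon's combinatorial nullstellensatz (Theorem \ref{Alonnull}) applied to the graph polynomial $f_G$ of $K_{2,3,3}$ as in (\ref{GPeq}): with $\deg f_G = 2n$ for $n = 8$, I would compute the coefficient of $\prod_i x_i^2$ and check it is nonzero. For the inductive step, given a $(t_2+2)$-list assignment $L$ on $K_{t_2\times 2,\,2\times 3}$, I would fix a size-$2$ class $\{a,b\}$, colour $a$ and $b$ properly from their lists (choosing two distinct colours, which is possible since $|L(a)|, |L(b)| \ge t_2+2 \ge 3$, and ensuring the two chosen colours can be deleted from the remaining lists), then delete these colours from all other vertices' lists. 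This leaves the graph $K_{(t_2-1)\times 2,\,2\times 3}$ with lists of size $\ge (t_2+2) - 2 = t_2 + 1 = (t_2-1)+2$, completing the induction. The delicate point is that deleting \emph{two} colours per step must leave enough room and must not create an improper colouring with $a,b$; the main obstacle is verifying that the two colours assigned to $\{a,b\}$ can always be chosen so that removing them from the remaining $2t_2 + 4$ vertices still permits the inductive hypothesis to apply. If the naive count is too tight, I would instead remove only the colours actually forced and track Hall-type conditions, or apply a kernel/Galvin-style orientation argument on the auxiliary structure. I expect this bookkeeping on list sizes during the two-colour reduction to be where the proof is most likely to require care.
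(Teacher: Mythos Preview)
The paper does not prove this theorem at all; it is quoted from \cite{GM98} as a known result and used as a black box. So there is no ``paper's own proof'' to compare your proposal against.

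That said, your outline has a genuine gap. In the inductive step you write that after deleting the two colours used on $\{a,b\}$ from the remaining lists, one is left with lists of size $\ge (t_2+2)-2 = t_2+1$. This is an arithmetic slip: $(t_2+2)-2 = t_2$, whereas the inductive hypothesis on $K_{(t_2-1)\times 2,\,2\times 3}$ requires lists of size $\ge (t_2-1)+2 = t_2+1$. So the na\"{\i}ve two-colour reduction loses exactly one unit of list length, and the induction does not close. You do flag later that ``the naive count'' may be ``too tight'', but you never supply the missing idea. The standard salvage is to observe that $a$ and $b$ are non-adjacent, so if $L(a)\cap L(b)\neq\emptyset$ you may give them a \emph{common} colour and delete only one colour from the remaining lists, which restores the count. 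The case $L(a)\cap L(b)=\emptyset$ then needs a separate argument (e.g.\ pigeonhole on how often colours appear in lists, or a direct Hall-type check), and this is precisely the work that the Gravier--Maffray paper carries out; it is not automatic.

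Two smaller points. First, your base case is misidentified: $t_2=0$ gives $K_{3,3}$, not $K_{2,3,3}$; the statement is for $t_2\ge 1$, and the ``Hence'' clause $\chi_\ell(K_{2,3,3})=3$ is the case $t_2=1$. Second, the nullstellensatz approach to the base case would require the coefficient of $\prod_i x_i^2$ in the graph polynomial of $K_{2,3,3}$, but $K_{2,3,3}$ has $8$ vertices and $2\cdot 3 + 2\cdot 3 + 3\cdot 3 = 21$ edges, so the graph polynomial has degree $21 \ne 16$, and that monomial is not even top-degree; the method as you describe it does not apply here.
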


By Theorem \ref{KieOhbthm}, we have $\chi_\ell(K_{3,3,3})=4$. The following lemma says that $K_{3,3,3}$ is actually very nearly $3$-choosable.

%

\begin{lemma}\label{K333lm}
Let $G=K_{3,3,3}$ with classes $S_1,S_2,S_3$, and $u\in S_1$. Let $L$ be a $3$-list assignment of $G$. Then there exists an $L$-colouring $c$ of $G$ such that, $c$ is a proper colouring of $G-u$, and one of the following holds.
\begin{enumerate}
\item[(i)] $c(u)=c(x)$ for a unique $x\in S_2\cup S_3$, and $c(u),c(x)\not\in\{c(z):z\in V(G)\setminus\{u,x\}\}$. Moreover, if $w\in S_i$ for some $i\in\{2,3\}$, and $x=w$, then there exists an $L$-colouring $c'$ of $G$ such that, $c'\equiv c$ on $G-u$, $c'(u)\not\in\{c'(z):z\in (S_1\cup S_{5-i}\cup\{w\})\setminus\{u\}\}$, and $c'(u)=c'(z)$ for all $z\in S_i\setminus\{w\}$.
\item[(ii)] $c(u)\not\in\{c(z):z\in S_2\cup S_3\}$, and so $c$ is a proper colouring of $G$. Moreover, if $c(u)=c(v)$ for some $v\in S_1\setminus\{u\}$, then there exists an $L$-colouring $c''$ of $G$ such that, $c''\equiv c$ on $G-u$, and $c''(u)\not\in\{c''(z):z\in (S_1\cup S_3)\setminus\{u\}\}$.
\end{enumerate}
\end{lemma}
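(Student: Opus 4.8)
The plan is to colour $G-u = K_{2,3,3}$ first and then choose $c(u)\in L(u)$, using two facts. First, $K_{2,3,3}$ is $3$-choosable — this is the case $t_2=1$ of Theorem~\ref{GMthm} — so $G-u$ has a proper $L$-colouring. Second, in \emph{any} proper colouring of $G-u$ the colours appearing on $S_2$ are disjoint from those appearing on $S_3$, since $S_2\cup S_3$ induces a complete bipartite graph; and a colour appearing on $S_2\cup S_3$ cannot appear on $T:=S_1\setminus\{u\}$, since each vertex of $T$ is adjacent to all of $S_2\cup S_3$. As $u$ is adjacent precisely to $S_2\cup S_3$, a colouring that is proper on $G-u$ is proper at $u$ exactly when $c(u)\notin c(S_2\cup S_3)$, and the only vertices that can legitimately repeat $u$'s colour are those of $T$.

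First I would fix a proper $L$-colouring of $G-u$ together with a colour $\delta\in L(u)$, chosen so that $\delta$ is used as few times as possible in $G-u$ over all such choices, and set $c(u)=\delta$. If $\delta$ is used $0$ times, $c$ is proper on all of $G$ and we are in situation (ii). If $\delta$ is used exactly once, at $x$, then either $x\in S_2\cup S_3$ — and we are in situation (i), with $\delta=c(u)=c(x)$ occurring in $G$ only at $u$ and $x$ — or $x\in T$, and we are in situation (ii) with the collision $c(u)=c(v)$, $v\in S_1\setminus\{u\}$. If instead every colour of $L(u)$ is used at least twice in $G-u$, then, since two distinct colours of $L(u)$ cannot each occur at least twice on the same three-vertex side and a colour on $S_2\cup S_3$ cannot also sit on $T$, at least one colour $\delta\in L(u)$ must be used only on $T$ (necessarily on both of its vertices); taking $c(u)=\delta$ again lands us in situation (ii) with a collision in $S_1$. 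Thus in all cases $c$ already satisfies the first assertion; it remains to secure the ``moreover'' clauses, for which I would allow a modification of $c$ on a single class of $G-u$ before committing, and a re-choice of $c(u)$.

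For the moreover clause of (i) (with $x=w\in S_i$), the target is a colour $\mu\in L(u)$, distinct from $\delta=c(w)$, that colours both vertices of $S_i\setminus\{w\}$ and avoids $T$, $S_{5-i}$, and $w$. Since $S_i\setminus\{w\}$ is an independent pair whose common neighbourhood is only $T\cup S_{5-i}$ (at most four colours), I would, at the colouring stage, recolour $S_i\setminus\{w\}$ monochromatically with a colour of $L(u)$ chosen to avoid those four colours; disjointness of the $S_2$- and $S_3$-colour sets then gives $\mu\notin c(S_{5-i})$ for free, and $\mu\ne c(w)$ because $\delta=c(w)$ is isolated. For the moreover clause of (ii) (with $c(u)=c(v)$, $v\in S_1\setminus\{u\}$), the target is a colour $c''(u)\in L(u)$ avoiding $c((S_1\setminus\{u\})\cup S_3)$; here I would use that in a suitably chosen $c$ at most one colour of $L(u)$ lies on $S_3$ (again by the at-most-one-per-side principle, after possibly recolouring $S_3$, which is independent with a small neighbourhood), so that one colour of $L(u)$ avoids $S_3$, and it avoids $S_1\setminus\{u\}$ as well unless it is $c(v)$ itself — and an alternative not blocking both $S_3$ and $c(T)$ then survives because $c(T)$ is small (indeed monochromatic in the ``confined-to-$T$'' sub-case).

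The main obstacle is precisely the tightness of the size-$3$ lists in these local recolourings: one must keep exact track of which colours of $L(u)$ already occur on $T$, on $S_2$, and on $S_3$. The hardest sub-cases are the degenerate ones in which all three colours of $L(u)$ are forced onto $S_2\cup S_3$ — split as $2+1$ or $3+0$ between the two sides — where one has to choose judiciously which of $S_2,S_3$ plays the role of $S_i$ and which colour of $L(u)$ to place on $u$ so that both the isolation required in (i) and its moreover clause hold simultaneously. I expect the proof to divide into a small number of cases according to the values of $|L(u)\cap c(S_2)|$ and $|L(u)\cap c(S_3)|$, with the structural observation that some colour of $L(u)$ is either rarely used or confined to $T$ as the only genuinely global input, and everything else reduced to recolourings inside $S_2$, $S_3$, and the pair $T$.
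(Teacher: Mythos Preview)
Your plan to colour $G-u$ properly first (using $\chi_\ell(K_{2,3,3})=3$) and then select $c(u)\in L(u)$ is exactly how the paper begins, and your case split on the multiplicity of a minimally used colour does secure the first assertions of (i) and (ii). The gap is in the ``moreover'' clauses. Your proposed recolouring of $S_i\setminus\{w\}$ (respectively of $S_3$) monochromatically with some $\mu\in L(u)$ is only a legitimate $L$-colouring if $\mu$ lies in the list of each of those vertices, and nothing in your setup guarantees this --- the lists of the two vertices of $S_i\setminus\{w\}$ could be disjoint from $L(u)$ altogether. (Incidentally, $T\cup S_{5-i}$ has five vertices and up to five colours, not four.) Without this step you have no mechanism to force $c'(u)$ to equal the colour of \emph{both} vertices of $S_i\setminus\{w\}$, which is what (i) demands.

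The paper avoids any recolouring by splitting differently. Having fixed one proper $L$-colouring of $G-u$, it asks whether some $S_i$ ($i\in\{2,3\}$) carries two colours $\alpha,\beta$ of $L(u)$. If so, one of them, say $\alpha$, occurs exactly once in $S_i$, at some $x$; set $c(u)=\alpha$ for (i). Should $x=w$, then $\beta\in L(u)\cap c(S_i)$ must already occupy \emph{both} vertices of $S_i\setminus\{w\}$ --- for otherwise $\beta$ also occurs once, at a vertex $\ne w$, and re-choosing $c(u)=\beta$ gives $x\ne w$ --- so $c'(u)=\beta$ satisfies the moreover clause automatically. The monochromaticity of $S_i\setminus\{w\}$ in a colour of $L(u)$ is thus a \emph{consequence} of the case hypothesis, not something to be engineered by recolouring. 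If instead each $S_i$ carries at most one colour of $L(u)$, then some $\lambda\in L(u)$ avoids $c(S_2\cup S_3)$; set $c(u)=\lambda$ for (ii), and when $c(u)=c(v)$ for $v\in T$ the remaining two colours of $L(u)$ lie one in $c(S_2)$ and one in $c(S_3)$, so $c''(u)$ can be taken to be the one in $c(S_2)$.
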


\begin{proof}
By Theorem \ref{GMthm}, there exists an $L$-colouring $c$ of $G-u=K_{2,3,3}$ which is proper. Suppose first that there exist $\alpha,\beta\in L(u)$ such that $\alpha,\beta\in\{c(z):z\in S_i\}$, for some $i\in\{2,3\}$. We may assume that $\alpha$ occurs exactly once in $S_i$ at some $x$, and the first part of (i) holds by letting $c(u)=\alpha$. Moreover, if $x=w\in S_i$, then $c(z)=\beta$ for all $z\in S_i\setminus\{w\}$, otherwise we could have let $c(u)=\beta$. Let the $L$-colouring $c'$ of $G$ be such that $c'\equiv c$ on $G-u$, and $c'(u)=\beta$. Then the second part of (i) holds.

Now, suppose that no such $\alpha,\beta\in L(u)$ exist. Then we can choose $c(u)\in L(u)\setminus\{c(z):z\in S_2\cup S_3\}$, and the first part of (ii) holds. Moreover, if $c(u)=c(v)$ where $v\in S_1\setminus\{u\}$, then there exist $\gamma,\delta\in L(u)\setminus\{c(u)\}$ such that $\gamma\in\{c(z):z\in S_2\}$ and $\delta\in\{c(z):z\in S_3\}$, otherwise we could have chosen another colour of $L(u)\setminus\{c(z):z\in S_2\cup S_3\}$ for $c(u)$. Let the $L$-colouring $c''$ of $G$ be such that $c''\equiv c$ on $G-u$, and $c''(u)=\gamma$. Then the second part of (ii) holds.
\end{proof}

Secondly, the determination of $rc^\ell(G_{5,2,2})$ will turn out to be the most difficult. We focus on this case in the following result.

\begin{lemma}\label{G522UB}
$rc^\ell(G_{5,2,2})\le 3$.
\end{lemma}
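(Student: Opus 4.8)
The plan is to exploit that $G_{5,2,2}$ has diameter $2$. Recall its structure: a hub $y$ joined to the independent sets $U=\{u_1,\dots,u_5\}$, $V=\{v_1,v_2\}$, $W=\{w_1,w_2\}$; a triangle $x_1x_2x_3$; with $x_1$ joined to all of $U$, $x_2$ to all of $V$, and $x_3$ to all of $W$. Every pair of vertices at distance $1$ needs nothing, so it suffices, given a $3$-edge-list assignment $L$, to choose an $L$-edge-colouring under which every pair at distance $2$ has a rainbow path of length $2$ or $3$. Write $a_i=c(yu_i)$, $b_i=c(x_1u_i)$, $c_j=c(yv_j)$, $d_j=c(x_2v_j)$, $e_k=c(yw_k)$, $f_k=c(x_3w_k)$, and $p=c(x_1x_2)$, $q=c(x_1x_3)$, $s=c(x_2x_3)$. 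Then the distance-$2$ pairs translate into: (i) the cherry-pairs inside each ``book'' are distinct, i.e.\ $(a_1,b_1),\dots,(a_5,b_5)$ are pairwise distinct, $(c_1,d_1)\ne(c_2,d_2)$, $(e_1,f_1)\ne(e_2,f_2)$ (these handle the pairs inside $U$, inside $V$, inside $W$); (ii) each book has a bichromatic cherry (for $y$--$x_1$, $y$--$x_2$, $y$--$x_3$); (iii) for each cross-pair $u_i$--$v_j$ one has $a_i\ne c_j$ or the triangle detour $u_ix_1x_2v_j$ is rainbow, i.e.\ $b_i,p,d_j$ are distinct, and similarly for $u_i$--$w_k$ and $v_j$--$w_k$; and (iv) for each pair such as $u_i$--$x_2$ one has $b_i\ne p$ or a $y$-detour $u_iyv_jx_2$ works, and symmetrically for a $u,v,w$ vertex with a non-incident $x_\ell$.

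My approach is to colour in stages, the main combinatorial tool being Hall's theorem applied to the $3\times3$ ``cherry grids'' $L(yu_i)\times L(x_1u_i)$. Any subfamily of these grids has a system of distinct representatives (each grid has $9$ cells), and since each grid has at least $6$ off-diagonal cells and, for any prescribed colour $\gamma$, at least $6$ cells whose first coordinate avoids $\gamma$, one can even demand an SDR all of whose entries are off-diagonal, or all of whose first coordinates avoid $\gamma$. First I would colour the two short books on $V$ and $W$: two distinct off-diagonal cherry-pairs in each are immediate, and this leaves plenty of freedom in $c_1,c_2,e_1,e_2$ and in the spokes $d_1,d_2,f_1,f_2$, which I keep in reserve. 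Next comes the $K_{2,5}$ book on $\{y,x_1\}\cup U$, which is the \emph{crux}: I pick the five cherry-pairs $(a_i,b_i)$ to be distinct and off-diagonal, while trying to arrange the multiset $\{a_1,\dots,a_5\}$ so that it is disjoint from $\{c_1,c_2\}$ and from $\{e_1,e_2\}$, which makes every $u$--$v$ and $u$--$w$ pair rainbow-connected through $y$. Finally I colour the triangle, choosing $p,q,s$ in general position relative to the spokes already fixed, so that each needed detour $u_ix_1x_2v_j$, $u_ix_1x_3w_k$, $v_jx_2x_3w_k$ is rainbow and each two-edge path $u_ix_1x_2$, $v_jx_2x_1$, etc.\ works; a few backward corrections to the spokes dispose of the cases where some list happens to lie inside the set of colours one wishes to avoid.

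The main obstacle is the tight regime in which the nine hub-lists largely coincide, the extreme case being $L(yu_i)=L(yv_j)=L(yw_k)=\{1,2,3\}$ for all $i,j,k$. Here, exactly as in the classical fact that $\chi_\ell(K_{p,p^p})=p+1$, the hub-edges cannot be coloured so that $\{a_i\}$, $\{c_j\}$, $\{e_k\}$ are pairwise disjoint colour-sets while no colour repeats more than three times on $U$: distinctness of the five $U$-cherry-pairs forces each $a$-value to occur at most three times, and that is incompatible with squeezing all $a$-, $c$- and $e$-values into three colours with the three sets disjoint. One is therefore forced to let some $a_i$ coincide with the common $V$-colour (or $W$-colour) and to route the relevant cross-pairs through triangle detours; but with only three colours a detour $u_ix_1x_2v_j$ must use at $v_j$ the one colour outside $\{b_i,p\}$, which clashes with the requirement $d_1\ne d_2$ if both $v_1$ and $v_2$ must be reached that way. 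Escaping this requires a coordinated choice of the triangle colour $p$, of the spokes $b_i$ in the offending group of $u_i$'s, and of $d_1,d_2$ (and the symmetric statement for $W$), and I expect this three-colour sub-case to absorb most of the proof, the remaining cases --- where some hub-lists are larger than, or disjoint from, the common triple --- being handled comfortably by the Hall/SDR machinery with colours to spare.
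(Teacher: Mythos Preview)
Your plan correctly isolates the crux --- the regime where the nine hub-lists $L(yu_i),L(yv_j),L(yw_k)$ all coincide --- but stops exactly there, and the sketch of the fallback is both under-specified and resting on a slightly wrong constraint. When you say the detour $u_ix_1x_2v_j$ forces $d_j$ to be the unique colour outside $\{b_i,p\}$ and that this ``clashes with the requirement $d_1\ne d_2$'', you have misread your own condition~(i): what you need is $(c_1,d_1)\ne(c_2,d_2)$, not $d_1\ne d_2$, and since $c_1,c_2$ can differ, having $d_1=d_2$ is often harmless. More seriously, your conditions (i)--(iv) only route through paths of length $\le 3$, whereas $G_{5,2,2}$ has many length-$4$ paths (e.g.\ $y\,v_j\,x_2\,x_1\,x_3$ for $y$--$x_3$, or $u_i\,x_1\,x_2\,x_3\,w_k$ for $u_i$--$w_k$), and these extra routes are precisely what rescue the tight cases. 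Without them your system of constraints is genuinely infeasible in the worst case; with them, you face a substantial case analysis that you have not begun. As written this is an outline with the hard case acknowledged but not done.

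The paper takes a quite different route. Rather than Hall/SDR on the cherry-grids, it identifies two copies of $K_{3,3,3}$ hidden in $G_{5,2,2}$: one whose ``vertices'' are the nine edges $\{u_4x_1,u_5x_1,x_2x_3\}$, $\{v_1x_2,v_2x_2,x_1x_3\}$, $\{w_1x_3,w_2x_3,x_1x_2\}$ around the triangle, and one on the hub-edges at $y$ (partitioned according to the outcome of the first step). It then invokes a separate lemma saying $K_{3,3,3}$ is \emph{almost} $3$-choosable: for any $3$-list assignment there is an $L$-colouring proper on all but one designated vertex $u$, with tightly controlled behaviour at $u$ (either a unique clash, or no clash but possibly a repeat within $u$'s own class, each with a prescribed alternative recolouring). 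Applying this twice yields an initial colouring of twelve edges with structured guarantees --- the triangle $x_1x_2x_3$ rainbow, most spokes avoiding the relevant triangle colours --- after which the three leftover spokes $u_1x_1,u_2x_1,u_3x_1$ are chosen greedily and the residual bad configurations are dispatched, in several places via length-$4$ paths. Your SDR machinery is fine for the generic situation, but it does not supply the ``almost-proper with controlled defect'' output that organises the hard case; that is what the $K_{3,3,3}$ lemma buys.
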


\begin{proof}
Let $L$ be a $3$-edge-list assignment of $G_{5,2,2}$. We construct an $L$-edge-colouring of $G_{5,2,2}$ which is rainbow connected, as follows. In summary, we identify the edges in $\{u_4,u_5\}\cup V\cup W\cup X$ with the vertices of $K_{3,3,3}$, and apply Lemma \ref{K333lm}. We repeat this with the edges incident to $y$. We then modify the two edge-colourings that are obtained, if necessary. Finally, we choose the colours of the edges $u_1x_1,u_2x_1,u_3x_1$.

Let $H=K_{3,3,3}$ with classes $S_1=\{u_4x_1,u_5x_1,x_2x_3\}$, $S_2=\{v_1x_2,v_2x_2,x_1x_3\}$, $S_3=\{w_1x_3,w_2x_3,x_1x_2\}$. We apply Lemma \ref{K333lm} to $H$ with $u=u_4x_1$, and obtain the colours of the edges of $S_1\cup S_2\cup S_3$ under an $L$-edge-colouring $c$. We may assume $c(u)\not\in\{c(z):z\in S_3\}$, otherwise we may switch the roles of $S_2$ and $S_3$. If (i) holds and $x=w=x_1x_3\in S_2$, we obtain the colouring $c'$ as described. Let $f$ be the updated colouring, so that $f\equiv c$ or $c'$ on $S_1\cup S_2\cup S_3$, whichever of $c,c'$ is defined. Let $f(x_1x_2)=\alpha_2$, $f(x_1x_3)=\alpha_3$, $f(x_2x_3)=\alpha_1$, $f(u_4x_1)=\zeta$. Then $\alpha_1,\alpha_2,\alpha_3$ are distinct, and $\alpha_2,\alpha_3,f(w_1x_3),f(w_2x_3)\neq\zeta$. By switching $v_1$ and $v_2$ if necessary, we have the following three possibilities.
\begin{enumerate}
\item[(I)] $f(v_1x_2)=\zeta$, and $\alpha_1,f(u_5x_1),f(v_2x_2)\neq\zeta$.
\item[(II)] $f(v_1x_2)=f(v_2x_2)=\zeta$, and $\alpha_1,f(u_5x_1)\neq\zeta$.
\item[(III)] $f(v_1x_2),f(v_2x_2)\neq\zeta$, and $f(u_5x_1)=\zeta$, $\alpha_1=\zeta$ are possible.
\end{enumerate}

Next, we consider the edges incident to $y$. Let $S_3'=\{u_1,u_2,u_3\}$.\\[1ex]
\emph{Case 1.} $f(v_1x_2)\neq f(v_2x_2)$.\\[1ex]
\indent This case occurs if (I) holds, may occur if (III) holds, and cannot occur if (II) holds. Let $\tilde{S}_1=\{u_4,u_5,w_1\}$ and $\tilde{S}_2=\{v_1,v_2,w_2\}$. Apply Lemma \ref{K333lm} to $\tilde{S}_1,\tilde{S}_2,S_3'$, with $u=u_4$, and:
\begin{enumerate}
\item[(1a)] $w=v_1\in \tilde{S}_2$, if (i) holds with $x\in \tilde{S}_2$, and $f(v_1x_2)=\zeta$;
\item[(1b)] $v=u_5\in \tilde{S}_1$, if (ii) holds and $f(u_5x_1)=\zeta$.
\end{enumerate}
We obtain the colours of the edges at $y$ under an $L$-edge-colouring $\tilde{c}$, where $\tilde{c}(zy)$ is the colour of $z\in\tilde{S}_1\cup \tilde{S}_2\cup S_3'$. If (1a) or (1b) holds, we obtain the colouring $\tilde{c}'$ or $\tilde{c}''$ as described in Lemma \ref{K333lm}. Let $f$ be the updated colouring, so that $f\equiv \tilde{c},\tilde{c}'$ or $\tilde{c}''$ on $\tilde{S}_1\cup \tilde{S}_2\cup S_3'$, whichever of $\tilde{c},\tilde{c}',\tilde{c}''$ is defined. \\[1ex]
\emph{Case 2.} $f(v_1x_2)= f(v_2x_2)=\beta$.\\[1ex] 
\indent This case occurs if (II) holds, may occur if (III) holds, and cannot occur if (I) holds.\\[1ex] 
\emph{Subcase 2.1.} $f(w_1x_3)\neq f(w_2x_3)$.\\[1ex] 
\indent Let $\hat{S}_1=\{u_4,u_5,v_1\}$ and $\hat{S}_2=\{v_2,w_1,w_2\}$. Apply Lemma \ref{K333lm} to $\hat{S}_1,\hat{S}_2,S_3'$, with $u=u_4$, and:
\begin{enumerate}
\item[(2a)] $w=v_2\in \hat{S}_2$, if (i) holds with $x\in \hat{S}_2$, and $f(v_1x_2)=f(v_2x_2)=\beta=\zeta$;
\item[(2b)] $v=u_5\in \hat{S}_1$, if (ii) holds and $f(u_5x_1)=\zeta$;
\item[(2c)] If (ii) holds, $v=v_1\in \hat{S}_1$ and $f(v_1x_2)=f(v_2x_2)=\beta=\zeta$, then we update the colouring $f$ on $S_1\cup S_2\cup S_3$ by reverting back to the colouring $c$. We also update $\zeta$, so that $f(u_4x_1)=f(x_1x_3)=\zeta=\alpha_3$ is different from $f(v_1x_2)=f(v_2x_2)=\beta$.
\end{enumerate}
We similarly obtain the colours of the edges at $y$ under an $L$-edge-colouring $\hat{c}$. If (2a) or (2b)  holds, we obtain the colouring $\hat{c}'$ or $\hat{c}''$ as described in Lemma \ref{K333lm}. Let $f$ be the updated colouring, so that $f\equiv \hat{c},\hat{c}'$ or $\hat{c}''$ on $\hat{S}_1\cup \hat{S}_2\cup \hat{S}_3$, whichever of $\hat{c},\hat{c}',\hat{c}''$ is defined. \\[1ex] 
\emph{Subcase 2.2.} $f(w_1x_3)= f(w_2x_3)=\gamma$.\\[1ex] 
\indent Let $\bar{S}_1=\{u_4,v_1,w_1\}$ and $\bar{S}_2=\{u_5,v_2,w_2\}$. Apply Lemma \ref{K333lm} to $\bar{S}_1,\bar{S}_2,S_3'$, with $u=u_4$, and:
\begin{enumerate}
\item[(2d)] $w=u_5\in \bar{S}_2$, if (i) holds with $x\in \bar{S}_2$, and $f(u_5x_1)=\zeta$;
\item[(2e)] $w=v_2\in \bar{S}_2$, if (i) holds with $x\in \bar{S}_2$, and $f(v_1x_2)=f(v_2x_2)=\beta=\zeta$;
\item[(2f)] If (ii) holds, $v=v_1\in \bar{S}_1$ and $f(v_1x_2)=f(v_2x_2)=\beta=\zeta$, then we will modify the colouring $f$ on $S_1\cup S_2\cup S_3$, with the method to be explained below.
\end{enumerate}
We similarly obtain the colours of the edges at $y$ under an $L$-edge-colouring $\bar{c}$. If (2d) or (2e) holds, we obtain the colouring $\bar{c}'$ as described in Lemma \ref{K333lm}. Let $f$ be the updated colouring, so that $f\equiv \bar{c}$ or $\bar{c}'$ on $\bar{S}_1\cup \bar{S}_2\cup S_3'$, whichever of $\bar{c},\bar{c}'$ is defined. 

Now, suppose that (2f) holds. We update $f$ on $S_1\cup S_2\cup S_3$, preferably to a proper colouring of $H$ whenever possible. Consider the colouring $c$ which gave the colouring $c'$. We have $c(u_4x_1)=c(x_1x_3)=\alpha_3$, and $\alpha_3,\beta\in L(u_4x_1)$. Note that $L(u_4x_1)\subset\{c(z):z\in S_2\cup S_3\}$, otherwise $c(u_4x_1)$ can be updated to give a proper colouring of $H$. Thus we may have $\alpha_2,\gamma\in L(u_4x_1)$, where $\alpha_2=\gamma$ is possible. 

If $\alpha_2\neq\gamma$, then we update $f$ by reverting $f$ back to $c$. Then, note that $u_4x_1x_2x_3w_1$ and  $u_4x_1x_2x_3w_2$ are rainbow $u_4-w_1$ and $u_4-w_2$ paths.

Now, let $\alpha_2=\gamma$. We have $L(u_4x_1)=\{\alpha_2,\alpha_3,\beta\}$. Let $f(u_5x_1)=\delta$, where $\delta\neq \alpha_2,\alpha_3,\beta$ ($\delta=\alpha_1$ is possible). If $L(v_1x_2)$ or $L(v_2x_2)$ contains a colour $\eta$ other than $\alpha_1,\alpha_2,\beta,\delta$ ($\eta=\alpha_3$ is possible), then we may reduce to (I) and thus Case 1 by updating $f(v_1x_2)=\eta$ or $f(v_2x_2)=\eta$, and $f(u_4x_1)=\beta$. 

($\ast$) Suppose first that $\delta=\alpha_1$. If $L(x_1x_3)$ contains a colour $\eta$ other than $\alpha_1,\alpha_2,\alpha_3$ ($\eta=\beta$ is possible), then we have a proper colouring of $H$ if we update $f(x_1x_3)=\eta$ and $f(u_4x_1)=\alpha_3$. Now, we have a proper colouring of $H$ by updating $f(v_1x_2)=f(v_2x_2)=f(x_1x_3)=\alpha_1$, $f(u_5x_1)\not\in L(u_5x_1)\setminus\{\alpha_1,\alpha_2\}$ and $f(x_2x_3)\not\in L(x_2x_3)\setminus\{\alpha_1,\alpha_2\}$. 

Now, suppose $\delta\neq\alpha_1$. ($\#$) If we can update $f(v_1x_2)=\delta$, then we do so, and update $f(u_4x_1)=\beta$. Note that $u_4,v_2$, and $u_5,v_1$, were already connected by the rainbow paths $u_4yv_2$ and  $u_5yv_1$ under $f$. Otherwise, we have $L(v_1x_2)=\{\alpha_1,\alpha_2,\beta\}$. We may return to the beginning of Subcase 2.2, switch the roles of $v_1$ and $v_2$, and apply the same argument, to obtain $L(v_2x_2)=\{\alpha_1,\alpha_2,\beta\}$. Now, update $f(u_4x_1)=\alpha_3$, $f(v_1x_2)=f(v_2x_2)=\alpha_1$, and $f(x_1x_3)=\eta\in L(x_1x_3)\setminus\{\alpha_2,\alpha_3\}$ ($\eta=\alpha_1,\beta$ or $\delta$ are possible). If $\eta=\alpha_1$, update $f(x_2x_3)\in L(x_2x_3)\setminus\{\alpha_1,\alpha_2\}$. If $\eta=\delta$, update $f(u_5x_1)\in L(u_5x_1)\setminus\{\alpha_2,\delta\}$, and note that $f(u_5x_1)\neq\alpha_1$, otherwise we may apply the previous argument ($\ast$). We see that $f$ becomes a proper colouring of $H$.\\[1ex]
\indent Now, for both Cases 1 and 2, let $f$ be the updated $L$-edge-colouring of $G_{5,2,2}-\{u_1x_1,u_2x_1$, $u_3x_1\}$, and update $f(u_4x_1)=\zeta$. When we applied Lemma \ref{K333lm} to the edges incident to $y$ with $u=u_4$, if (i) holds with $w\in S_3'$, we may assume that $w=u_1$. If we do not have $f(u_1y)=f(u_2y)=f(u_3y)$, we assume that $f(u_1y)\neq f(u_2y)$. Then, for some $\ell\in\{2,3\}$, one of the following holds.
\begin{enumerate}
\item[(A)] $f(u_1y)\neq\alpha_\ell$ and $f(u_2y)\neq\alpha_{5-\ell}$.
\item[(B)] $f(u_1y)=f(u_2y)=f(u_3y)=\alpha_{5-\ell}$ and $f(u_4y),f(u_5y)\neq\alpha_{5-\ell}$.
\end{enumerate}
Let $f(u_1x_1)\in L(u_1x_1)\setminus\{\alpha_\ell,\zeta\}$ if $f(u_1y)=f(u_4y)$, and $f(u_1x_1)\in L(u_1x_1)\setminus\{f(u_1y),\alpha_\ell\}$ if $f(u_1y)\neq f(u_4y)$. Let $f(u_2x_1)\in L(u_2x_1)\setminus\{f(u_2y),\alpha_{5-\ell}\}$ if $f(u_1y)\neq f(u_2y)$, and $f(u_2x_1)\in L(u_2x_1)\setminus\{f(u_1x_1),\alpha_{5-\ell}\}$ if $f(u_1y)=f(u_2y)=f(u_3y)$. Let $f(u_3x_1)\in L(u_3x_1)\setminus\{f(u_1x_1),f(u_2x_1)\}$. 

For Case 1, we have the following rainbow $y-x_\ell$ and $y-x_{5-\ell}$ paths: If (A) holds, then we take $yu_1x_1x_\ell$ or $yu_4x_1x_\ell$; and $yu_2x_1x_{5-\ell}$ if $f(u_1y)\neq f(u_2y)$; and $yu_ix_1x_2$ and $yu_hx_1x_3$ for some $i,h\in\{1,2,3\}$ if $f(u_1y)=f(u_2y)=f(u_3y)$. If (B) holds and $f(u_5x_1)=\zeta$, then we take $yu_1x_1x_\ell$; and $yu_4x_1x_{5-\ell}$ or $yu_5x_1x_{5-\ell}$. Now, suppose that (B) holds and $f(u_5x_1)\neq\zeta$. If $f(v_1x_2)\neq\zeta$, we update $f(u_4y)$ so that $f(u_4y)\in L(u_4y)\setminus\{\alpha_{5-\ell},\zeta\}$. We have $yu_1x_1x_\ell$ is a rainbow $y-x_\ell$ path. We also have a rainbow $y-x_{5-\ell}$ path as follows: If $f(v_1x_2)\neq\zeta$, we take $yu_4x_1x_{5-\ell}$. Consider $f(v_1x_2)=\zeta$. If $\ell=3$, we take $yv_1x_2$ if $f(v_1y)\neq \zeta$; and $yu_4x_1x_2$ if $f(v_1y)=\zeta$. If $\ell=2$, we take $yu_4x_1x_3$ if $f(u_4y)\neq\zeta$; and $yv_1x_2x_3$ or $yv_1x_2x_1x_3$ if $f(u_4y)=\zeta$. 

For Case 2, if we have the case ($\#$) of (2f), recall that $f(u_5x_1)=f(v_1x_2)$ and $f(w_1x_3)=f(w_2x_3)$. Then $yu_1x_1x_2$, $yu_5x_1x_2$ or $yv_1x_2$; and $yw_1x_3$ or $yw_2x_3$, are rainbow $y-x_2$ and $y-x_3$ paths. Otherwise, $f(v_1x_2)=f(v_2x_2)$, and $yv_1x_2x_3$, $yv_2x_2x_3$, $yv_1x_2x_1x_3$ or $yv_2x_2x_1x_3$ contains rainbow $y-x_2$ and $y-x_3$ paths. 

($\dag$) For both Cases 1 and 2, it is easy to verify that any other two vertices of $G_{5,2,2}$ are connected by a rainbow path under $f$. For $y$ and $x_1$, we have $yu_ix_1$ is rainbow $y-x_1$ path for some $u_i\in U$. If $z_1,z_2\in U\cup V\cup W$, then there is a rainbow $z_1-z_2$ path, using either $y$ or some of $x_1,x_2,x_3$. For $x_2$ and $u_i\in U$, either $x_2x_1u_i$ or $x_2x_3x_1u_i$ is a rainbow $x_2-u_i$ path. A similar argument applies for any $z_1\in X$ and $z_2\in U\cup V\cup W$ which are non-adjacent. 

We have $f$ is a rainbow connected colouring of $G_{5,2,2}$, and hence $rc^\ell(G_{5,2,2})\le 3$.
\end{proof}

We are now ready to prove Theorem \ref{minthm}.

\begin{proof}[Proof of Theorem \ref{minthm}]
By Theorems \ref{CJMZPetthm} and \ref{Petthm}, we have $rc(\textsf P_{10})=rc^\ell(\textsf P_{10})=3$. It remains to consider the graphs of Figure 3. We shall prove the following.
\begin{equation}\label{minthmeq1}
rc(G_{p,q})=rc^\ell(G_{p,q})=
\left\{
\begin{array}{ll}
3, & \textup{if $2\le t\le 7$ or $(p,q)=(6,2),(5,3),(4,4),(6,3)$},\\[0.2ex]
4, & \textup{if $t\ge 10$ or $(p,q)=(7,1),(8,1),(7,2),(5,4)$}.
\end{array}
\right.
\end{equation}
\indent\\[-0.7cm]
\begin{equation}\label{minthmeq2}
rc(G_{p,q,r})=rc^\ell(G_{p,q,r})=
\left\{
\begin{array}{ll}
3, & \textup{if $3\le t\le 8$ or}\\
& \quad\textup{$(p,q,r)=(7,1,1),(4,4,1),(5,2,2),(3,3,3)$},\\[0.2ex]
4, & \textup{if $t\ge 10$ or}\\
& \quad\textup{$(p,q,r)=(6,2,1),(5,3,1),(4,3,2)$}.
\end{array}
\right.
\end{equation}

We can relate $G_{p,q}$ or $G_{p,q,r}$ to $K_{2,t}$, by contracting the set $X$ to a single vertex. If $G_{p,q}$ or $G_{p,q,r}$ is given an edge-colouring $c$, then we associate the vertices $u_i,v_j,w_k$ with the $2$-vectors $\vec{u}_i,\vec{v}_j,\vec{w}_k$, where $\vec{u}_{i1} = c(u_iy)$, $\vec{u}_{i2} = c(u_ix_1)$, $\vec{v}_{j1} = c(v_jy)$, $\vec{v}_{j2} = c(v_jx_2)$, $\vec{w}_{k1} = c(w_ky)$, and $\vec{w}_{k2} = c(w_kx_3)$.

Since $G_{1,1}=C_5$, we have $rc(G_{1,1})=rc^\ell(G_{1,1})=3$ by Theorems \ref{rccyclethm} and \ref{rclcyclethm}. Now for 
$G_{p,q}$, let $t\ge 3$, so that $p\ge 2$. Suppose there exists a rainbow connected $2$-edge-colouring $c$ of $G_{p,q}$. For $i=1,2$, the unique $u_i-v_1$ path of length $2$ is $u_iyv_1$, and so must be rainbow. Thus, $c(u_1y)=c(u_2y)$. Similarly, considering $x_2,x_1$ in place of $v_1,y$, we have $c(u_1x_1)=c(u_2x_1)$. But then, there is no rainbow $u_1-u_2$ path, a contradiction. Hence, $rc(G_{p,q})\ge 3$. For $G_{p,q,r}$, if we have a $2$-edge-colouring of $G_{p,q,r}$, then two of $u_1y,v_1y,w_1y$ have the same colour, and so two of $u_1,v_1,w_1$ cannot have a rainbow path connecting them. Hence, $rc(G_{p,q,r})\ge 3$. 

Now for $G_{p,q}$, let $t\ge 10$ or $(p,q)=(7,1),(8,1), (7,2), (5,4)$. Suppose that there exists a rainbow connected $3$-edge-colouring $c'$ of $G_{p,q}$, using colours $1,2,3$. If there exist $z_1,z_2\in U\cup V$ such that $\vec{z}_1=\vec{z}_2$, then there cannot exist a rainbow $z_1-z_2$ path. This happens if $t\ge 10$. Otherwise, if $(p,q)=(7,1),(8,1), (7,2), (5,4)$, then the associated vectors of $U\cup V$ must be distinct. For $(p,q)\neq (7,1)$, we have $c'(z_1y)=c'(z_2y)=c'(z_3y)$, for some $z_1,z_2,z_3$ such that $z_1,z_2$ are in one of $U,V$, and $z_3$ is in the other, and there does not exist a rainbow path from $z_3$ to $z_1$ or $z_2$. For $(p,q)=(7,1)$, we may assume $c'(u_iy)=c'(u_\ell x_1)=1$ for $i=1,2,3$ and $\ell=1,4$; $c'(u_iy)=c'(u_\ell x_1)=2$ for $i=4,5,6$ and $\ell=2,5$; and $c'(u_7y)=c'(v_1y)=c'(u_\ell x_1)=3$ for $\ell=3,6$. If $c'(x_1x_2)=\alpha$, then there does not exist a rainbow path from $x_2$ to $u_\alpha$ or $u_{\alpha+3}$. We always have a contradiction, and hence $rc(G_{p,q})\ge 4$. We may argue similarly for $G_{p,q,r}$, where $t\ge 10$ or $(p,q,r)=(6,2,1),(5,3,1),(4,3,2)$. We have $U\cup V\cup W$ in place of $U\cup V$, and for the vertices $z_1,z_2,z_3$, we have $\{z_1,z_2\}$ and $z_3$ are in two of $U,V,W$. We may obtain  $rc(G_{p,q,r})\ge 4$.

Next, let $L$ be a $4$-edge-list assignment of $G_{p,q}$ or $G_{p,q,r}$. We construct an $L$-edge-colouring $f$. For $G_{p,q}$, let $f(e)\in L(e)$ be distinct for $e\in\{u_1y,u_1x_1,v_1y,v_1x_2,x_1x_2\}$, except possibly $f(u_1x_1)=f(v_1x_2)$. Let $f(u_iy)\in L(u_iy)$ and $f(u_ix_1)\in L(u_ix_1)$ such that $f(u_1y),f(u_1x_1),f(u_iy),f(u_ix_1)$ are distinct for $2\le i\le p$. Let $f(v_jy)\in L(v_jy)$ and $f(v_jx_2)\in L(v_jx_2)$ such that $f(v_1y),f(v_1x_2),f(v_jy),f(v_jx_2)$ are distinct for $2\le j\le q$. For $G_{p,q,r}$, consider $H=K_{2,2,2,3}$ with classes $S_1=\{u_1y,x_2x_3\}$, $S_2=\{v_1y,x_1x_3\}$, $S_3=\{w_1y,x_1x_2\}$, $S_4=\{u_1x_1,v_1x_2,w_1x_3\}$, so that $L$ gives a $4$-list assignment of $H$. Since $\chi_\ell(H)=4$ by Theorem \ref{GMthm}, there exists a proper $L$-colouring of $H$. Let $f$ be the resulting $L$-edge-colouring of the edges of $S_1\cup S_2\cup S_3\cup S_4$. Choose the colours $f(u_iy),f(u_ix_1),f(v_jy),f(v_jx_2)$ as before, and let $f(w_ky)\in L(w_ky)$ and $f(w_kx_3)\in L(w_kx_3)$ such that $f(w_1y),f(w_1x_3),f(w_ky),f(w_kx_3)$ are distinct for $2\le k\le r$.
%

We show that $f$ is a rainbow connected colouring of $G_{p,q}$ or $G_{p,q,r}$. For $G_{p,q}$, we have $x_1u_1y$, $x_2v_1y$ are rainbow $x_1-y$, $x_2-y$ paths. For $u_i,u_\ell\in U$, either $u_iyu_\ell$ is a rainbow $u_i-u_\ell$ path; or if $f(u_iy)=f(u_\ell y)$, then $u_ix_1u_1yu_\ell$ is rainbow, since $f(u_1y)\neq f(u_hy)$ for all $2\le h\le p$, so that $u_i,u_\ell\neq u_1$. Similar arguments hold for $v_j,v_\ell\in V$, and for $u_i\in U$, $v_j\in V$. Note that for the latter, if $f(u_iy)=f(v_jy)$, then $u_i\neq u_1$ or $v_j\neq v_1$, so $u_ix_1u_1yv_j$ or $u_iyv_1x_2v_j$ is a rainbow $u_i-v_j$ path. For $x_1$ and $v_j\in V$, either $x_1x_2v_j$ is a rainbow $x_1-v_j$ path; or if $f(x_1x_2)=f(v_jx_2)$, then $x_1x_2v_1yv_j$ is rainbow, since $f(x_1x_2)\neq f(v_1x_2)$, so that $v_j\neq v_1$. A similar argument holds for $x_2$ and $u_i\in U$. For $G_{p,q,r}$, analogous arguments also hold. Hence, $f$ is a rainbow connected colouring, and $rc^\ell(G_{p,q})\le 4$ and $rc^\ell(G_{p,q,r})\le 4$.

Now, let $L'$ be a $3$-edge-list assignment of $G_{p,q}$, where $3\le t\le 7$ or $(p,q) = (6,2),(5,3)$, $(4,4),(6,3)$. We construct an $L'$-edge-colouring $g$. Partition $U\cup V=S_1\cup S_2\cup S_3$ such that for $h=1,2,3$, we have $|S_h|=s_h\in\{1,2,3\}$, as follows. For $(p,q)\neq (6,1),(4,4)$, let $S_1=V$, $S_2=\{u_1,\dots,u_{\lceil p/2\rceil}\}$, $S_3=U\setminus S_2$. For $(p,q)=(6,1)$, let $S_1=\{u_6,v_1\}$, $S_2=\{u_1,u_2,u_3\}$, $S_3=\{u_4,u_5\}$. For $(p,q)=(4,4)$, let $S_1=\{u_1,u_2,u_3\}$, $S_2=\{v_1,v_2,v_3\}$, $S_3=\{u_4,v_4\}$. Then $L'$ gives a $3$-list assignment of $H=K_{s_1,s_2,s_3}$ with classes $S_1,S_2,S_3$, where $z\in S_h$ has the list $L'(zy)$. We apply Theorem \ref{GMthm}, or Lemma \ref{K333lm} for the case $(p,q)=(6,3)$, to obtain a colouring of $H$. This gives the colours $g(zy)\in L'(zy)$ for $z\in U\cup V$ such that $z_1yz_2$ is a rainbow $z_1-z_2$ path whenever $z_1,z_2$ are in different sets from $S_1,S_2,S_3$, except possibly when $(p,q)=(6,3)$ and say $z_1=v_1$, $z_2=u_1$. For $S_h=\{u_i,u_{i+1},\dots\}\subset U$ for some $i$, by the operation ($\ddag$), we mean we choose $g(zx_1)\in L'(zx_1)$ to be distinct over $z\in S_h$, such that $g(u_ix_1)\neq g(u_iy)$. We have the operation ($\ddag'$) if in addition, $s_h\ge 2$ and $g(u_{i+1}x_1)\neq g(u_{i+1}y)$. These two operations are defined analogously for $S_h\subset V$, and also for $S_h\subset W$ when we consider the graphs $G_{p,q,r}$ later.
%
\begin{itemize}
\item For $q=1$, let $g(v_1x_2)\in L'(v_1x_2)\setminus\{g(v_1y),g(u_{\lceil p/2\rceil}y)\}$ and $g(x_1x_2)\in L'(x_1x_2)\setminus\{g(v_1x_2)\}$. Apply ($\ddag$) to $S_2,S_3$ so that $g(u_ix_1)\neq g(x_1x_2)$ for $i\neq \lceil\frac{p}{2}\rceil$. For $(p,q)=(6,1)$, let $g(u_6x_1)\in L'(u_6x_1)\setminus\{g(v_1x_2),g(x_1x_2)\}$.
%
\item For $(p,q)=(4,4)$, we apply ($\ddag'$) to $S_1,S_2$. Let $g(e)\in L'(e)$ be distinct for $e\in\{u_4x_1,v_4x_2,x_1x_2\}$
\item For all other cases, we have $S_1=V$ and $S_2\cup S_3=U$. Apply ($\ddag'$) to $S_1,S_2,S_3$. For $(p,q)=(6,3)$, let $g(u_1x_1)\neq g(v_1x_2)$ and $g(x_1x_2)\in L'(x_1x_2)\setminus\{g(u_1x_1),g(v_1x_2)\}$. For $(p,q)=(6,2)$, let $g(x_1x_2)\in L'(x_1x_2)$ arbitrarily. Otherwise, we have $p\le 5$ and $s_3\le 2$. Let $g(x_1x_2)\in L'(x_1x_2)\setminus\{g(zx_1):z\in S_3\}$.
\end{itemize}
In each case, it is not hard to verify that $g$ is a rainbow connected colouring. For $z_1,z_2\in S_h$, or $z_1=v_1$, $z_2=u_1$ when $(p,q)=(6,3)$, we have a rainbow $z_1-z_2$ path by using at least one of $x_1,x_2$. Also, $yu_1x_1$ and $yv_1x_2$ are rainbow $y-x_1$ and $y-x_2$ paths. For $x_1$ and $v_j\in V$, either $x_1x_2v_j$ or $x_1u_iyv_j$ is rainbow, for some $u_i\in U$. A similar argument holds for $x_2$ and $u_i\in U$. Hence, $g$ is rainbow connected, and $rc^\ell(G_{p,q})\le 3$.

Finally, let $L'$ be a $3$-edge-list assignment of $G_{p,q,r}$, where $3\le t\le 8$ or $(p,q,r) = (7,1,1)$, $(4,4,1),(3,3,3)$. We construct an $L'$-edge-colouring $g$. Let $U\cup V\cup W=S_1\cup S_2\cup S_3$ be a partition where $|S_h|=s_h\in\{1,2,3\}$ for $h=1,2,3$, to be chosen later. Apply Theorem \ref{GMthm}, or Lemma \ref{K333lm} for the case $t=9$, to $H=K_{s_1,s_2,s_3}$ as before. We obtain the colours of the edges at $y$ under an $L'$-edge-colouring $g$. We have $z_1yz_2$ is a rainbow $z_1-z_2$ path whenever $z_1,z_2$ are in different sets from $S_1,S_2,S_3$, with possible exception for one pair $z_1,z_2$ if $t=9$.

For $p\le 3$, let $S_1=U$, $S_2=V$, $S_3=W$. When $(p,q,r)=(3,3,3)$, we may possibly have, say $g(u_1y)=g(v_1y)$. Apply ($\ddag$) to each $S_h$. Let $g(e)\in L'(e)$ be distinct for $e\in\{x_1x_2,x_1x_3,x_2x_3\}$, where for $(p,q,r)=(3,3,3)$, we let $g(u_1x_1),g(v_1x_2),g(x_1x_2)$ be distinct.

Now, let $p\ge 4$. For $(p,q,r)\neq (5,2,1),(4,2,2)$, consider $H'=K_{2,2,2}$ with classes $S_1'=\{u_px_1,x_2x_3\}$, $S_2'=\{v_qx_2,x_1x_3\}$, $S_3'=\{w_rx_3,x_1x_2\}$. Since $\chi_\ell(H')=3$ by Theorem \ref{ERTthm}(b), we may obtain colours for the edges of $S_1'\cup S_2'\cup S_3'$ under an $L'$-edge-colouring $g$. Let $g(x_1x_2)=\alpha_2$, $g(x_1x_3)=\alpha_3$, $g(x_2x_3)=\alpha_1$, $g(u_px_1)=\beta_1$, $g(v_qx_2)=\beta_2$, $g(w_rx_3)=\beta_3$.

For $(p,1,1)$ where $4\le p\le 7$, let $S_1=\{u_p,v_1,w_1\}$, $S_2=\{u_1,\dots,u_m\}$, $S_3=U\setminus(S_1\cup\{u_p\})$, where $m=\lceil\frac{p-1}{2}\rceil$. When $(p,q,r)=(7,1,1)$, we may possibly have, say $g(u_2y)=g(u_7y)$. For some $\ell\in\{2,3\}$, we have $g(u_1y)\neq\alpha_\ell$ and $g(u_{m+1}y)\neq \alpha_{5-\ell}$. Apply ($\ddag$) on $S_2,S_3$ with $g(u_1x_1)\neq \alpha_\ell$, $g(u_{m+1}x_1)\neq \alpha_{5-\ell}$; and in addition $g(u_2x_1)\neq g(u_7x_1)$ for $(p,q,r)=(7,1,1)$. Then, $yu_1x_1x_\ell$ and $yu_{m+1}x_1x_{5-\ell}$ are rainbow $y-x_\ell$ and $y-x_{5-\ell}$ paths.

For $(4,q,1)$ where $2\le q\le 4$, let $S_1=\{u_4,v_q,w_1\}$, $S_2=\{u_1,u_2,u_3\}$, $S_3=\{v_1,\dots,v_{q-1}\}$. When $(p,q,r)=(4,4,1)$, we may possibly have, say $g(u_2y)=g(w_1y)$. Consider first when we have a proper colouring of $H$. Apply ($\ddag$) on $S_2,S_3$ with $g(u_1x_1)\neq\alpha_3$, $g(v_1x_2)\neq \alpha_1$. If either $g(u_1y)\neq\alpha_3$ or $g(v_1y)\neq\alpha_1$, then $yu_1x_1x_3$ or $yv_1x_2x_3$ is a rainbow $y-x_3$ path. Otherwise, $g(u_1y)=\alpha_3$ and $g(v_1y)=\alpha_1$. If none of $yu_4x_1x_3$, $yv_qx_2x_3$, $yw_1x_3$ is rainbow, then $g(u_4y)=\beta_1$, $g(v_qy)=\beta_2$, $g(w_1y)=\beta_3$, which are distinct. Now, $z_1yz_2$ is a rainbow $z_1-z_2$ path for $z_1,z_2\in S_1$. Update $g(u_4x_1)\in L'(u_4x_1)\setminus\{\alpha_3,\beta_1\}$, so that $yu_4x_1x_3$ is rainbow. Secondly, if $(p,q,r)=(4,4,1)$ and $g(u_2y)=g(w_1y)$, apply ($\ddag$) to $S_2,S_3$ with $g(u_2x_1)\neq\alpha_3,\beta_3$. Then $yw_1x_3$ or $yu_2x_1x_3$ is a rainbow $y-x_3$ path.

Now, let $(p,q,r)=(5,2,1),(4,2,2)$. Consider $H''=K_{2,3,2}$ with classes $S_1''=\{u_px_1,x_2x_3\}$, $S_2''=\{v_1x_2,v_2x_2,x_1x_3\}$, $S_3''=\{w_1x_3,x_1x_2\}$. Since $\chi_\ell(H'')=3$ by Theorem \ref{GMthm}, we may obtain colours for the edges of $S_1''\cup S_2''\cup S_3''$ under an $L'$-edge-colouring $g$. Let $g(x_1x_2)=\alpha_2$, $g(x_1x_3)=\alpha_3$, $g(x_2x_3)=\alpha_1$, $g(u_px_1)=\beta_1$, $g(v_1x_2)=\beta_2$, $g(v_2x_2)=\beta_2'$, $g(w_1x_3)=\beta_3$. Let $S_1=\{u_1,u_2,u_3\}$.

Suppose first that $\beta_2=\beta_2'$. Let $S_2 = \{u_4, v_1\}$, $S_3 = \{u_5, v_2, w_1\}$ if $(p,q,r)=(5,2,1)$; and $S_2=\{u_4,v_1,w_1\}$, $S_3=\{v_2,w_2\}$ if $(p,q,r)=(4,2,2)$. Apply ($\ddag$) to $S_1$ with $g(u_1x_1)\neq\alpha_3$. Let $g(u_4x_1)\in L'(u_4x_1)\setminus\{\alpha_2,\beta_2\}$ if $(p,q,r)=(5,2,1)$; and $g(w_2x_3)\in L'(w_2x_3)\setminus\{\alpha_1,\beta_2\}$ if $(p,q,r)=(4,2,2)$. Then $yv_1x_2$ or $yv_2x_2$ is a rainbow $y-x_2$ path. Also, one of $yu_1x_1x_3$, $yv_1x_2x_3$, $yv_2x_2x_3$ is a rainbow $y-x_3$ path, unless $g(u_1y)=\alpha_3$ and $\{g(v_1y),g(v_2y)\}=\{\alpha_1,\beta_2\}$, in which case we take $yv_1x_2x_1x_3$ or $yv_2x_2x_1x_3$.

Now, let $\beta_2\neq\beta_2'$. For $(p,q,r)=(5,2,1)$, let $S_2=\{u_4,u_5\}$, $S_3=\{v_1,v_2,w_1\}$. We have $g(u_1y)\neq\alpha_\ell$ and $g(u_4y)\neq\alpha_{5-\ell}$ for some $\ell\in\{2,3\}$. Apply ($\ddag$) on $S_1$ with $g(u_1x_1)\neq\alpha_\ell$. Let $g(u_4x_1)\in L'(u_4x_1)\setminus\{\alpha_{5-\ell},\beta_1\}$ if $g(u_4y)=g(u_5y)$, and $g(u_4x_1)\in L'(u_4x_1)\setminus\{\alpha_{5-\ell},g(u_4y)\}$ if $g(u_4y)\neq g(u_5y)$. Then $yu_1x_1x_\ell$, and $yu_4x_1x_{5-\ell}$ or $yu_5x_1x_{5-\ell}$, are rainbow $y-x_\ell$ and $y-x_{5-\ell}$ paths. For $(p,q,r)=(4,2,2)$, let $S_2=\{u_4,v_1,v_2\}$, $S_3=\{w_1,w_2\}$. Apply ($\ddag$) to $S_1$. Let $g(w_2x_3)\in L'(w_2x_3)\setminus\{\beta_3,g(w_2y)\}$, so that $yw_2x_3$ is a rainbow $y-x_3$ path. We have $yv_1x_2$ or $yv_2x_2$ is a rainbow $y-x_2$ path, unless $g(v_1y)=\beta_2$ and $g(v_2y)=\beta_2'$, in which case, we may assume $g(u_4y)\neq \beta_2$, update $g(v_1x_2)\in L'(v_1x_2)\setminus\{\beta_2\}$, and take $yv_1x_2$.

In every case, we can easily check that any remaining pair of vertices of $G_{p,q,r}$ are connected by a rainbow path under $g$, just as in ($\dag$) in Lemma \ref{G522UB}. Hence, $rc^\ell(G_{p,q,r})\le 3$.

Finally, since $rc(G)\le rc^\ell(G)$ for any connected graph $G$ by (\ref{ineqs}), and $rc^\ell(G_{5,2,2})\le 3$ by Lemma \ref{G522UB}, the results (\ref{minthmeq1}) and (\ref{minthmeq2}) follow. This completes the proof of Theorem \ref{minthm}.
\end{proof}


The graphs of the family $\mathcal G$ seem to be plausible candidates to consider, when we wish to find a graph $G$ such that $rc(G)<rc^\ell(G)$ with $rc(G)$ small. But we have shown in Theorem \ref{minthm} that $rc(G)=rc^\ell(G)$ for all $G\in\mathcal G$. 
Perhaps it is true that $rc(G)=rc^\ell(G)$ for all connected graphs $G$, or that $rc(G)$ and $rc^\ell(G)$ cannot be far apart.

\section{Conclusion}

In this paper, we have introduced and studied the list rainbow connection number $rc^\ell(G)$ and the list strong rainbow connection number $src^\ell(G)$ of a connected graph $G$. These two parameters are significantly more difficult to study than the rainbow connection number $rc(G)$ and strong rainbow connection number $src(G)$. We proposed the problem of whether or not we always have $rc(G)=rc^\ell(G)$ for all connected graphs $G$. It is hoped that the parameters $rc^\ell(G)$ and $src^\ell(G)$ can be further studied in the future, which will lead to new contributions to the topic of list colourings of graphs.

\section*{Acknowledgements}
Henry\,\, Liu\,\, is\,\, partially\,\, supported\,\, by\,\, National\,\, Natural\,\, Science\,\, Foundation\,\, of\,\, China\,\, (No.~11931002).\, Rongxia\,\, Tang,\,\, Henry\,\, Liu,\,\, Yueping\,\, Shi,\,\, and\,\, Chenming\,\, Wang\,\, are partially\, supported\, by\, National\, Key\, Research\, and\, Development\, Program\, of\, China\, (No. 2020YFA0712500).

\end{document}